\newcommand\gap{\hspace{0.1cm}}
\newcommand\tx{\tilde{x}}
\newcommand\bepsilon{\bar{\epsilon}}
\newcommand\bdelta{\bar{\delta}}
\let\div\relax
\DeclareMathOperator{\div}{div}
\DeclareMathOperator*{\argmin}{\arg\min}
\DeclareMathOperator{\dom}{dom}
\numberwithin{equation}{section}
\theoremstyle{thmstyleone}
\newtheorem{theorem}{Theorem}[section]
\newtheorem{proposition}[theorem]{Proposition}
\newtheorem{lemma}[theorem]{Lemma}
\newtheorem{claim}[theorem]{Claim}
\theoremstyle{thmstyletwo}
\newtheorem{remark}[theorem]{Remark}
\theoremstyle{thmstylethree}
\newtheorem{definition}[theorem]{Definition}
\newtheorem{assumption}[theorem]{Assumption}
\begin{document}

\title[Fast Gradient Methods for Uniformly Convex]{Fast Gradient Methods for Uniformly Convex and Weakly Smooth Problems}

\author*{\fnm{Jongho} \sur{Park}}\email{jongho.park@kaist.ac.kr}

\affil{\orgdiv{Natural Science Research Institute}, \orgname{KAIST}, \orgaddress{\city{Daejeon}, \postcode{34141},\country{Korea}}}


\abstract{In this paper, acceleration of gradient methods for convex optimization problems with weak levels of convexity and smoothness is considered.
Starting from the universal fast gradient method which was designed to be an optimal method for weakly smooth problems whose gradients are H\"{o}lder continuous, its momentum is modified appropriately so that it can also accommodate uniformly convex and weakly smooth problems.
Different from the existing works, fast gradient methods proposed in this paper do not use the restarting technique but use momentums that are suitably designed to reflect both the uniform convexity and weak smoothness information of the target energy function.
Both theoretical and numerical results that support the superiority of the proposed methods are presented.}

\keywords{gradient methods, uniform convexity, weak smoothness, momentum acceleration, convex optimization}

\pacs[MSC Classification]{90C25, 68Q25, 65K05, 65B99}

\maketitle

\section{Introduction}
\label{Sec:Introduction}
We consider first-order methods for convex optimization problems of the form
\begin{equation}
\label{model0}
\min_{x \in X} F(x),
\end{equation}
where $X$ is an Euclidean space equipped with a norm $\| \cdot \| = \left< \cdot , \cdot \right>^{\frac{1}{2}}$, and $F \colon X \rightarrow \overline{\mathbb{R}}$ is a convex function on $X$.
We further assume that $F$ is coercive, so that~\eqref{model0} admits a solution~\cite[Proposition~11.14]{BC:2011}.
It is well-known that the worst-case convergence rate of a first-order method for~\eqref{model0} highly depends on the levels of smoothness and convexity of the objective function $F$; see, e.g.,~\cite{NN:1985,Park:2020,RD:2020}.
For the sake of clarity, we first present the definitions of uniform convexity and weak smoothness of a function.

\begin{definition}
\label{Def:convex}
Let $K$ be a compact convex subset of $X$.
A differentiable function $h \colon X \rightarrow \mathbb{R}$ is $(p, \mu)$-uniformly convex on $K$ if there exists two constants $p \geq 1$ and $\mu > 0$ such that
\begin{equation*}
h(x) \geq h(y) + \left< \nabla h(y), x-y \right> + \frac{\mu}{p} \| x-y \|^p, \quad x,y \in K.
\end{equation*}
\end{definition}

\begin{definition}
\label{Def:smooth}
Let $K$ be a compact convex subset of $X$.
A differentiable function $h \colon X \rightarrow \mathbb{R}$ is $(q, L)$-weakly smooth on $K$ if there exists two constants $q \geq 1$ and $L > 0$ such that
\begin{equation*}
h(x) \leq h(y) + \left< \nabla h(y), x-y \right> + \frac{L}{q} \| x-y\|^q, \quad x,y \in K.
\end{equation*}
\end{definition}

Definition~\ref{Def:convex} reduces to the notion of strong convexity if $p = 2$.
A remarkable property related to Definition~\ref{Def:convex} is the H\"{o}lderian error bound~\cite{RD:2020}; if $K$ contains the solution set $X^*$ of~\eqref{model0} and $h$ is $(p, \mu)$-uniformly convex on $K$, then it follows that
\begin{equation}
\label{sharp}
h(x) \geq h(x^*) + \frac{\mu}{p} \|x - x^* \|^p, \quad x \in K,\gap x^* \in X^*.
\end{equation} 
Inequalities of the type~\eqref{sharp} appear in a very broad class of functions; see, e.g.,~\cite{BDL:2007,XY:2013}.
On the other hand, Definition~\ref{Def:smooth} is a necessary condition for the $(q-1)$-H\"{o}lder continuity of the gradient of a function~\cite{Nesterov:2015}, i.e.,
\begin{equation}
\label{Holder}
\| \nabla h(x) - \nabla h(y) \| \leq L \| x - y \|^{q-1}, \quad x,y \in K.
\end{equation}  
If $q=2$,~\eqref{Holder} reduces to the Lipschitz continuity of the gradient, which is a fundamental assumption in design of first-order methods for convex optimization; see, e.g.,~\cite{CP:2016,Nesterov:2018}.
We also note that the notions introduced in Definitions~\ref{Def:convex} and~\ref{Def:smooth} are typical in the theory of Banach spaces~\cite{AP:1995,XR:1991}.

An important example satisfying Definitions~\ref{Def:convex} and~\ref{Def:smooth} from structural mechanics is the $s$-Laplacian problem~\cite{Ciarlet:2002,Park:2020}.
For $s \geq 1$, a solution of the $s$-Laplacian equation
\begin{align*}
- \div \left( \lvert \nabla u \rvert^{s-2} \nabla u \right) &= b \quad \textrm{ in } \Omega, \\
u&=0 \quad \textrm{ on } \partial \Omega
\end{align*}
is characterized by a unique minimizer of the convex optimization problem
\begin{equation}
\label{sLap}
\min_{u \in W_0^{1,s} (\Omega)} \left\{ F(u) := \frac{1}{s} \int_{\Omega} \lvert \nabla u \rvert^s \,dx - \int_{\Omega} bu \,dx \right\},
\end{equation}
where $\Omega$ is a polygonal domain in $\mathbb{R}^d$, $b \in L^{\frac{s}{s-1}}(\Omega)$, and the solution space $W_0^{1,s}(\Omega)$ is the collection of all functions in $L^s (\Omega)$ with vanishing trace on $\partial \Omega$ such that $\nabla u \in \left(L^s (\Omega) \right)^d$.
It is well-known that the energy function $F$ of~\eqref{sLap} satisfies Definitions~\ref{Def:convex} and~\ref{Def:smooth} with $p= \max \left\{2, s \right\}$ and $q = \min \left\{2, s \right\}$, respectively~\cite{Ciarlet:2002,Park:2020}.
Due to the importance of the $s$-Laplacian in mathematical modeling, there has been extensive research on fast solvers for $s$-Laplacian problems~\cite{BI:2000,FSWW:2017,HLL:2007,Park:2020,ZF:2013}.

This paper is devoted to design of acceleration schemes for gradient methods for the general convex optimization problem~\eqref{model0} satisfying Definitions~\ref{Def:convex} and~\ref{Def:smooth}.
We assume that the objective function $F$ of~\eqref{model0} can be decomposed into the sum of two convex functions as follows:
\begin{equation}
\label{model}
\min_{x \in X} \left\{ F(x) := f(x) + g(x) \right\},
\end{equation}
where $f \colon X \rightarrow \mathbb{R}$ is a differentiable convex function, and $g \colon X \rightarrow \overline{\mathbb{R}}$ is a convex lower semicontinuous function which is possibly nonsmooth.
We further assume that~\eqref{model} admits a solution $x^* \in \dom F$.

Starting from the celebrated work of Nesterov~\cite{Nesterov:1983}, designing acceleration schemes for first-order methods for convex optimization problems has been one of the most promising topics in mathematical optimization.
In~\cite{Nesterov:2005,Nesterov:1983}, Nesterov introduced the notion of momentum acceleration in order to obtain fast gradient methods for smooth convex optimization, i.e., the case when $f$ satisfies Definition~\ref{Def:smooth} with $q=2$ and $g =0$ in~\eqref{model}.
Momentum acceleration was successfully applied to the general nonsmooth case~($g \neq 0$) in~\cite{BT:2009,Nesterov:2013}.
Based on the notion of inexact oracle~\cite{DGN:2013,DGN:2014}, the universal fast gradient method was proposed in~\cite{Nesterov:2015} in order to deal with weakly smooth problems; the function $f$ satisfies Definition~\ref{Def:smooth} with $q \leq 2$.
In~\cite{IN:2014}, first-order methods for uniformly convex objectives satisfying Definition~\ref{Def:convex} with $p \geq 1$ were considered.
Meanwhile, there has been proposed a methodology called the performance estimation problem approach~\cite{DT:2014} that optimizes momentum acceleration.
Using the performance estimation problem approach, optimized gradient methods that achieve smaller convergence bounds than Nesterov's methods were proposed~\cite{KF:2016,KF:2018}.

If the energy function $F$ of~\eqref{model} is uniformly convex, i.e., it satisfies Definition~\ref{Def:convex} for some $p$, then improved convergence bounds can be obtained for first-order methods in general~\cite{Park:2020,RD:2020}.
In this case, in order to accelerate gradient methods for~\eqref{model} properly, the momentum must be designed elaborately so that it reflects the uniform convexity information of the energy function. 
For the strongly convex case, i.e., when $p=2$, there have been proposed several remarkable fast gradient methods that use appropriate momentums~\cite{CC:2019,CP:2016,Nesterov:2013}.
An alternative approach to momentum acceleration to deal with uniformly convex objectives is the restarting technique; if fast gradient methods designed for nonuniformly convex problems are restarted at some iterations, then they can achieve optimal convergence bounds for uniformly convex problems~\cite{NN:1985}.
Restarting techniques for strongly convex problems were considered in~\cite{Nesterov:2013}.
In~\cite{OC:2015}, heuristic restarting techniques that do not require a prior spectral information of problems were proposed.
A restarting technique called scheduled restarts was proposed in~\cite{RD:2020} and it showed optimal convergence properties for convex optimization problems with general $p$ and $q$.
Very recently, another novel restarting scheme that is adaptive to the levels of convexity and smoothness was considered in~\cite{RG:2021}.
Meanwhile, there are several recent results on fast gradient methods for general $p$ and $q$ that utilize momentums~\cite{NGGD:2020,Stonyakin:2021}.
In~\cite{NGGD:2020}, a universal fast gradient method for strongly convex problems without restarts was proposed.
In~\cite{Stonyakin:2021}, a universal fast gradient method based on a framework of inexact model was considered and it can be applied to any levels of convexity and smoothness. 
However, to the best of our knowledge, there have been no existing works on accelerated gradient methods without restarts enjoying optimal convergence rates with respect to $p$ and $q$~\cite{DGN:2013,RD:2020}.

In this paper, we focus on how to design momentums for gradient methods that suitably reflect the uniform convexity and the weak smoothness information of the energy function.
Our goal is motivated by~\cite[Figure~3]{OC:2015}; it presents that, for the strongly convex case, a fast gradient method that uses an optimal momentum converges faster than methods using the restarting technique.
Hoping that this phenomenon may be generalized to the general uniformly convex case, we construct novel momentums that are suitable for general $p$ and $q$.
The starting point is the universal fast gradient method~\cite{Nesterov:2015} that shows an optimal convergence rate for weakly smooth problems with general $q$.
Proceeding similarly to~\cite{Nesterov:2013}, we generalize the universal fast gradient method so that it becomes also suitable for strongly convex problems.
Then, by approximating uniform convexity of the energy function to strong convexity with some tolerance~(cf.~\cite[Theorem~3]{DGN:2013}), we obtain fast gradient methods that perform well for convex optimization problems with general $p$ and $q$.
Numerical results show that proposed methods has faster convergence rates than state-of-the-art methods such as the universal scheduled restarts~\cite{RD:2020}.
We also provide theoretical results that proposed methods are optimal up to a logarithmic factor~\cite{DGN:2013,RD:2020}.

The rest of this paper is organized as follows.
We briefly review several existing approaches~\cite{Nesterov:2015,RD:2020} to convex optimization under the uniform convexity and weak smoothness assumptions in Section~\ref{Sec:UGM}.
Fast gradient methods for weakly smooth and strongly convex problems are introduced in Section~\ref{Sec:Strong}, and then they are extended to more general uniformly convex problems in Section~\ref{Sec:Uniform}.
Numerical results for fast gradient methods introduced in this paper are presented in Section~\ref{Sec:Numerical}.
We conclude the paper with remarks in Section~\ref{Sec:Conclusion}.

\section{Universal fast gradient method}
\label{Sec:UGM}
In this section, we review the universal fast gradient method proposed by Nesterov~\cite{Nesterov:2015}.
Then we summarize the scheduled restarting technique~\cite{RD:2020} for the universal fast gradient method.
Throughout this paper, we take $x_0 \in \dom F$ as an initial guess for algorithms for solving the model problem~\eqref{model}.
A subset $K_0$ of $X$ is defined in terms of $x_0$ as follows:
\begin{equation}
\label{K0}
K_0 = \left\{ x \in X : F(x) \leq F(x_0) \right\}.
\end{equation}
We clearly have $K_0 \subset \dom F$.
Moreover, $K_0$ is compact since $F$ is coercive and lower semicontinuous.
The universal fast gradient method is a first-order method to solve~\eqref{model} under the following assumption.

\begin{assumption}
\label{Ass:smooth}
In~\eqref{model}, the function $f$ is $(q,L)$-weakly smooth on $K_0$ for some $1 \leq q \leq 2$ and $L > 0$, where $K_0$ was defined in~\eqref{K0}.
\end{assumption}

One may refer to~\cite{Nesterov:2015} for various examples satisfying Assumption~\ref{Ass:smooth}.
In what follows, we write
\begin{equation*}
\ell_F (x;y) = f(y) + \left< \nabla f(y), x - y \right> + g(x),
\end{equation*}
for $x,y \in \dom F$, i.e., $\ell_F(\cdot; y)$ is a partial linearization of $F$ at $y$.
The universal fast gradient method equipped with the monotonicity-enforcing technique proposed in~\cite{RD:2020} is presented in Algorithm~\ref{Alg:original}.
Enforcing monotonicity of Algorithm~\ref{Alg:original} is for the sake of ensuring that the energy sequence generated by the algorithm is contained in $K_0$.

\begin{algorithm}[]
\caption{Universal fast gradient method for~\eqref{model} under Assumption~\ref{Ass:smooth}}
\begin{algorithmic}[]
\State Choose $x_0 \in X$, $L_0 > 0$, and $\epsilon > 0$.
\State Let $A_0 = 0$ and $\phi_0 (x) = \frac{1}{2} \| x - x_0 \|^2$.

\For{$n=0,1,2,\dots$}
\State \begin{equation*}
\hat{L}_{n+1} \leftarrow L_n / 2, \quad
v_n = \argmin_{x \in X} \phi_n (x)
\end{equation*}

\Repeat
\State Find $a_{n+1} \geq 0$ such that $\displaystyle \frac{a_{n+1}^2}{A_n + a_{n+1}} = \frac{1}{\hat{L}_{n+1}}$.
\State \begin{equation} 
\label{common_subroutine}
\begin{split}
\theta_n &= \frac{a_{n+1}}{A_n + a_{n+1}} \\
y_n &= (1 - \theta_n) x_n + \theta_n v_n \\
z_n &= \argmin_{x \in X} \left\{\ell_F ( x; y_n) + \frac{\theta_n \hat{L}_{n+1}}{2} \| x - v_n \|^2 \right\} \\
\tx_{n+1} &= (1-\theta_n) x_{n} + \theta_n z_n
\end{split}\end{equation}

\If {$\displaystyle F(\tx_{n+1}) > \ell_F(\tx_{n+1}; y_n) + \frac{\hat{L}_{n+1}}{2} \| \tx_{n+1} - y_n \|^2 + \frac{\theta_n \epsilon}{2}$}
\State \begin{equation*}
\hat{L}_{n+1} \leftarrow 2\hat{L}_{n+1}
\end{equation*}
\EndIf
\Until {$\displaystyle F(\tx_{n+1}) \leq \ell_F(\tx_{n+1}; y_n) + \frac{\hat{L}_{n+1}}{2} \| \tx_{n+1} - y_n \|^2 + \frac{\theta_n\epsilon}{2}$}
\State Pick $x_{n+1}$ such that $\displaystyle F(x_{n+1}) \leq \min \left\{ F(\tx_{n+1}), F(x_n) \right\}$.
\begin{equation*}
L_{n+1} = \hat{L}_{n+1}, \quad 
A_{n+1} = A_n + a_{n+1}, \quad
\phi_{n+1}(x) = \phi_{n}(x) + a_{n+1} \ell_F (x; y_n)
\end{equation*} 
\EndFor
\end{algorithmic}
\label{Alg:original}
\end{algorithm}

Inputs $L_0$ and $\epsilon$ of Algorithm~\ref{Alg:original} play roles of estimates for the smoothness parameter $L$ and target accuracy, respectively.
In addition, $\epsilon$ plays a role of inexactness of oracles~\cite{DGN:2013,DGN:2014}.
A remarkable property of Algorithm~\ref{Alg:original} is that it does not require a priori information on the exact values of $q$ and $L$; this is why the method is said to be ``universal''~\cite{Nesterov:2015}.
The following convergence theorem is available~\cite[Theorem~3]{Nesterov:2015}.

\begin{proposition}
\label{Prop:original}
In Algorithm~\ref{Alg:original}, we have
\begin{equation*}
F(x_n ) - F(x^* ) \leq  \frac{\| x_0 - x^* \|^2}{2A_n} + \frac{\epsilon}{2},
\quad n \geq 1,
\end{equation*}
where
\begin{equation*}
A_n \geq \frac{\epsilon^{\frac{2-q}{q}} n^{\frac{3q-2}{q}}}{2^{\frac{4q-2}{q}} L^{\frac{2}{q}}} ,
\quad n \geq 1.
\end{equation*}
In addition, we need at most
\begin{equation*}
\frac{2^{\frac{4q-2}{3q-2}} L^{\frac{2}{3q-2}} \| x_0 - x^* \|^{\frac{2q}{3q-2}}}{\epsilon^{\frac{2}{3q-2}}}
\end{equation*}
iterations to obtain an $\epsilon$-solution of~\eqref{model}.
\end{proposition}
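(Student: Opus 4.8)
The plan is to establish the three assertions in order: the per-iteration bound by an estimate-sequence (Lyapunov) induction, the growth rate of $A_n$ by first bounding the accepted local constant $\hat{L}_{n+1}$ and then solving a nonlinear difference inequality, and the complexity count as a direct algebraic corollary of the first two. For the per-iteration bound I would prove by induction on $n$ the invariant
\[
A_n F(x_n) \le \phi_n^* + \frac{\epsilon}{2} A_n, \qquad \phi_n^* := \min_{x \in X} \phi_n(x),
\]
the base case $n=0$ being trivial since $A_0 = 0$ and $\phi_0^* = 0$. The inductive step rests on the observation that $\phi_n$ is $1$-strongly convex (it is $\tfrac12 \| \cdot - x_0 \|^2$ plus convex terms), so $\phi_n(x) \ge \phi_n^* + \tfrac12 \| x - v_n \|^2$; combined with the defining identities $A_{n+1} = a_{n+1}^2 \hat{L}_{n+1}$ and $\theta_n \hat{L}_{n+1} = a_{n+1}^{-1}$, this identifies $z_n$ as the minimizer of $a_{n+1} \ell_F(\cdot; y_n) + \tfrac12 \| \cdot - v_n \|^2$ and yields $\phi_{n+1}^* \ge \phi_n^* + a_{n+1} \ell_F(z_n; y_n) + \tfrac12 \| z_n - v_n \|^2$. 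On the other side, using $\tx_{n+1} - y_n = \theta_n(z_n - v_n)$, the convexity splitting $\ell_F(\tx_{n+1}; y_n) \le (1-\theta_n) \ell_F(x_n; y_n) + \theta_n \ell_F(z_n; y_n) \le (1-\theta_n) F(x_n) + \theta_n \ell_F(z_n; y_n)$, and the line-search exit inequality, I would derive $A_{n+1} F(\tx_{n+1}) \le A_n F(x_n) + a_{n+1} \ell_F(z_n; y_n) + \tfrac12 \| z_n - v_n \|^2 + \tfrac{\epsilon a_{n+1}}{2}$. Matching the two chains and invoking $F(x_{n+1}) \le F(\tx_{n+1})$ closes the induction, and the estimate $\phi_n^* \le \phi_n(x^*) \le \tfrac12 \| x_0 - x^* \|^2 + A_n F(x^*)$ then produces the stated bound.

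For the growth of $A_n$, the first step is to control the accepted step size. Weak smoothness gives $F(\tx_{n+1}) \le \ell_F(\tx_{n+1}; y_n) + \tfrac{L}{q} \| \tx_{n+1} - y_n \|^q$, and a Young-type inequality shows $\tfrac{L}{q} t^q \le \tfrac{M}{2} t^2 + \tfrac{\delta}{2}$ for all $t \ge 0$ whenever $M \ge L^{2/q} \delta^{-(2-q)/q}$; taking $\delta = \theta_n \epsilon$ shows the line search terminates with $\hat{L}_{n+1} \le 2 L^{2/q} (\theta_n \epsilon)^{-(2-q)/q}$. Substituting $\theta_n = 1/(a_{n+1} \hat{L}_{n+1})$ and $\hat{L}_{n+1} = A_{n+1}/a_{n+1}^2$ and simplifying the exponents converts this into the nonlinear recursion
\[
A_{n+1} - A_n = a_{n+1} \ge \left( \frac{\epsilon^{(2-q)/q}}{2 L^{2/q}} \right)^{q/(3q-2)} A_{n+1}^{\, 2(q-1)/(3q-2)}.
\]
Since the exponent $2(q-1)/(3q-2)$ lies in $[0, \tfrac12]$ for $1 \le q \le 2$, applying the mean value theorem to the concave map $A \mapsto A^{q/(3q-2)}$ telescopes the recursion into $A_n^{q/(3q-2)} \ge \mathrm{const} \cdot n$, which rearranges to the claimed lower bound on $A_n$. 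Finally, the iteration count follows by requiring $\tfrac{\| x_0 - x^* \|^2}{2 A_n} \le \tfrac{\epsilon}{2}$ and inserting the lower bound just obtained; the exponents collapse, using $1 + (2-q)/q = 2/q$, to exactly the stated expression.

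I expect the main obstacle to be the step-size bound underlying the second part. Inside the \textbf{repeat} loop the quantities $\hat{L}_{n+1}$, $a_{n+1}$, $\theta_n$, and the Young threshold $M^*(\theta_n)$ all depend on one another, so the estimate $\hat{L}_{n+1} \le 2 L^{2/q} (\theta_n \epsilon)^{-(2-q)/q}$ must be disentangled from this circular dependence through the doubling structure of the search before the difference inequality can even be written down. Everything else — the induction of the first part and the telescoping and algebra of the third — is essentially mechanical once the correct invariant and recursion are in hand, although recovering the precise constant $2^{(4q-2)/q}$ requires careful bookkeeping through the Young step and the mean value theorem estimate.
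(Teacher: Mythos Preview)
Your proposal is correct and follows essentially the same approach that the paper uses for its generalizations of this result. Note that the paper does not actually prove \cref{Prop:original} directly---it is quoted from \cite[Theorem~3]{Nesterov:2015}---but the paper's own proofs of the more general \cref{Thm:strong,Thm:uniform} (via the estimate-sequence \cref{Lem:estimating1,Lem:estimating2}) specialize exactly to your Part~1 induction when $\mu=0$, and your Part~2 combines the backtracking bound (stated as \cref{Lem:backtracking_strong0}, proved in \cite[Proposition~A.2]{RD:2020}) with a telescoping argument that is the $\mu=0$ analogue of the recurrence manipulations in the proof of \cref{Thm:strong0}. The circular-dependence obstacle you flag is real and is precisely why the paper outsources that step to \cite{RD:2020}; once it is granted, your mean-value-theorem telescoping recovers the constant $2^{(4q-2)/q}$ exactly, since $\tfrac{q}{3q-2}\ge\tfrac12$ for $1\le q\le 2$.
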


Proposition~\ref{Prop:original} says that $O (\epsilon^{-\frac{2}{3q-2}})$ iterations of Algorithm~\ref{Alg:original} are enough to obtain an $\epsilon$-solution of~\eqref{model} for any given target accuracy $\epsilon$, even though the energy error $F(x_n) - F(x^*)$ may stagnate at some level and not converge to zero.
Such a complexity estimate is optimal for first-order methods under Assumption~\ref{Ass:smooth}~\cite{NN:1985}.
On the other hand, it is well-known that the lower bound for the number of iterations to obtain an $\epsilon$-solution can be improved if the objective function satisfies the following additional assumption on uniform convexity.

\begin{assumption}
\label{Ass:convex}
In~\eqref{model}, the function $f$ is $(p, \mu)$-uniformly convex on $K_0$ for some $p \geq 2$ and $\mu \geq 0$, where $K_0$ was defined in~\eqref{K0}.
\end{assumption}

Under Assumptions~\ref{Ass:smooth} and~\ref{Ass:convex}, one can define the generalized condition number $\kappa$ of $f$ on $K_0$ as follows~\cite{RD:2020}:
\begin{equation}
\label{kappa}
\kappa = \frac{L^{\frac{2}{q}}}{\mu^\frac{2}{p}}.
\end{equation}
It was observed in, e.g.,~\cite{Park:2020,RD:2020}, that convergence behaviors of first-order methods for~\eqref{model} depends on $\kappa$; the greater $\kappa$ is, the slower the convergence rate is in general.

\begin{algorithm}[]
\caption{Universal scheduled restarts for~\eqref{model} under Assumptions~\ref{Ass:smooth} and~\ref{Ass:convex}}
\begin{algorithmic}[]
\State Choose $x_0 \in X$, $L_0 > 0$, $\epsilon_0 > 0$, $\gamma \geq 0$, and $\{ t_k > 0 \}_{k\geq 1}$.
\State Let $A_0 = 0$ and $\phi_0 (x) = \frac{1}{2} \| x - x_0 \|^2$.

\For{$n=0,1,2,\dots$}
\State \begin{equation*}
\hat{L}_{n+1} \leftarrow L_n / 2, \quad
v_n = \argmin_{x \in X} \phi_n (x)
\end{equation*}

\Repeat
\State Find $a_{n+1} \geq 0$ such that $\displaystyle \frac{a_{n+1}^2}{A_n + a_{n+1}} = \frac{1}{\hat{L}_{n+1}}$.
\State Compute $\theta_n$, $y_n$, $z_n$, and $\tx_{n+1}$ by~\eqref{common_subroutine}.

\If {$\displaystyle F(\tx_{n+1}) > \ell_F(\tx_{n+1}; y_n) + \frac{\hat{L}_{n+1}}{2} \| \tx_{n+1} - y_n \|^2 + \frac{\theta_n \epsilon_n}{2}$}
\State \begin{equation*}
\hat{L}_{n+1} \leftarrow 2\hat{L}_{n+1}
\end{equation*}
\EndIf
\Until {$\displaystyle F(\tx_{n+1}) \leq \ell_F(\tx_{n+1}; y_n) + \frac{\hat{L}_{n+1}}{2} \| \tx_{n+1} - y_n \|^2 + \frac{\theta_n\epsilon_n}{2}$}

\State Pick $x_{n+1}$ such that $\displaystyle F(x_{n+1}) \leq \min \left\{ F(\tx_{n+1}), F(x_n) \right\}$.
\begin{equation*}
L_{n+1} = \hat{L}_{n+1}
\end{equation*}

\If {$\displaystyle n+1 = \sum_{k=1}^r \lceil t_k \rceil$ for some $r \geq 1$}
\State \begin{equation*}
\epsilon_{n+1} = e^{-\gamma} \epsilon_n, \quad
A_{n+1} = 0, \quad
\phi_{n+1} (x) = \frac{1}{2} \| x - x_{n+1} \|^2
\end{equation*}
\Else
\State \begin{equation*}
\epsilon_{n+1} = \epsilon_{n}, \quad
A_{n+1} = A_n + a_{n+1}, \quad
\phi_{n+1}(x) = \phi_{n}(x) + a_{n+1} \ell_F (x; y_n)
\end{equation*} 
\EndIf
\EndFor
\end{algorithmic}
\label{Alg:scheduled}
\end{algorithm}

A common approach to achieve the optimal energy convergence rate for uniformly convex problems is the restarting technique~\cite{NN:1985,Nesterov:2013,OC:2015,RD:2020}.
In~\cite{RD:2020}, the scheduled restarting technique for the universal fast gradient method was proposed; for a prescribed restarting schedule $\{t_k > 0 \}_{k \geq 1}$, Algorithm~\ref{Alg:original} is restarted at each $t_k$ iterations.
The universal fast gradient method equipped with scheduled restarts is summarized in Algorithm~\ref{Alg:scheduled}.
As stated in Proposition~\ref{Prop:scheduled}, Algorithm~\ref{Alg:scheduled} achieves the optimal energy convergence if a restarting schedule $\{ t_k > 0 \}_{k \geq 1}$ and a sequence of tolerances $\{ \epsilon_n \}_{n \geq 0}$ are suitably chosen~\cite[Proposition~3.1 and Lemma~B.1]{RD:2020}.

\begin{proposition}
\label{Prop:scheduled}
In Algorithm~\ref{Alg:scheduled}, we choose
\begin{equation*}
\epsilon_0 \geq e^{-\gamma} \left( F(x_0) - F(x^*) \right), \quad
\gamma = \frac{3q-2}{2}, \quad
t_k = C e^{\left( 1 - \frac{q}{p} \right)k},
\end{equation*}
where
$
C = e^{\frac{q}{p}} (8e^{\frac{2}{e}} \kappa )^{\frac{q}{3q-2}} (e^{\gamma}\epsilon_0)^{- \frac{2(p-q)}{p(3q-2)}}
$
and $\kappa$ was defined in~\eqref{kappa}.
If $n = \sum_{k=1}^r \lceil t_k \rceil$ for some $r \geq 1$, we have the following:
\begin{enumerate}
\item In the case $p=q$, we have
\begin{equation*}
F(x_n ) - F(x^* ) = O \left( \epsilon_0 \exp \left( - \kappa^{-\frac{q}{3q-2}} n \right) \right) .
\end{equation*}
\item In the case $p > q$, we have
\begin{equation*}
F(x_n) - F(x^*) = O \left( \epsilon_0 \kappa^{-\frac{pq}{2(p-q)}} n^{-\frac{p(3q-2)}{2(p-q)}} \right).
\end{equation*}
\end{enumerate}
\end{proposition}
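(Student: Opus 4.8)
The plan is to run a standard restart analysis, treating each block of $\lceil t_k\rceil$ iterations between consecutive restarts as a single self-contained application of the base method and chaining the per-block guarantees. Write $n_r = \sum_{k=1}^r \lceil t_k\rceil$ for the iteration index at the $r$-th restart and let $\delta_r = F(x_{n_r}) - F(x^*)$ be the optimality gap there, with $\delta_0 = F(x_0) - F(x^*)$. The two ingredients I would combine are \cref{Prop:original}, which controls one block, and the sharpness inequality \cref{sharp}: since $f$ is $(p,\mu)$-uniformly convex on $K_0$ (\cref{Ass:convex}) and $g$ is convex, the sum $F$ is itself $(p,\mu)$-uniformly convex, so $F(x)-F(x^*)\ge \frac{\mu}{p}\|x-x^*\|^p$ on $K_0$; equivalently $\|x-x^*\|^2 \le (p\delta/\mu)^{2/p}$ whenever $F(x)-F(x^*)\le\delta$.

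The heart of the argument is a one-block recursion. During the $k$-th block the algorithm restarts at $x_{n_{k-1}}$ with tolerance $\epsilon_{k-1}=e^{-\gamma(k-1)}\epsilon_0$ and runs the base scheme for $\lceil t_k\rceil$ iterations, so \cref{Prop:original} (applied with $\epsilon=\epsilon_{k-1}$) gives $\delta_k \le \frac{\|x_{n_{k-1}}-x^*\|^2}{2A_{\lceil t_k\rceil}} + \frac{\epsilon_{k-1}}{2}$, together with the stated lower bound on $A_{\lceil t_k\rceil}$. Substituting the sharpness bound for $\|x_{n_{k-1}}-x^*\|^2$ in terms of $\delta_{k-1}$ and the upper bound on $A_{\lceil t_k\rceil}^{-1}$, I obtain an inequality of the form $\delta_k \le (\mathrm{const})\,\kappa\, p^{2/p}\,\epsilon_{k-1}^{-(2-q)/q}\,\lceil t_k\rceil^{-(3q-2)/q}\,\delta_{k-1}^{2/p} + \frac{\epsilon_{k-1}}{2}$, where $\kappa=L^{2/q}/\mu^{2/p}$. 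The whole point of the prescribed schedule is that $t_k$ is chosen just large enough to force the first term below $\epsilon_{k-1}/2$ whenever $\delta_{k-1}\le e^{\gamma}\epsilon_{k-1}$, which yields $\delta_k\le\epsilon_{k-1}=e^{\gamma}\epsilon_k$. With the base case $\delta_0\le e^{\gamma}\epsilon_0$ furnished by the hypothesis on $\epsilon_0$, induction maintains $\delta_r \le e^{\gamma}\epsilon_r = e^{\gamma}e^{-\gamma r}\epsilon_0$, i.e. a clean contraction by $e^{-\gamma}$ per restart.

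It remains to convert the restart count $r$ into the iteration count $n=n_r$ and read off the two regimes. When $p=q$ the exponent $1-q/p$ vanishes, so $t_k\equiv C$ is constant, $n\asymp Cr$, and $\delta_r\lesssim \epsilon_0 e^{-\gamma r}\lesssim \epsilon_0\exp(-(\gamma/C)n)$; inserting $\gamma=(3q-2)/2$ and $C\propto \kappa^{q/(3q-2)}$ produces the linear rate $O(\epsilon_0\exp(-\kappa^{-q/(3q-2)}n))$. When $p>q$ the schedule grows geometrically, $t_k = Ce^{(1-q/p)k}$, so the sum $n=\sum_{k=1}^r\lceil t_k\rceil$ is dominated by its last term and $n\asymp e^{(1-q/p)r}$; inverting gives $e^{-\gamma r}\asymp n^{-\gamma/(1-q/p)}$ with $\gamma/(1-q/p)=p(3q-2)/(2(p-q))$, and plugging this into $\delta_r\lesssim\epsilon_0 e^{-\gamma r}$ yields the sublinear rate with the stated powers of $\kappa$ and $n$.

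The main obstacle is the second paragraph: checking that a single constant $C$ and a single exponent $1-q/p$ in the schedule make the first recursion term dominated by $\epsilon_{k-1}/2$ simultaneously for every $k$. This hinges on exponent arithmetic—collecting the powers of $\epsilon_{k-1}$ coming from $A_{\lceil t_k\rceil}^{-1}$ (namely $\epsilon_{k-1}^{-(2-q)/q}$) and from the substitution $\delta_{k-1}\le e^{\gamma}\epsilon_{k-1}$ in $\delta_{k-1}^{2/p}$, and verifying that the residual $k$-dependence of the required $t_k$ is exactly $e^{(1-q/p)k}$. The choice $\gamma=(3q-2)/2$ is precisely what makes the $\epsilon$-powers telescope so that the per-block requirement on $t_k$ takes this pure geometric form. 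The rest is routine bookkeeping: absorbing $p^{2/p}\le e^{2/e}$ and $2^{(4q-2)/(3q-2)}\le 8$ into $C$, and controlling the effect of the ceilings $\lceil t_k\rceil$ when summing the schedule.
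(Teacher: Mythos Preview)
Your proposal is correct and follows essentially the same approach as the paper: the paper does not prove \cref{Prop:scheduled} itself but cites \cite[Proposition~3.1 and Lemma~B.1]{RD:2020}, and your argument is precisely the standard scheduled-restart analysis carried out there---combining the per-block estimate of \cref{Prop:original} with the sharpness bound $\|x-x^*\|^2\le (p\delta/\mu)^{2/p}$, showing by induction that $\delta_r\le e^{\gamma}\epsilon_r$, and then inverting the relation between $r$ and $n$ in the two regimes. Your exponent bookkeeping, including the identification of $\gamma\cdot\frac{2(p-q)}{p(3q-2)}=1-\frac{q}{p}$ that forces the geometric growth rate of $t_k$, matches the cited derivation.
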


\begin{figure}[]
\centering
\subfloat[][Varying $\epsilon_0$, $C=2$]{ \includegraphics[width=0.45\linewidth]{./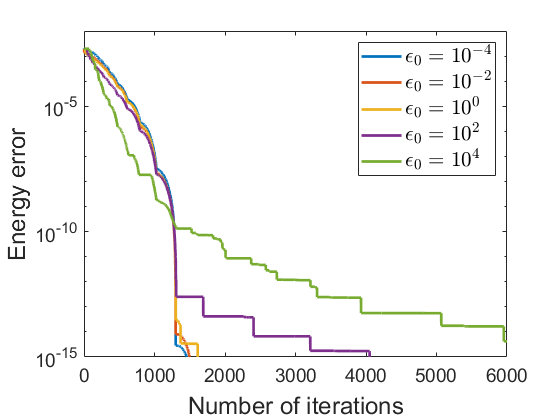} }
\subfloat[][Varying $C$, $\epsilon_0 = 10^{-2}$]{ \includegraphics[width=0.45\linewidth]{./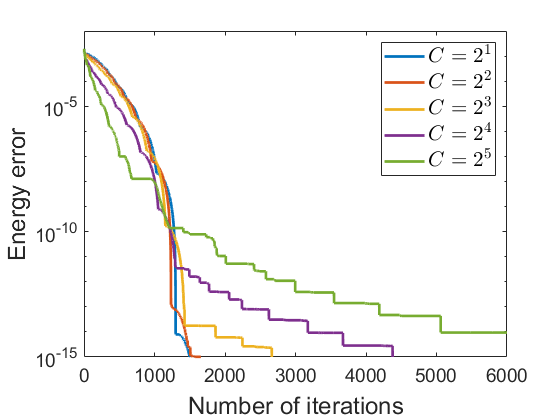} }
\caption{Decay of the energy error $F(x_n) - F(x^*)$  of Algorithm~\ref{Alg:scheduled}~($x_0 = 0$, $L_0 = 1$, $\gamma = \frac{3q-2}{2}$) for~\eqref{sLap_FEM}~($s=1.5$, $b=1$, $h=2^{-5}$) with respect to \textbf{(a)} various $\epsilon_0$ and \textbf{(b)} various $C$.
In this case, we have $p=2$ and $q=1.5$.}
\label{Fig:scheduled}
\end{figure}

In spite of the optimal convergence property of Algorithm~\ref{Alg:scheduled} presented in Proposition~\ref{Prop:scheduled}, it has a disadvantage that its convergence rate is highly sensitive on a choice of parameters $\epsilon_0$ and $C$, where $t_k = C e^{\left(1- \frac{q}{p} \right)k}$ for all $k \geq 1$.
Figure~\ref{Fig:scheduled} shows the convergence behavior of Algorithm~\ref{Alg:scheduled} for various values of $\epsilon_0$ and $C$.
One can readily observe from Figure~\ref{Fig:scheduled} that the convergence rate of Algorithm~\ref{Alg:scheduled} deteriorates critically if either $\epsilon_0$ or $C$ is chosen not properly; detailed settings on the numerical experiments corresponding to Figure~\ref{Fig:scheduled} will be presented in Section~\ref{Sec:Numerical}.
In order to compute the theoretically guaranteed values of $\epsilon_0$ and $C$ given in Proposition~\ref{Prop:scheduled}, one requires prior information on $F(x^*)$, $L$, and $\mu$, which is a quite restrictive situation.
For practical uses of Algorithm~\ref{Alg:scheduled}, grid searches on $\epsilon_0$ and $C$ must be accompanied.
We note that, very recently, a nearly-optimal restarting scheme that does not require a priori knowledge on the parameters was proposed in~\cite{RG:2021}, based on parallel executions of multiple copies of first-order methods.

\section{Acceleration for strongly convex problems}
\label{Sec:Strong}
As an alternative approach to the restarting technique, one may consider designing suitable momentum acceleration schemes that reflect the uniform convexity of the objective function~\cite{CC:2019,CP:2016,Nesterov:2013,Nesterov:2018}.
In this section, we propose fast gradient methods with novel momentum techniques for the composite optimization~\eqref{model} under Assumption~\ref{Ass:smooth} and the following additional strong convexity assumption.

\begin{assumption}
\label{Ass:strong}
In~\eqref{model}, the function $f$ is $(2, \mu)$-uniformly convex on $K_0$ for some $\mu \geq 0$, where $K_0$ was defined in~\eqref{K0}.
\end{assumption}

Throughout this section, we assume that a priori information on the strong convexity parameter $\mu$ is available, as it is a common assumption on designing momentum acceleration schemes for strongly convex problems~\cite{CC:2019,CP:2016,Nesterov:2013,Nesterov:2018}.
We mention that there have been proposed several accelerated first-order methods adaptive to the unknown strong convexity parameter; see, e.g.,~\cite{FQ:2019,Nesterov:2013}.
Motivated by~\cite{Nesterov:2013}, we present a generalized version of Algorithm~\ref{Alg:original} for strongly convex objectives in  Algorithm~\ref{Alg:strong0}.
It is clear that Algorithm~\ref{Alg:strong0} reduces to Algorithm~\ref{Alg:original} if we set $\mu = 0$.

\begin{algorithm}[]
\caption{Fast gradient method for~\eqref{model} under Assumptions~\ref{Ass:smooth} and~\ref{Ass:strong}}
\begin{algorithmic}[]
\State Choose $x_0 \in X$, $L_0 > 0$, and $\epsilon > 0$.
\State Let $A_0 = 0$ and $\phi_0 (x) = \frac{1}{2} \| x - x_0 \|^2$.

\For{$n=0,1,2,\dots$}
\State \begin{equation*}
\hat{L}_{n+1} \leftarrow L_n / 2, \quad
v_n = \argmin_{x \in X} \phi_n (x)
\end{equation*}

\Repeat
\State Find $a_{n+1} \geq 0$ such that
\begin{equation}
\label{A_n_strong0}
\frac{a_{n+1}^2}{A_n + a_{n+1}} = \frac{1 + \mu A_n}{\hat{L}_{n+1}}.
\end{equation}
\State Compute $\theta_n$, $y_n$, $z_n$, and $\tx_{n+1}$ by~\eqref{common_subroutine}.

\If {$\displaystyle F(\tx_{n+1}) > \ell_F(\tx_{n+1}; y_n) + \frac{\hat{L}_{n+1}}{2} \| \tx_{n+1} - y_n \|^2 + \frac{\theta_n \epsilon}{2}$}
\State \begin{equation*}
\hat{L}_{n+1} \leftarrow 2\hat{L}_{n+1}
\end{equation*}
\EndIf
\Until {$\displaystyle F(\tx_{n+1}) \leq \ell_F(\tx_{n+1}; y_n) + \frac{\hat{L}_{n+1}}{2} \| \tx_{n+1} - y_n \|^2 + \frac{\theta_n\epsilon}{2}$}
\State Pick $x_{n+1}$ such that $\displaystyle F(x_{n+1}) \leq \min \left\{ F(\tx_{n+1}), F(x_n) \right\}$.
\begin{gather*} 
L_{n+1} = \hat{L}_{n+1}, \quad 
A_{n+1} = A_n + a_{n+1}, \\
\phi_{n+1}(x) = \phi_{n}(x) + a_{n+1} \left( \ell_F (x; y_n) + \frac{\mu}{2} \| x - y_n \|^2 \right)
\end{gather*} 
\EndFor
\end{algorithmic}
\label{Alg:strong0}
\end{algorithm}

One may regard Algorithm~\ref{Alg:strong0} as a combination of Algorithm~\ref{Alg:original} and the momentum strategy proposed in~\cite{Nesterov:2013} for strongly convex problems.
In Algorithm~\ref{Alg:strong0}, there is an Armijo-type backtracking process~\cite{Armijo:1966} to find a suitable descent step size in terms of $L_n$.
The following lemma ensures that the backtracking process ends in finite steps, i.e., $L_n$ does not blow up to infinity. 

\begin{lemma}
\label{Lem:backtracking_strong0}
The backtracking process in Algorithm~\ref{Alg:strong0} terminates in finite steps.
Moreover, we have
\begin{equation*}
L_{n+1} \leq \frac{2L^{\frac{2}{q}}}{(\theta_n \epsilon)^{\frac{2-q}{q}}}, \quad n \geq 0.
\end{equation*}
\end{lemma}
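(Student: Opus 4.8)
The plan is to reduce the acceptance test in the \texttt{REPEAT} loop to a single scalar inequality in $t = \| \tx_{n+1} - y_n \|$, and then to control the trial parameter $\hat{L}_{n+1}$ through Young's inequality together with the halve-then-double structure of the backtracking. First I would rewrite the termination criterion
\begin{equation*}
F(\tx_{n+1}) \le \ell_F(\tx_{n+1}; y_n) + \frac{\hat{L}_{n+1}}{2}\| \tx_{n+1} - y_n\|^2 + \frac{\theta_n \epsilon}{2}.
\end{equation*}
Using $\ell_F(x;y) = f(y) + \left< \nabla f(y), x-y\right> + g(x)$ and $F = f+g$, the nonsmooth parts $g(\tx_{n+1})$ cancel, so this is equivalent to
\begin{equation*}
f(\tx_{n+1}) - f(y_n) - \left< \nabla f(y_n), \tx_{n+1} - y_n \right> \le \frac{\hat{L}_{n+1}}{2}t^2 + \frac{\theta_n \epsilon}{2}.
\end{equation*}
By the $(q,L)$-weak smoothness of $f$ from \cref{Ass:smooth}, the left-hand side is at most $\frac{L}{q}t^q$, so it suffices to establish the scalar bound $\frac{L}{q}t^q \le \frac{\hat{L}_{n+1}}{2}t^2 + \frac{\theta_n\epsilon}{2}$ for all $t \ge 0$.

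Second, I would prove this scalar bound by Young's inequality with the conjugate exponents $\frac{2}{q}$ and $\frac{2}{2-q}$ (the case $q=2$ being trivial). Splitting $\frac{L}{q}t^q = \bigl(\frac{\hat{L}_{n+1}}{q}t^2\bigr)^{q/2}\cdot\frac{L}{q}\bigl(\frac{q}{\hat{L}_{n+1}}\bigr)^{q/2}$ and applying Young yields
\begin{equation*}
\frac{L}{q}t^q \le \frac{\hat{L}_{n+1}}{2}t^2 + \frac{2-q}{2}\cdot\frac{L^{\frac{2}{2-q}}}{q\,\hat{L}_{n+1}^{\frac{q}{2-q}}}.
\end{equation*}
The residual constant is dominated by $\frac{\theta_n\epsilon}{2}$ exactly when $\hat{L}_{n+1} \ge \bigl(\frac{2-q}{q}\bigr)^{\frac{2-q}{q}}\frac{L^{2/q}}{(\theta_n\epsilon)^{(2-q)/q}}$, and since $q \ge 1$ forces $\frac{2-q}{q}\le 1$ while the exponent is nonnegative, this threshold is itself bounded by $M^\star := \frac{L^{2/q}}{(\theta_n\epsilon)^{(2-q)/q}}$. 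Hence the acceptance test is guaranteed to pass once the trial value reaches $M^\star$.

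Third, I would convert this into finite termination and the stated bound. Because each trial re-solves $\frac{a_{n+1}^2}{A_n+a_{n+1}} = \frac{1+\mu A_n}{\hat{L}_{n+1}}$, the quantity $\theta_n$ is nonincreasing in $\hat{L}_{n+1}$, behaving like $\hat{L}_{n+1}^{-1/2}$ for large $\hat{L}_{n+1}$; consequently the moving threshold $M^\star$ grows only like $\hat{L}_{n+1}^{(2-q)/(2q)}$, with exponent strictly below $1$, so after finitely many doublings the trial value overtakes $M^\star$ and the loop stops, giving finite termination. For the bound, if at least one doubling occurred then the penultimate trial $L_{n+1}/2$ failed the test, so by the contrapositive of the previous step $L_{n+1}/2 < M^\star$ evaluated at the larger $\theta$ of that trial; the monotonicity of $\theta_n$ in $\hat{L}_{n+1}$ together with the nonnegative exponent $\frac{2-q}{q}$ then upgrades this to $L_{n+1} < \frac{2L^{2/q}}{(\theta_n\epsilon)^{(2-q)/q}}$, which is the claim.

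The main obstacle is precisely the coupling between $\theta_n$ and the trial $\hat{L}_{n+1}$: since $\theta_n$ is recomputed at every trial, the acceptance threshold $M^\star$ is not fixed but drifts upward as $\hat{L}_{n+1}$ grows, so the naive ``double until above a fixed threshold'' reasoning must be replaced by the sublinear-growth estimate for termination and by the $\theta_n$-monotonicity for the bound. A second, more delicate point is the first-trial (no-doubling) case $L_{n+1}=L_n/2$, which is not covered by the contrapositive; I would treat it by induction on $n$, tracing back to the most recent iteration at which a doubling occurred and using the anchor $\theta_0 = 1$ at $n=0$ (where $A_0 = 0$ makes the threshold independent of the trial value).
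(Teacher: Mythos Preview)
Your approach is correct and coincides with the argument the paper invokes via \cite[Proposition~A.2]{RD:2020}: the Young-inequality bound you derive on $\frac{L}{q}t^q$ is precisely \cref{Lem:inexact_smooth} (your sharper threshold $\bigl(\tfrac{2-q}{q}\bigr)^{(2-q)/q}L^{2/q}/(\theta_n\epsilon)^{(2-q)/q}$ is exactly the constant $M_\epsilon$ noted in \cref{Rem:inexact_smooth}), and the paper uses this same route explicitly when proving the harder variable-tolerance analogue \cref{Lem:backtracking_strong_opt}. Your explicit treatment of the $\theta_n$--$\hat L_{n+1}$ coupling for termination and of the no-doubling edge case is more careful than what the paper spells out here.
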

\begin{proof}
See~\cite[Proposition~A.2]{RD:2020}.
\end{proof}

Similar to Proposition~\ref{Prop:original}, one can derive a complexity estimate for the number of iterations of Algorithm~\ref{Alg:strong0} that is required to get an $\epsilon$-solution of~\eqref{model} as follows.

\begin{theorem}
\label{Thm:strong0}
In Algorithm~\ref{Alg:strong0}, we have
\begin{equation}
\label{error_strong0}
F(x_n ) - F(x^* ) \leq  \frac{\| x_0 - x^* \|^2}{2A_n} + \frac{\epsilon}{2},
\quad n \geq 1,
\end{equation}
where
\begin{equation}
\label{A_n_bound_strong0}
A_n \geq \frac{\epsilon^{\frac{2-q}{q}}}{2L^{\frac{2}{q}}} \left( 1 + \frac{\epsilon^{\frac{2-q}{3q-2}}}{2^{\frac{4q-2}{3q-2}} \kappa^{\frac{q}{3q-2}}} \right)^{\frac{3q-2}{q}(n-1)}, \quad n \geq 1.
\end{equation}
In addition, we need at most
\begin{equation*}
1 + \frac{q}{3q-2} \frac{\log \left( 2L^{\frac{2}{q}}  \| x_0 - x^* \|^2 \right) + \frac{2}{q} \log \frac{1}{\epsilon}}{\log \left( 1 + \frac{\epsilon^{\frac{2-q}{3q-2}}}{2^{\frac{4q-2}{3q-2}} \kappa^{\frac{q}{3q-2}}} \right)}
= O \left( \frac{\kappa^{\frac{q}{3q-2}}}{\epsilon^{\frac{2-q}{3q-2}}} \log \frac{1}{\epsilon} \right)
\end{equation*}
iterations to obtain an $\epsilon$-solution of~\eqref{model}.
\end{theorem}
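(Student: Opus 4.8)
The plan is to prove the three assertions in turn, the heart being a potential-function (estimate-sequence) argument for \cref{error_strong0}, after which the geometric lower bound \cref{A_n_bound_strong0} and the iteration count follow by elementary manipulations.

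For \cref{error_strong0}, write $\phi_n^* = \min_{x\in X}\phi_n(x)$ and first record two facts about $\phi_n$. Since $\phi_0 = \frac12\|\cdot-x_0\|^2$ is $1$-strongly convex and each update adds $a_{n+1}(\ell_F(\cdot;y_n)+\frac{\mu}{2}\|\cdot-y_n\|^2)$, whose first summand is convex and whose second is $a_{n+1}\mu$-strongly convex, an immediate induction shows $\phi_n$ is $(1+\mu A_n)$-strongly convex. On the other hand, \cref{Ass:strong} gives $\ell_F(x;y_k)+\frac{\mu}{2}\|x-y_k\|^2 \le F(x)$ for every $k$, so summing yields the upper bound $\phi_n(x)\le\frac12\|x-x_0\|^2+A_nF(x)$; evaluating at $x^*$ gives $\phi_n^*\le\frac12\|x_0-x^*\|^2+A_nF(x^*)$. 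I would then prove by induction the matching lower bound $A_nF(x_n)\le\phi_n^*+\frac{A_n\epsilon}{2}$, whence dividing the combined inequality by $A_n$ produces exactly \cref{error_strong0}.

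The inductive step is the main obstacle. Using the $(1+\mu A_n)$-strong convexity of $\phi_n$ about its minimizer $v_n$, one has $\phi_{n+1}(x)\ge\phi_n^*+\frac{1+\mu A_n}{2}\|x-v_n\|^2+a_{n+1}\ell_F(x;y_n)+\frac{a_{n+1}\mu}{2}\|x-y_n\|^2$. Here I would discard the last (nonnegative) term and exploit the key identity obtained from \cref{A_n_strong0}, namely $\frac{1+\mu A_n}{2}=\frac{a_{n+1}\theta_n\hat L_{n+1}}{2}$, which makes $z_n$ the exact minimizer of the remaining quadratic, giving $\phi_{n+1}^*\ge\phi_n^*+a_{n+1}\bigl(\ell_F(z_n;y_n)+\frac{\theta_n\hat L_{n+1}}{2}\|z_n-v_n\|^2\bigr)$. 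The remaining task is the one-step descent inequality, which I would obtain from the backtracking stopping criterion together with $F(x_{n+1})\le F(\tx_{n+1})$, the identity $\tx_{n+1}-y_n=\theta_n(z_n-v_n)$, convexity of $\ell_F(\cdot;y_n)$ along $\tx_{n+1}=(1-\theta_n)x_n+\theta_n z_n$, and $\ell_F(x_n;y_n)\le F(x_n)$; multiplying through by $A_{n+1}$ and using $A_{n+1}\theta_n=a_{n+1}$, $A_{n+1}(1-\theta_n)=A_n$ yields $A_{n+1}F(x_{n+1})\le A_nF(x_n)+a_{n+1}\bigl(\ell_F(z_n;y_n)+\frac{\theta_n\hat L_{n+1}}{2}\|z_n-v_n\|^2\bigr)+\frac{a_{n+1}\epsilon}{2}$. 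Combining this with the previous display and the induction hypothesis closes the induction, the per-step error $\frac{a_{n+1}\epsilon}{2}$ telescoping to $\frac{A_n\epsilon}{2}$ since $\sum_k a_{k+1}=A_n$. The subtlety to get right is that the strong-convexity term must be \emph{kept} in the update (to maintain the $(1+\mu A_{n+1})$ modulus for the next step) but \emph{dropped} inside this step, having already been spent in the minimization about $v_n$.

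For \cref{A_n_bound_strong0} I would pass to $B_n:=A_n^{q/(3q-2)}$. Substituting the bound $\hat L_{n+1}\le 2L^{2/q}(\theta_n\epsilon)^{-(2-q)/q}$ from \cref{Lem:backtracking_strong0} into \cref{A_n_strong0} and using $\theta_n=a_{n+1}/A_{n+1}$ gives, after rearranging exponents, $a_{n+1}=A_{n+1}-A_n\ge\bigl(\frac{(1+\mu A_n)\epsilon^{(2-q)/q}}{2L^{2/q}}\bigr)^{q/(3q-2)}A_{n+1}^{(2q-2)/(3q-2)}$. Since $t\mapsto t^{q/(3q-2)}$ is concave, the mean value theorem gives $B_{n+1}-B_n\ge\frac{q}{3q-2}(A_{n+1}-A_n)A_{n+1}^{-(2q-2)/(3q-2)}$; inserting the previous inequality, bounding $1+\mu A_n\ge\mu A_n=(L^{2/q}/\kappa)A_n$ via \cref{kappa}, and simplifying with $B_n=A_n^{q/(3q-2)}$ collapses everything to $B_{n+1}-B_n\ge\frac{2q}{3q-2}cB_n\ge cB_n$, where $c=\epsilon^{(2-q)/(3q-2)}/(2^{(4q-2)/(3q-2)}\kappa^{q/(3q-2)})$ and the final step uses $q\le2$. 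Hence $B_{n+1}\ge(1+c)B_n$; the base case $A_1=1/L_1\ge\epsilon^{(2-q)/q}/(2L^{2/q})$ follows from \cref{Lem:backtracking_strong0} with $\theta_0=1$, and iterating the recursion yields \cref{A_n_bound_strong0}. Finally, the iteration count follows by forcing $\frac{\|x_0-x^*\|^2}{2A_n}\le\frac\epsilon2$, i.e.\ $A_n\ge\|x_0-x^*\|^2/\epsilon$, into \cref{A_n_bound_strong0} and solving for $n$ by taking logarithms, the $O(\cdot)$ form coming from $\log(1+c)=\Theta(c)$ for the relevant small $c$. I expect the only genuine difficulty to be the bookkeeping in the inductive step above; the remaining estimates are exponent arithmetic once the substitution $B_n=A_n^{q/(3q-2)}$ and the concavity trick are in place.
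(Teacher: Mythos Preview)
Your proposal is correct and follows essentially the same route as the paper: the estimate-sequence argument you sketch for \cref{error_strong0} is exactly the content of \cref{Lem:estimating1,Lem:estimating2} (specialized to $p=2$, $\delta_n=0$, constant $\epsilon_n$), and your derivation of \cref{A_n_bound_strong0} via the substitution $B_n=A_n^{q/(3q-2)}$ is the same reduction the paper makes, the only cosmetic difference being that the paper packages the step $B_{n+1}\ge(1+c)B_n$ as a standalone recurrence lemma (\cref{Lem:recur_linear}, proved by the factorization $A_{n+1}-A_n\le(A_{n+1}^{1-1/\gamma}+A_n^{1-1/\gamma})(A_{n+1}^{1/\gamma}-A_n^{1/\gamma})$) rather than the equivalent concavity/MVT bound you use. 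The iteration count is obtained identically by solving $\|x_0-x^*\|^2/(2A_n)\le\epsilon/2$ for $n$ against \cref{A_n_bound_strong0} and applying $\log(1+c)=\Theta(c)$.
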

\begin{proof}
As the inequality~\eqref{error_strong0} is a special case of~\eqref{Thm:strong}, we omit its proof.
To estimate a lower bound for $A_n$, we start from~\eqref{A_n_strong0}:
it follows by Lemma~\ref{Lem:backtracking_strong0} that
\begin{equation*}
( A_{n+1} - A_n )^2 = \frac{A_{n+1} ( 1 + \mu A_n )}{L_{n+1}}
\geq \frac{(\theta_n \epsilon)^{\frac{2-q}{q}}}{2\kappa} A_n A_{n+1} ,
\end{equation*}
or equivalently,
\begin{equation}
\label{recur_strong0}
(A_{n+1} - A_n)^{\frac{3q-2}{q}} \geq \frac{\epsilon^{\frac{2-q}{q}}}{2\kappa}  A_n A_{n+1}^{\frac{2q-2}{q}}.
\end{equation}
Meanwhile, substituting $n=0$ into~\eqref{A_n_strong0} yields
$
A_1 = \frac{1}{L_1 } \geq \frac{\epsilon^{\frac{2-q}{q}}}{2L^{\frac{2}{q}}}.
$
Combining~\eqref{recur_strong0} with Lemma~\ref{Lem:recur_linear}, we obtain~\eqref{A_n_bound_strong0}.

Thanks to~\eqref{error_strong0}, it suffices to solve the following equation with respect to $n$ in order to compute the number of iterations required to obtain an $\epsilon$-solution:
\begin{equation*}
\frac{\| x_0 - x^* \|^2}{2} = \frac{\epsilon}{2} \cdot \frac{\epsilon^{\frac{2-q}{q}}}{2L^{\frac{2}{q}}} \left( 1 + \frac{\epsilon^{\frac{2-q}{3q-2}}}{2^{\frac{4q-2}{3q-2}} \kappa^{\frac{q}{3q-2}}} \right)^{\frac{3q-2}{q}(n-1)}.
\end{equation*}
The solution of the above equation with respect to $n$ is given by
\begin{equation*} \begin{split}
n &= 1 + \frac{q}{3q-2} \frac{\log \left( 2L^{\frac{2}{q}}  \| x_0 - x^* \|^2 \right) + \frac{2}{q} \log \frac{1}{\epsilon}}{\log \left( 1 + \frac{\epsilon^{\frac{2-q}{3q-2}}}{2^{\frac{4q-2}{3q-2}} \kappa^{\frac{q}{3q-2}}} \right)} \\
&\leq 1 + \frac{q}{3q-2} \left( \log \left( 2L^{\frac{2}{q}}  \| x_0 - x^* \|^2 \right) + \frac{2}{q} \log \frac{1}{\epsilon} \right) \left( \frac{1}{2} + \frac{2^{\frac{4q-2}{3q-2}} \kappa^{\frac{q}{3q-2}}}{\epsilon^{\frac{2-q}{3q-2}}} \right),
\end{split} \end{equation*}
where the inequality is due to an elementary inequality~\cite{Love:1980}
\begin{equation}
\label{log_ineq}
\log \left( 1 + \frac{1}{t} \right) \geq \frac{2}{2t+1}, \quad t > 0.
\end{equation}
This completes the proof.
\end{proof}

\begin{figure}[]
\centering
\includegraphics[width=0.45\linewidth]{./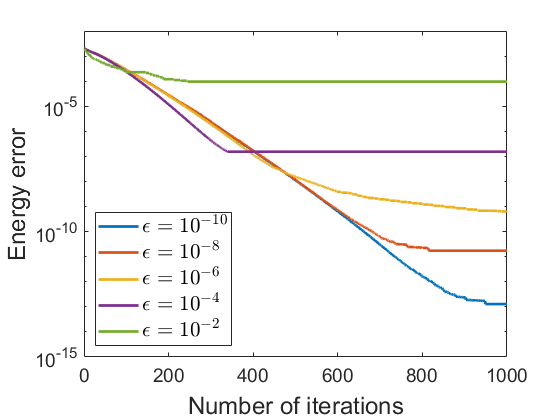} 
\caption{Decay of the energy error $F(x_n) - F(x^*)$ of Algorithm~\ref{Alg:strong0}~($x_0 = 0$, $L_0 = 1$) for~\eqref{sLap_FEM}~($s=1.5$, $b=1$, $h=2^{-5}$) with respect to various $\epsilon$.
In this case, we have $p=2$ and $q=1.5$.}
\label{Fig:strong0}
\end{figure}

As stated above, the iteration complexity of Algorithm~\ref{Alg:strong0} is $O ( \epsilon^{- \frac{2-q}{3q-2}} \log \epsilon^{-1})$, which is optimal under Assumptions~\ref{Ass:smooth} and~\ref{Ass:strong} up to a logarithmic factor~\cite{NN:1985}; we mention that a similar result was presented in~\cite{DGN:2013} before.
Another interesting point is the convergence behavior of the energy error $F(x_n) - F(x^*)$.
Equations~\ref{error_strong0} and~\ref{A_n_bound_strong0} implies that the energy error of Algorithm~\ref{Alg:strong0} decreases linearly until it stagnates at a particular value~\cite{Park:2021}. 
It is depicted in Figure~\ref{Fig:strong0} that as $\epsilon$ becomes smaller, the linear convergence rate becomes slower and the stagnation point becomes lower; see Section~\ref{Sec:Numerical} for details on the numerical experiments.
This motivates us to come up with an idea of choosing the tolerance $\epsilon$ ``adaptively.''
On the one hand, when the energy error is large, it seems good to set large $\epsilon$ so as to result fast decay of the energy error.
On the other hand, we have to choose small $\epsilon$ when the energy error is small to prevent stagnation of the energy.
In order to realize such an idea, we first present a generalized framework for Algorithm~\ref{Alg:strong0} that allows variable tolerance for each iteration; see Algorithm~\ref{Alg:strong}.

\begin{algorithm}[]
\caption{Fast gradient method for~\eqref{model} under Assumptions~\ref{Ass:smooth} and~\ref{Ass:strong} with variable tolerance}
\begin{algorithmic}[]
\State Choose $x_0 \in X$ and $L_0 > 0$.
\State Let $A_0 = 0$ and $\phi_0 (x) = \frac{1}{2} \| x - x_0 \|^2$.

\For{$n=0,1,2,\dots$}
\State \begin{equation*}
\hat{L}_{n+1} \leftarrow L_n / 2, \quad
v_n = \argmin_{x \in X} \phi_n (x)
\end{equation*}

\Repeat
\State Find $a_{n+1} \geq 0$ such that
\begin{equation}
\label{A_n_strong}
\frac{a_{n+1}^2}{A_n + a_{n+1}} = \frac{1 + \mu A_n}{\hat{L}_{n+1}}.
\end{equation}
\State Choose $\epsilon_n > 0$ and compute $\theta_n$, $y_n$, $z_n$, and $\tx_{n+1}$ by~\eqref{common_subroutine}.

\If {$\displaystyle F(\tx_{n+1}) > \ell_F(\tx_{n+1}; y_n) + \frac{\hat{L}_{n+1}}{2} \| \tx_{n+1} - y_n \|^2 + \frac{\theta_n \epsilon_n}{2}$}
\State \begin{equation*}
\hat{L}_{n+1} \leftarrow 2\hat{L}_{n+1}
\end{equation*}
\EndIf
\Until {$\displaystyle F(\tx_{n+1}) \leq \ell_F(\tx_{n+1}; y_n) + \frac{\hat{L}_{n+1}}{2} \| \tx_{n+1} - y_n \|^2 + \frac{\theta_n\epsilon_n}{2}$}
\State Pick $x_{n+1}$ such that $\displaystyle F(x_{n+1}) \leq \min \left\{ F(\tx_{n+1}), F(x_n) \right\}$.
\begin{gather*} 
L_{n+1} = \hat{L}_{n+1}, \quad 
A_{n+1} = A_n + a_{n+1}, \\
\phi_{n+1}(x) = \phi_{n}(x) + a_{n+1} \left( \ell_F (x; y_n) + \frac{\mu}{2} \| x - y_n \|^2 \right)
\end{gather*} 
\EndFor
\end{algorithmic}
\label{Alg:strong}
\end{algorithm}

The main feature of Algorithm~\ref{Alg:strong} is that the tolerance $\epsilon_n$ is newly determined at each iteration of the algorithm.
Different from the restart schedule $\{ t_k \}_{k \geq 1}$ in Algorithm~\ref{Alg:scheduled}, the sequence of tolerances $\{ \epsilon_n \}_{n \geq 0}$ in Algorithm~\ref{Alg:strong} need not to be determined prior to an execution of the algorithm.
Hence, $\epsilon_n$ may be determined based on some intermediate products of the algorithm up to the $n$th iteration such as $\{a_j\}_{1 \leq j \leq n+1}$, $\{ x_j \}_{1\leq j \leq  n}$, $\{ \epsilon_j \}_{1 \leq j \leq n-1}$, etc.
Note that if we choose $\epsilon_n = \epsilon > 0$ for all $n \geq 0$, then Algorithm~\ref{Alg:strong} reduces to Algorithm~\ref{Alg:strong0}.
The following theorem states an abstract energy error analysis for Algorithm~\ref{Alg:strong} with the general tolerance sequence.

\begin{theorem}
\label{Thm:strong}
In Algorithm~\ref{Alg:strong}, we have
\begin{equation*}
F(x_n ) - F(x^* ) \leq  \frac{\| x_0 - x^* \|^2}{2A_n} + \frac{\bepsilon_n}{2}, \quad n \geq 1,
\end{equation*}
where
$
\bepsilon_n = \frac{1}{A_n} \sum_{j=1}^n a_j \epsilon_{j-1}.
$
\end{theorem}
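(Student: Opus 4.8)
The plan is to run an estimating-sequence (potential-function) argument, proving by induction on $n$ the invariant
\begin{equation*}
A_n F(x_n) \leq \phi_n(v_n) + \frac{1}{2}\sum_{j=1}^n a_j \epsilon_{j-1}, \quad n \geq 0,
\end{equation*}
and separately bounding $\phi_n(v_n) = \min_x \phi_n(x)$ from above by $\frac{1}{2}\|x_0 - x^*\|^2 + A_n F(x^*)$. Subtracting the two and dividing by $A_n$ immediately yields the claimed estimate, since $\bepsilon_n = \frac{1}{A_n}\sum_{j=1}^n a_j\epsilon_{j-1}$. The base case $n=0$ is trivial because $A_0 = 0$ and $\phi_0(v_0) = 0$.

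For the upper bound I would first record the two consequences of convexity and $\mu$-strong convexity of $f$: for every $x$,
\begin{equation*}
\ell_F(x; y) \leq F(x) \quad\text{and}\quad \ell_F(x; y) + \frac{\mu}{2}\|x - y\|^2 \leq F(x).
\end{equation*}
Applying the second inequality term by term to the definition of $\phi_n$ and evaluating at $x = x^*$ gives $\phi_n(x^*) \leq \frac{1}{2}\|x_0 - x^*\|^2 + A_n F(x^*)$, and $\phi_n(v_n) \leq \phi_n(x^*)$ since $v_n$ minimizes $\phi_n$. I would also observe that $\phi_n$ is $(1+\mu A_n)$-strongly convex: each update adds a convex term $a_{n+1}\ell_F(\cdot;y_n)$ plus $a_{n+1}\frac{\mu}{2}\|\cdot - y_n\|^2$, so that $\phi_n(x) \geq \phi_n(v_n) + \frac{1+\mu A_n}{2}\|x-v_n\|^2$ for all $x$. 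This quadratic lower bound around $v_n$ is the engine of the inductive step, and the extra $\frac{\mu}{2}\|\cdot - y_n\|^2$ term in the algorithm exists precisely to keep this modulus growing correctly.

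The inductive step is the heart of the argument. Starting from $\phi_{n+1} = \phi_n + a_{n+1}(\ell_F(\cdot;y_n)+\frac{\mu}{2}\|\cdot-y_n\|^2)$, I would insert the strong-convexity lower bound on $\phi_n$, discard the nonnegative $\frac{\mu}{2}\|\cdot-y_n\|^2$ term, and minimize to get
\begin{equation*}
\phi_{n+1}(v_{n+1}) \geq \phi_n(v_n) + a_{n+1}\min_x\left[\ell_F(x;y_n) + \frac{1+\mu A_n}{2a_{n+1}}\|x-v_n\|^2\right].
\end{equation*}
The crucial identity is that \cref{A_n_strong} rearranges to $\frac{1+\mu A_n}{a_{n+1}} = \theta_n\hat L_{n+1}$, so the minimization above is exactly the subproblem defining $z_n$; writing $m_n = \ell_F(z_n;y_n)+\frac{\theta_n\hat L_{n+1}}{2}\|z_n-v_n\|^2$ for its value gives $\phi_{n+1}(v_{n+1}) \geq \phi_n(v_n) + a_{n+1}m_n$. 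For the matching upper bound on $A_{n+1}F(x_{n+1})$, I would use $F(x_{n+1}) \leq F(\tx_{n+1})$, the backtracking exit condition, the identity $\tx_{n+1}-y_n = \theta_n(z_n-v_n)$, convexity of $\ell_F(\cdot;y_n)$ along $\tx_{n+1}=(1-\theta_n)x_n+\theta_n z_n$, and $\ell_F(x_n;y_n)\leq F(x_n)$ to reach
\begin{equation*}
F(x_{n+1}) \leq (1-\theta_n)F(x_n) + \theta_n m_n + \frac{\theta_n\epsilon_n}{2}.
\end{equation*}
Multiplying by $A_{n+1}$ and using $\theta_n A_{n+1} = a_{n+1}$ and $(1-\theta_n)A_{n+1} = A_n$ turns this into $A_{n+1}F(x_{n+1}) \leq A_n F(x_n) + a_{n+1}m_n + \frac{a_{n+1}\epsilon_n}{2}$, which combines with the lower bound on $\phi_{n+1}(v_{n+1})$ and the induction hypothesis to close the invariant.

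The main obstacle is organizing the two sides so that the quadratic regularization in the $z_n$-subproblem lines up exactly with the strong-convexity modulus $1+\mu A_n$ of $\phi_n$; this is where the identity $\frac{1+\mu A_n}{a_{n+1}} = \theta_n\hat L_{n+1}$ (equivalently \cref{A_n_strong}) is indispensable. A secondary point of care is the nonsmooth $g$: since $\ell_F(\cdot;y_n)$ carries the full term $g$, I must use only convexity of $\ell_F(\cdot;y_n)$ (never differentiability) when passing through $\tx_{n+1}$, relying on the variational characterization of $v_n$ and $z_n$ rather than explicit formulas. The variable tolerance requires no new idea beyond carrying $\frac{1}{2}\sum_{j=1}^n a_j\epsilon_{j-1}$ as accumulated slack, which is exactly the telescoped contribution of the $\frac{\theta_n\epsilon_n}{2}$ terms.
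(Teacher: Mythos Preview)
Your proposal is correct and follows essentially the same estimating-sequence argument as the paper (see \cref{Lem:estimating1,Lem:estimating2,Thm:uniform}, specialized to $p=2$, $\delta_n=0$). The two key ingredients---the upper bound $\phi_n(x^*) \leq A_nF(x^*)+\tfrac12\|x_0-x^*\|^2$ from strong convexity of $f$, and the inductive lower bound $A_nF(x_n) \leq \phi_n(v_n)+\tfrac12\sum_{j=1}^n a_j\epsilon_{j-1}$ driven by the identity $\tfrac{1+\mu A_n}{a_{n+1}}=\theta_n\hat L_{n+1}$ and the backtracking exit condition---match the paper's \cref{Lem:estimating1,Lem:estimating2} line by line.
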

\begin{proof}
The proof can be done by the same argument as Theorem~\ref{Thm:uniform}.
\end{proof}

Next, we introduce several particular choices of $\{ \epsilon_n \}_{n \geq 0}$ that result satisfactorily fast decrease of the energy error.
As the first option, we choose
\begin{equation}
\label{e_strong_opt}
\epsilon_n = \frac{C}{a_{n+1} (A_n + a_{n+1})^{\frac{2-q}{3q-2}}}, \quad n \geq 0
\end{equation}
for some $C > 0$.
We will show that the energy error decay of Algorithm~\ref{Alg:strong}-\eqref{e_strong_opt} is optimal in the sense of Nemirovskii and Nesterov~\cite{NN:1985} up to a logarithmic factor.
In order to analyze the convergence behavior of Algorithm~\ref{Alg:strong}-\eqref{e_strong_opt}, one requires the following lemma which appeared in~\cite[Lemma~2]{Nesterov:2015} and~\cite[Lemma~A.4]{RD:2020}.

\begin{lemma}
\label{Lem:inexact_smooth}
Under Assumption~\ref{Ass:smooth}, for any $\epsilon > 0$, we have
\begin{equation*}
f(x) \leq f(y) + \left< \nabla f(y) , x-y \right> + \frac{L^{\frac{2}{q}}}{2\epsilon^{\frac{2-q}{q}}} \| x - y \|^2 + \frac{\epsilon}{2}, \quad x,y \in K_0.
\end{equation*}
\end{lemma}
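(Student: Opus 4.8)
The plan is to derive the bound directly from the $(q,L)$-weak smoothness of $f$ furnished by \cref{Ass:smooth}, reducing the whole statement to a single scalar inequality that is then settled by Young's inequality. First I would apply the defining inequality of \cref{Def:smooth} to $f$ on $K_0$ to obtain, for all $x,y \in K_0$,
\begin{equation*}
f(x) \le f(y) + \left< \nabla f(y), x-y \right> + \frac{L}{q}\|x-y\|^q.
\end{equation*}
Comparing this with the claimed inequality, it suffices to prove the pointwise estimate
\begin{equation*}
\frac{L}{q}\|x-y\|^q \le \frac{L^{\frac{2}{q}}}{2\epsilon^{\frac{2-q}{q}}}\|x-y\|^2 + \frac{\epsilon}{2},
\end{equation*}
which no longer involves $f$ and depends on $x,y$ only through $t := \|x-y\| \ge 0$.

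For the scalar inequality I would treat the endpoint $q=2$ separately---there the two $t^2$ terms coincide and the bound holds with the nonnegative slack $\epsilon/2$ to spare---and focus on $1 \le q < 2$. In that range I would invoke Young's inequality $ab \le \frac{a^r}{r} + \frac{b^{r'}}{r'}$ with the conjugate exponents $r = \frac{2}{q}$ and $r' = \frac{2}{2-q}$ (which are conjugate precisely because $\frac{q}{2} + \frac{2-q}{2} = 1$), applied to $a = c\,t^q$ for a scaling constant $c$ and a constant $b$. Here $c$ is chosen so that the $a^r$-term reproduces exactly the coefficient $\frac{L^{2/q}}{2\epsilon^{(2-q)/q}}$ of $t^2$; with that choice the product $ab$ reproduces $\frac{L}{q}t^q$, and the remaining $b^{r'}$-term is a pure constant.

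The one point that needs care---and the only place the hypothesis $q \ge 1$ enters---is verifying that this leftover constant is bounded by $\epsilon/2$. After unwinding the choices of $c$ and $b$, the constant evaluates to $\frac{2-q}{2q}\,\epsilon$, so the required bound $\frac{2-q}{2q} \le \frac{1}{2}$ is equivalent to $2-q \le q$, i.e. to $q \ge 1$, which is exactly the range assumed in \cref{Ass:smooth}. (As a sanity check, for $q=1$ the scalar inequality collapses to the completion of the square $\frac{L^2}{2\epsilon}t^2 + \frac{\epsilon}{2} - Lt = \frac{1}{2\epsilon}(Lt-\epsilon)^2 \ge 0$, which exhibits the mechanism transparently.) Substituting the scalar bound back into the weak-smoothness inequality then yields the claim for all $x,y \in K_0$. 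I do not expect a genuine obstacle: the argument is a one-parameter Young/AM--GM estimate, and the only subtlety is bookkeeping the exponents so that the coefficient of $t^2$ matches while the residual constant lands at or below $\epsilon/2$.
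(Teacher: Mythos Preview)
Your proposal is correct. The paper does not give its own proof of this lemma---it simply cites \cite[Lemma~2]{Nesterov:2015} and \cite[Lemma~A.4]{RD:2020}---but your argument via Young's inequality with conjugate exponents $\frac{2}{q}$ and $\frac{2}{2-q}$ is exactly the standard derivation found in those references, and it is also the mirror image of the proof the paper \emph{does} write out for the companion uniform-convexity estimate in \cref{Lem:inexact_convex}. Your computation of the residual constant $\frac{2-q}{2q}\epsilon \le \frac{\epsilon}{2}$ is correct and correctly identifies where $q\ge 1$ is used; this also matches the tighter form mentioned in \cref{Rem:inexact_smooth}, where the constant $M_\epsilon$ carries precisely the extra factor $\bigl(\frac{2-q}{q}\bigr)^{(2-q)/q}$ that your Young step produces before you relax it to~$1$.
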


\begin{remark}
\label{Rem:inexact_smooth}
We note that the following inequality which is a bit tighter result than Lemma~\ref{Lem:inexact_smooth} holds~\cite[Lemma~2]{Nesterov:2015}:
\begin{equation*}
f(x) \leq f(y) + \left< \nabla f(y) , x-y \right> + \frac{M_{\epsilon}}{2} \| x - y \|^2 + \frac{\epsilon}{2}, \quad x,y \in K_0,
\end{equation*}
where
\begin{equation*}
M_{\epsilon} = \left( \frac{2-q}{q} \frac{1}{\epsilon} \right)^{\frac{2-q}{q}} L^{\frac{2}{q}} \leq \frac{L^{\frac{2}{q}}}{\epsilon^{\frac{2-q}{q}}}
\end{equation*}
with the convention $0^0 = 1$.
However, we use Lemma~\ref{Lem:inexact_smooth} throughout the paper instead of the above one for the sake of simplicity.
\end{remark}
Using Lemma~\ref{Lem:inexact_smooth}, one can prove the finiteness of the backtracking process of Algorithm~\ref{Alg:strong}-\eqref{e_strong_opt}.
Since $\epsilon_n$ in~\eqref{e_strong_opt} is defined in terms of intermediate products $a_{n+1}$ and $A_n$, the argument of~\cite[Proposition~A.2]{RD:2020} cannot be applied in this case.

\begin{lemma}
\label{Lem:backtracking_strong_opt}
In Algorithm~\ref{Alg:strong}-\eqref{e_strong_opt}, the backtracking process terminates in finite steps.
Moreover, we have
\begin{equation}
\label{L_strong_opt}
L_{n+1} \leq \frac{2L^{\frac{2}{q}}}{(\theta_n \epsilon_n )^{\frac{2-q}{q}}}, \quad n \geq 0.
\end{equation}
\end{lemma}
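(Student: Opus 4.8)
The plan is to reduce the inner acceptance test to a scalar comparison, exactly as in the fixed-tolerance case~\cite{RD:2020}, and then to control by hand the new coupling between $\epsilon_n$ and $a_{n+1}$ introduced by~\eqref{e_strong_opt}. First I would apply \cref{Lem:inexact_smooth} with $x = \tx_{n+1}$, $y = y_n$, and inexactness parameter $\theta_n \epsilon_n$. Since $F(\tx_{n+1}) - \ell_F(\tx_{n+1}; y_n) = f(\tx_{n+1}) - f(y_n) - \langle \nabla f(y_n), \tx_{n+1} - y_n \rangle$, the lemma bounds this quantity by $\frac{L^{2/q}}{2 (\theta_n \epsilon_n)^{(2-q)/q}} \| \tx_{n+1} - y_n \|^2 + \frac{\theta_n \epsilon_n}{2}$. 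Consequently the termination test is guaranteed to pass as soon as $\hat{L}_{n+1} \geq \frac{L^{2/q}}{(\theta_n \epsilon_n)^{(2-q)/q}} =: h(\hat{L}_{n+1})$, and this single sufficient condition is what I would use both to prove finiteness and to extract the quantitative bound~\eqref{L_strong_opt}.

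The next step is to make $h$ explicit. Substituting the tolerance rule~\eqref{e_strong_opt} into $\theta_n = a_{n+1}/(A_n + a_{n+1})$ collapses the product to $\theta_n \epsilon_n = C (A_n + a_{n+1})^{-\frac{2q}{3q-2}}$, so that $h(\hat{L}_{n+1}) = L^{2/q} C^{-\frac{2-q}{q}} (A_n + a_{n+1})^{\frac{2(2-q)}{3q-2}}$. Along one backtracking loop, doubling $\hat{L}_{n+1}$ forces $a_{n+1}$ to decrease, because the left-hand side of~\eqref{A_n_strong} is increasing in $a_{n+1}$ while its right-hand side is decreasing in $\hat{L}_{n+1}$; hence $A_n + a_{n+1}$ decreases and $h$ stays bounded, with $A_n + a_{n+1} \to A_n$ as $\hat{L}_{n+1} \to \infty$. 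Since $\hat{L}_{n+1}$ itself grows without bound under repeated doubling while $h$ remains bounded, the condition $\hat{L}_{n+1} \geq h(\hat{L}_{n+1})$ is met after finitely many doublings, and by the previous paragraph the test then passes; this proves finite termination.

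The delicate point is the constant in~\eqref{L_strong_opt}. The argument of~\cite[Proposition~A.2]{RD:2020} does not transfer verbatim, precisely because the dependence of $\epsilon_n$ on $a_{n+1}$ reverses the monotonicity of the threshold. For a fixed tolerance, $h$ increases with $\hat{L}_{n+1}$, so the last rejected trial $\hat{L}_{n+1}/2$ immediately yields $L_{n+1} = 2 (\hat{L}_{n+1}/2) < 2 h(\hat{L}_{n+1}/2) \leq 2 h(L_{n+1})$. Here, by contrast, $h$ is a \emph{decreasing} function of $\hat{L}_{n+1}$, so evaluating the failed-trial inequality at the smaller argument $\hat{L}_{n+1}/2$ gives $h(\hat{L}_{n+1}/2) \geq h(L_{n+1})$ and the naive chain runs in the wrong direction. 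To repair this I would quantify how much $a_{n+1}$, and hence $h$, can change across a single doubling: dividing the two instances of~\eqref{A_n_strong} at $\hat{L}_{n+1}/2$ and $\hat{L}_{n+1}$ shows that the ratio of the corresponding values of $A_n + a_{n+1}$ is controlled (it equals $r^2/2$, where $r > 1$ is the ratio of the two $a_{n+1}$ values, and one checks $\sqrt{2} < r < 2$ when $A_n > 0$ and $r = 2$ when $A_n = 0$). Feeding this bounded ratio back into the failed-trial estimate converts the bound in terms of the rejected trial into one in terms of the accepted $\theta_n \epsilon_n$.

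I expect this last conversion to be the main obstacle: keeping explicit track of the change of $a_{n+1}$ over the final doubling so that the stated factor in~\eqref{L_strong_opt} is recovered, rather than a larger power of $2$. A secondary piece of bookkeeping is the degenerate case in which the initial trial $L_n / 2$ is accepted with no doubling at all, where the failed-trial inequality is unavailable and the bound must instead be read off from the monotonicity of $h$ together with the control already available on $L_n$.
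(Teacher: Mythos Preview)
Your treatment of finite termination is essentially the paper's: both arguments reduce the acceptance test to the scalar sufficient condition $\hat L_{n+1}\ge h(\hat L_{n+1}):=L^{2/q}(\theta_n\epsilon_n)^{-(2-q)/q}$ via \cref{Lem:inexact_smooth}, compute $\theta_n\epsilon_n=C(A_n+a_{n+1})^{-2q/(3q-2)}$, and then observe that the ratio $\hat L_{n+1}/h(\hat L_{n+1})$ tends to infinity along doublings. You phrase this as ``$h$ stays bounded while $\hat L_{n+1}\to\infty$'', the paper writes out the ratio explicitly; the content is the same.

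Where you go further than the paper is the quantitative bound~\cref{L_strong_opt}. The paper simply asserts that it is ``a direct consequence of \cref{Lem:inexact_smooth}'' and gives no details. You are right that the argument of~\cite[Proposition~A.2]{RD:2020} does not transfer, because with the tolerance~\cref{e_strong_opt} the threshold $h$ is \emph{decreasing} in $\hat L_{n+1}$, so the failed-trial inequality $L_{n+1}/2<h(L_{n+1}/2)$ cannot be pushed to $h(L_{n+1})$ by monotonicity alone. Your proposed fix---controlling the ratio $(A_n+a_{n+1}^{(\mathrm{rej})})/(A_n+a_{n+1})=r^2/2$ with $r\in(\sqrt2,2]$ across the last doubling---is the right idea and yields a valid bound of the form $L_{n+1}\le 2^{(q+2)/(3q-2)}\,h(L_{n+1})$.

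You should not, however, expect to recover the exact factor $2$ stated in~\cref{L_strong_opt}. In the worst case the actual acceptance threshold coincides with the crossover $L^*$ where $L^*=h(L^*)$; then $L_{n+1}$ can be just below $2L^*$, and for $n=0$ one checks $2h(2L^*)=2^{(5q-6)/(3q-2)}L^*$, which is strictly less than $2L^*$ whenever $q<2$. So the constant $2$ in~\cref{L_strong_opt} is generally too small; your larger constant is what the argument actually delivers. This is harmless: in the downstream estimates (\cref{Thm:strong_opt}) the bound is used only to replace $1/L_{n+1}$ by a constant multiple of $(\theta_n\epsilon_n)^{(2-q)/q}/L^{2/q}$, and the extra power of~$2$ is absorbed into~$\widetilde C$.

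For the no-doubling case you flag, the cleanest route is not induction on $L_n$ (the threshold $h$ changes with $A_n$, so that induction does not close), but rather the observation that if $L_n/2$ is accepted immediately one still has $L_{n+1}=L_n/2\le L_n$, and the previous step already produced $L_n\le c\,h_{n-1}(L_n)$; converting $h_{n-1}(L_n)$ to $h_n(L_{n+1})$ again only costs a bounded factor because $A_n+a_{n+1}\ge A_{n-1}+a_n$ and the exponent $2(2-q)/(3q-2)$ is nonnegative. Either way the outcome is the same: a bound $L_{n+1}\le c_q\,L^{2/q}(\theta_n\epsilon_n)^{-(2-q)/q}$ with a $q$-dependent constant $c_q\ge2$, which is all the rest of the paper needs.
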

\begin{proof}
Thanks to Lemma~\ref{Lem:inexact_smooth}, it suffices to show that $\hat{L}_{n+1} \geq \frac{L^{\frac{2}{q}}}{(\theta_n \epsilon_n )^{\frac{2-q}{q}}}$ for sufficiently large $\hat{L}_{n+1}$.
As the case $q=2$ is trivial, we may assume that $1 \leq q < 2$.
We first see that
\begin{equation*}
\theta_n \epsilon_n = \frac{a_{n+1}}{A_n + a_{n+1}} \frac{C}{a_{n+1} (A_n + a_{n+1})^{\frac{2-q}{3q-2}}} = \frac{C}{(A_n + a_{n+1})^{\frac{2q}{3q-2}}}.
\end{equation*}
Invoking~\eqref{A_n_strong}, we get
\begin{equation*} \begin{split}
\hat{L}_{n+1} \cdot \frac{(\theta_n \epsilon_n)^{\frac{2-q}{q}}}{L^{\frac{2}{q}}} &= \frac{(A_n + a_{n+1})(1+ \mu A_n )}{L^{\frac{2}{q}} a_{n+1}^2} \left( \frac{C}{(A_n + a_{n+1})^{\frac{2q}{3q-2}}} \right)^{\frac{2-q}{q}} \\
&\geq \frac{C^{\frac{2-q}{q}}}{L^{\frac{2}{q}}} \frac{\theta_n}{a_{n+1}^{\frac{q+2}{3q-2}}} \\
&= \frac{C^{\frac{2-q}{q}}}{L^{\frac{2}{q}}} \frac{1}{(A_n + a_{n+1}) a_{n+1}^{\frac{2(2-q)}{3q-2}}},
\end{split} \end{equation*}
where the inequality is due to $\frac{5q-6}{3q-2} \geq -1$.
Observing that $\frac{2(2-q)}{3q-2} > 0$ and that $a_{n+1}$ tends to $0$ as $\hat{L}_{n+1}$ tends to $\infty$~(see~\eqref{A_n_strong}), we deduce that $\hat{L}_{n+1} \cdot \frac{(\theta_n \epsilon_n)^{\frac{2-q}{q}}}{L^{\frac{2}{q}}}$ exceeds 1 for sufficiently large $\hat{L}_{n+1}$.
Meanwhile,~\eqref{L_strong_opt} is a direct consequence of Lemma~\ref{Lem:inexact_smooth}.
\end{proof}

Since the tolerance $\epsilon_n$ in~\eqref{e_strong_opt} decreases to 0 as $n$ increases, it is expected that the energy error tends to zero, not stagnating at some value like Algorithm~\ref{Alg:strong0}.
In the following theorem, we show that Algorithm~\ref{Alg:strong}-\eqref{e_strong_opt} enjoys $O(n^{- \frac{3q-2}{2-q}} \log n )$ convergence of the energy error when $1 \leq q < 2$, which is optimal up to a logarithmic factor under Assumptions~\ref{Ass:smooth} and~\ref{Ass:strong}~\cite{NN:1985}.

\begin{theorem}
\label{Thm:strong_opt}
In Algorithm~\ref{Alg:strong}-\eqref{e_strong_opt}, we have
\begin{equation}
\label{A_n_bound_strong_opt}
\displaystyle A_n \geq \begin{cases} \displaystyle \widetilde{C}n^{\frac{3q-2}{2-q}}, & 1 \leq q < 2, \\
\displaystyle \frac{1}{2L} \left( 1 + \frac{1}{2^{\frac{3}{2}} \kappa^{\frac{1}{2}}} \right)^{2(n-1)}, & q = 2, \end{cases}
\quad n \geq 1,
\end{equation}
where $\kappa$ was defined in~\eqref{kappa} and $\widetilde{C}$ is a positive constant depending on $q$, $L$, $\mu$, and $C$ only.
Consequently, it satisfies that
\begin{equation*}
F(x_n) - F(x^*) \leq \begin{cases} \displaystyle\frac{1}{2 \widetilde{C}n^{\frac{3q-2}{2-q}}} \left( \|x_0 - x^* \|^2 + \frac{C}{\widetilde{C}^{\frac{2-q}{3q-2}}} \left( 1 + \log n \right) \right), & 1 \leq q < 2, \\
\displaystyle L \left( \| x_0 - x^* \|^2 + Cn \right) \left( 1 + \frac{1}{2^{\frac{3}{2}} \kappa^{\frac{1}{2}}} \right)^{-2(n-1)}, & q = 2, \end{cases}
\gap n \geq 1.
\end{equation*}
\end{theorem}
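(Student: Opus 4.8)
The plan is to invoke \cref{Thm:strong} and reduce everything to a lower bound on the scaling sequence $\{A_n\}$. Writing $s = \frac{2-q}{3q-2}$ and substituting the choice~\cref{e_strong_opt} into $\bepsilon_n$, one finds $a_j \epsilon_{j-1} = C(A_{j-1}+a_j)^{-s} = C A_j^{-s}$, so that
\begin{equation*}
\bepsilon_n = \frac{C}{A_n} \sum_{j=1}^n A_j^{-s}.
\end{equation*}
Hence, once a lower bound for $A_n$ is available, both terms in the estimate of \cref{Thm:strong} are controlled and the two stated bounds on $F(x_n)-F(x^*)$ will follow by direct substitution. Thus essentially all the difficulty is concentrated in establishing~\cref{A_n_bound_strong_opt}.

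Second, I would derive the governing recursion for $A_n$. From~\cref{A_n_strong} and $a_{n+1}=A_{n+1}-A_n$ we have $(A_{n+1}-A_n)^2 = A_{n+1}(1+\mu A_n)/L_{n+1}$, while \cref{Lem:backtracking_strong_opt} together with the identity $\theta_n\epsilon_n = C A_{n+1}^{-\frac{2q}{3q-2}}$ (already computed in its proof) gives $L_{n+1}^{-1} \geq \frac{C^{\frac{2-q}{q}}}{2L^{\frac{2}{q}}} A_{n+1}^{-\frac{2(2-q)}{3q-2}}$. Using $1+\mu A_n \geq \mu A_n$ and $\kappa = L^{\frac{2}{q}}/\mu$ (recall $p=2$ under \cref{Ass:strong}; I take $\mu>0$, the case $\mu=0$ being covered by the universal method), this yields the master inequality
\begin{equation*}
(A_{n+1}-A_n)^2 \geq \frac{C^{\frac{2-q}{q}}}{2\kappa}\, A_n\, A_{n+1}^{\frac{5q-6}{3q-2}}, \quad n \geq 0.
\end{equation*}
For $q=2$ the exponent $\frac{5q-6}{3q-2}$ equals $1$ and $C^{\frac{2-q}{q}}=1$, so this is exactly the scale-invariant recursion appearing in the proof of \cref{Thm:strong0} (with the tolerance factor equal to $1$); combined with the base case $A_1 = 1/L_1 \geq 1/(2L)$, \cref{Lem:recur_linear} reproduces the second line of~\cref{A_n_bound_strong_opt} verbatim. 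This case is therefore immediate.

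The crux is the regime $1 \leq q < 2$, where $\frac{5q-6}{3q-2}<1$ makes the recursion sub-homogeneous and should force only polynomial growth. Taking square roots gives $A_{n+1}-A_n \geq c_0 A_n^{1/2} A_{n+1}^{\delta}$ with $\delta = \frac{5q-6}{2(3q-2)}$ and $c_0 = (C^{\frac{2-q}{q}}/(2\kappa))^{1/2}$. I would prove $A_n \geq \widetilde{C}\, n^{r}$ with $r = 1/s = \frac{3q-2}{2-q}\geq 1$ by induction, the engine being the algebraic identity $r(\tfrac12+\delta) = r-1$ (equivalently $\tfrac12+\delta-1=-s$). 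In the inductive step one lower-bounds $A_{n+1}^{\delta}$: when $\delta \geq 0$ (that is, $q \geq 6/5$) directly by $A_{n+1}^{\delta} \geq A_n^{\delta} \geq (\widetilde{C}n^r)^{\delta}$, and when $\delta < 0$ by a short contradiction argument that temporarily assumes $A_{n+1} < \widetilde{C}(n+1)^r$ so that $A_{n+1}^{\delta} > (\widetilde{C}(n+1)^r)^{\delta}$. In either case, using $A_n\geq\widetilde{C}n^r$ and $n+1\leq 2n$, the increment satisfies $A_{n+1}-A_n \geq c_1 \widetilde{C}^{\frac12+\delta} n^{r(\frac12+\delta)} = c_1\widetilde{C}^{\frac12+\delta} n^{r-1}$ for a constant $c_1$ depending only on $q$ and $c_0$ (the $\delta<0$ case contributing an extra factor $2^{r\delta}$). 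Comparing with $(n+1)^r - n^r \leq r\,2^{r-1} n^{r-1}$ (valid since $r\geq 1$), the induction closes as soon as $\widetilde{C}^{s} \leq c_1/(r\,2^{r-1})$, where the power of $\widetilde{C}$ matches precisely because $\tfrac12+\delta-1=-s$. The base case $A_1=a_1$ is bounded below by a constant depending only on $q,L,C$ through \cref{Lem:backtracking_strong_opt} at $n=0$ (where $\theta_0=1$ and $\epsilon_0 = C a_1^{-(1+s)}$); choosing $\widetilde{C}$ below both thresholds gives a constant depending only on $q,L,\mu,C$, as asserted.

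Finally I would assemble the energy estimate. For $1 \leq q < 2$, $A_j \geq \widetilde{C} j^{r}$ gives $A_j^{-s} \leq \widetilde{C}^{-s} j^{-1}$, whence $\sum_{j=1}^n A_j^{-s} \leq \widetilde{C}^{-s}(1+\log n)$ and $\bepsilon_n \leq C\widetilde{C}^{-s}(1+\log n)/(\widetilde{C}n^{r})$; adding $\|x_0-x^*\|^2/(2A_n) \leq \|x_0-x^*\|^2/(2\widetilde{C}n^r)$ and recalling $s=\frac{2-q}{3q-2}$ recovers the first displayed bound. For $q=2$, $s=0$ forces $\bepsilon_n = Cn/A_n$, and substituting the geometric lower bound for $A_n$ gives the second. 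I expect the main obstacle to be precisely the polynomial-growth induction: the recursion is implicit in $A_{n+1}$ and the sign of $\delta$ switches across $q=6/5$, so the inductive constant must be tracked with care; once the identity $r(\tfrac12+\delta)=r-1$ is recognized, the remaining manipulations are routine.
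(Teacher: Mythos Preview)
Your proposal is correct and follows the same overall architecture as the paper's proof: derive the master inequality $(A_{n+1}-A_n)^2 \geq \frac{C^{(2-q)/q}}{2\kappa}\,A_n A_{n+1}^{(5q-6)/(3q-2)}$ from~\cref{A_n_strong} and \cref{Lem:backtracking_strong_opt}, dispose of $q=2$ via \cref{Lem:recur_linear} and $A_1=1/L_1\geq 1/(2L)$, establish polynomial growth of $A_n$ for $1\leq q<2$, and then feed everything into \cref{Thm:strong} together with $\sum_{j\leq n}A_j^{-s}\leq \widetilde{C}^{-s}(1+\log n)$. The base case and the final assembly match the paper essentially verbatim.

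The one genuine difference is how the polynomial bound $A_n\geq\widetilde{C}n^{(3q-2)/(2-q)}$ is obtained. The paper uses the monotonicity $A_{n+1}\geq A_n$ to replace $A_{n+1}^{(5q-6)/(3q-2)}$ by $A_n^{(5q-6)/(3q-2)}$, thereby decoupling the recursion to $A_{n+1}-A_n\geq c\,A_n^{(4q-4)/(3q-2)}$ and invoking \cref{Lem:recur_sublinear}. You instead run a direct induction on $A_n\geq\widetilde{C}n^r$, splitting according to the sign of $\delta=\frac{5q-6}{2(3q-2)}$ and using a contradiction argument when $\delta<0$. The paper's route is shorter because the induction is packaged in \cref{Lem:recur_sublinear}; your route is more self-contained and, notably, handles the regime $1\leq q<6/5$ (where the exponent $\frac{5q-6}{3q-2}$ is negative and the monotonicity replacement $A_{n+1}^{(5q-6)/(3q-2)}\geq A_n^{(5q-6)/(3q-2)}$ is no longer in the useful direction) without relying on that step. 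Either way the exponent bookkeeping closes via the same identity $r(\tfrac12+\delta)=r-1$, i.e., $\tfrac12+\delta-1=-s$.
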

\begin{proof}
Using~\eqref{A_n_strong}~\eqref{e_strong_opt}, and~\eqref{L_strong_opt}, we have the following:
\begin{equation}
\label{recur_strong_opt1}
(A_{n+1} - A_n)^2 \geq \frac{\mu A_n A_{n+1}}{L_{n+1}} 
\geq \frac{(\theta_n \epsilon_n)^{\frac{2-q}{q}}}{2\kappa}A_n {A_{n+1}} 
= \frac{C^{\frac{2-q}{q}}}{2\kappa} A_n A_{n+1}^{\frac{5q-6}{3q-2}}.
\end{equation}
If $q=2$,~\eqref{recur_strong_opt1} reduces to
\begin{equation*}
(A_{n+1} - A_n)^2 \geq \frac{1}{2\kappa} A_n A_{n+1},
\end{equation*}
so that invoking Lemma~\ref{Lem:recur_linear} yields
\begin{equation}
\label{A_n_bound_strong_opt1}
A_n \geq A_1 \left( 1 + \frac{1}{2^{\frac{3}{2}} \kappa^{\frac{1}{2}}} \right)^{2(n-1)}.
\end{equation}
On the other hand,~\eqref{L_strong_opt} implies that
$
A_1 = \frac{1}{L_1} \geq \frac{1}{2L}.
$
Invoking~\eqref{A_n_bound_strong_opt1} yields~\eqref{A_n_bound_strong_opt}.
Using~\eqref{A_n_bound_strong_opt} and Theorem~\ref{Thm:strong}, we conclude the following:
\begin{equation*} \begin{split}
F(x_n) - F(x^*) &\leq \frac{1}{2A_n} \left( \| x_0 - x^* \|^2 + \sum_{j=1}^n a_j \epsilon_{j-1} \right) \\
&\leq L \left( \| x_0 - x^* \|^2 + Cn \right) \left( 1 + \frac{1}{2^{\frac{3}{2}} \kappa^{\frac{1}{2}}} \right)^{-2(n-1)}.
\end{split} \end{equation*}

Next, we consider the case when $1 \leq q < 2$.
Since $A_{n+1} \geq A_n$, by~\eqref{recur_strong_opt1}, we get
\begin{equation}
\label{recur_strong_opt2}
A_{n+1} - A_n \geq \frac{C^{\frac{2-q}{2q}}}{2^{\frac{1}{2}}\kappa^{\frac{1}{2}}} A_n^{\frac{4q-4}{3q-2}}.
\end{equation}
Applying Lemma~\ref{Lem:recur_sublinear} to~\eqref{recur_strong_opt2} yields
\begin{equation}
\label{A_n_bound_strong_opt2}
A_n \geq \min \left\{ A_1, \left( \frac{C^{\frac{2-q}{2q}}}{2^{\frac{3}{2}}(2^{\frac{3q-2}{2-q}} - 1) \kappa^{\frac{1}{2}}}\right)^{\frac{3q-2}{2-q}} \right\} n^{\frac{3q-2}{2-q}}.
\end{equation}
In order to obtain a lower bound for $A_1$, we combine~\eqref{A_n_strong},~\eqref{e_strong_opt}, and~\eqref{L_strong_opt} as follows:
\begin{equation*}
A_1 = \frac{1}{L_1} \geq \frac{(\theta_0 \epsilon_0)^{\frac{2-q}{q}}}{2L^{\frac{2}{q}}} = \frac{1}{2L^{\frac{2}{q}} A_1^{\frac{2q}{3q-2}}}.
\end{equation*}
That is, we have
$
A_1 \geq \left( \frac{1}{2L^{\frac{2}{q}}}\right)^{\frac{3q-2}{5q-2}}.
$
By~\eqref{A_n_bound_strong_opt2}, we obtain~\eqref{A_n_bound_strong_opt} by setting
\begin{equation*}
\widetilde{C} = \min \left\{ \left( \frac{1}{2L^{\frac{2}{q}}}\right)^{\frac{3q-2}{5q-2}}, \left( \frac{C^{\frac{2-q}{2q}}}{2^{\frac{3}{2}}(2^{\frac{3q-2}{2-q}} - 1) \kappa^{\frac{1}{2}}}\right)^{\frac{3q-2}{2-q}} \right\}.
\end{equation*}
The upper bound for $F(x_n) - F(x^*)$ can be obtained from Theorem~\ref{Thm:strong} and~\eqref{A_n_bound_strong_opt} as follows:
\begin{equation*} \begin{split}
F(x_n) - F(x^*) 
&\leq \frac{1}{2A_n} \left( \| x_0 - x^* \|^2 + \sum_{j=1}^n \frac{C}{A_j^{\frac{2-q}{3q-2}}} \right) \\
&\leq \frac{1}{2\widetilde{C}n^{\frac{3q-2}{2-q}}} \left( \| x_0 - x^* \|^2 + \sum_{j=1}^n \frac{C}{\widetilde{C}^{\frac{2-q}{3q-2}}j} \right) \\
&\leq \frac{1}{2 \widetilde{C}n^{\frac{3q-2}{2-q}}} \left( \|x_0 - x^* \|^2 + \frac{C}{\widetilde{C}^{\frac{2-q}{3q-2}}} \left( 1 + \log n \right) \right),
\end{split}\end{equation*}
where we used an elementary inequality
\begin{equation*}
\sum_{j=1}^n \frac{1}{j} \leq 1 + \log n, \quad n \geq 1
\end{equation*}
in the last step.
\end{proof}

We conclude this section by introducing an alternative choice of the tolerance sequence $\{ \epsilon_n \}_{n \geq 0}$ for Algorithm~\ref{Alg:strong}.
Motivated by the adaptive restart schemes proposed in~\cite{OC:2015}, we choose
\begin{equation}
\label{e_strong_ada}
\displaystyle \epsilon_n = \begin{cases} \epsilon_{n-1}, &  F(\tilde{x}_{n}) \leq F(x_{n-1}), \\
\displaystyle \frac{\epsilon_{n-1}}{2}, & F(\tilde{x}_{n}) > F(x_{n-1}),
\end{cases} \quad n \geq 1
\end{equation}
for some $\epsilon_0 > 0$.
That is, $\epsilon_n$ is halved whenever the gradient descent step is unsatisfactory in the sense that the energy does not decrease.
As stated in~\cite{OC:2015}, such a reduction scheme makes the algorithm to avoid wasted iterations that move away from the optimum.
We will present numerical results of Algorithm~\ref{Alg:strong} with~\eqref{e_strong_opt} and~\eqref{e_strong_ada} in Section~\ref{Sec:Numerical}; they show faster convergence compared to existing ones, and their convergence rates are robust with respect to variation of input parameters.

\section{Extension to uniformly convex problems}
\label{Sec:Uniform}
The goal of this section is to generalize the fast gradient methods introduced in Section~\ref{Sec:Strong} to uniformly convex problems satisfying Assumptions~\ref{Ass:smooth} and~\ref{Ass:convex}.
The main ingredient is Lemma~\ref{Lem:inexact_convex}, an analogy of Lemma~\ref{Lem:inexact_smooth} for uniformly convex functions; see also~\cite[Theorem~3]{DGN:2013}.
We note that a remark similar to Remark~\ref{Rem:inexact_smooth} can be applied to Lemma~\ref{Lem:inexact_convex}.

\begin{lemma}
\label{Lem:inexact_convex}
Under Assumption~\ref{Ass:convex}, for any $\delta > 0$, we have
\begin{equation*}
f(x) \geq f(y) + \left< \nabla f(y) , x-y \right> + \frac{\delta^{\frac{p-2}{p}} \mu^{\frac{2}{p}}}{2} \| x - y \|^2 - \frac{\delta}{2}, \quad x,y \in K_0.
\end{equation*}
\end{lemma}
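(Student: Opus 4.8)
The plan is to reduce the claim to a one-dimensional inequality in the scalar $t = \| x - y \| \ge 0$ and then dispatch it with Young's inequality. By \cref{Ass:convex}, $f$ is $(p,\mu)$-uniformly convex on $K_0$, so for $x,y \in K_0$ we have $f(x) \ge f(y) + \left< \nabla f(y), x-y \right> + \frac{\mu}{p}\| x-y \|^p$. Hence it suffices to establish the purely scalar estimate
\[
\frac{\mu}{p} t^p + \frac{\delta}{2} \ge \frac{\delta^{\frac{p-2}{p}} \mu^{\frac{2}{p}}}{2}\, t^2, \quad t \ge 0,
\]
since adding $f(y) + \left< \nabla f(y), x-y \right> - \frac{\delta}{2}$ to both sides and combining with the uniform convexity inequality yields the assertion. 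If $\mu = 0$ the statement is immediate from convexity, so I may assume $\mu > 0$.

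First I would dispose of the trivial case $p = 2$: there $\frac{p-2}{p} = 0$ and $\frac{2}{p} = 1$, so the right-hand side is $\frac{\mu}{2} t^2$ and the estimate follows at once from $\frac{\mu}{2} t^2 + \frac{\delta}{2} \ge \frac{\mu}{2} t^2$. For $p > 2$ I would apply Young's inequality $uv \le \frac{u^r}{r} + \frac{v^{r'}}{r'}$ with the conjugate exponents $r = \frac{p}{2}$ and $r' = \frac{p}{p-2}$, both exceeding $1$. Taking $u = t^2$ and $v = \left( \frac{\delta}{\mu} \right)^{\frac{p-2}{p}}$ gives
\[
t^2 \left( \frac{\delta}{\mu} \right)^{\frac{p-2}{p}} \le \frac{2}{p} t^p + \frac{p-2}{p}\, \frac{\delta}{\mu}.
\]
Multiplying through by $\frac{\mu}{2}$ and simplifying the exponents of $\delta$ and $\mu$ reduces this to
\[
\frac{\delta^{\frac{p-2}{p}} \mu^{\frac{2}{p}}}{2}\, t^2 \le \frac{\mu}{p} t^p + \frac{p-2}{2p}\,\delta \le \frac{\mu}{p} t^p + \frac{\delta}{2},
\]
where the last step uses $p \ge 2$. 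This is exactly the scalar estimate sought.

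The only delicate point is the bookkeeping of the exponents when selecting $v$; the value $v = \left( \frac{\delta}{\mu} \right)^{\frac{p-2}{p}}$ is forced by the requirement that the coefficient of $t^p$ collapse to precisely $\frac{\mu}{p}$ after the scaling. I do not expect a genuine obstacle, since the argument is a single invocation of Young's inequality, but as an independent check one can instead minimize $g(t) = \frac{\mu}{p} t^p - \frac{\delta^{\frac{p-2}{p}} \mu^{\frac{2}{p}}}{2} t^2 + \frac{\delta}{2}$ over $t \ge 0$: its only interior critical point is $t^* = \left( \frac{\delta}{\mu} \right)^{\frac{1}{p}}$, where a direct computation gives $g(t^*) = \frac{\delta}{p} > 0$, confirming both the inequality and that the slack equals $\frac{\delta}{p}$. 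In particular the sharp constant is $\frac{p-2}{2p}\,\delta$ rather than $\frac{\delta}{2}$, which is the tightening referred to in the remark analogous to \cref{Rem:inexact_smooth}.
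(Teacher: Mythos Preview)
Your proof is correct and follows essentially the same route as the paper: both arguments reduce the claim to the scalar inequality $\frac{\mu}{p}t^p + \frac{\delta}{2} \ge \frac{\delta^{(p-2)/p}\mu^{2/p}}{2}t^2$ and dispatch it with Young's inequality using the conjugate pair $r = p/2$, $r' = p/(p-2)$. The only cosmetic difference is that the paper writes $\frac{\mu}{p}t^p + \frac{\delta}{2}$ directly in the form $\frac{1}{r}a^r + \frac{1}{r'}b^{r'}$ and bounds the resulting product $ab$ from below, whereas you start from the product side with $u = t^2$, $v = (\delta/\mu)^{(p-2)/p}$ and rescale; the two computations are the same Young inequality read in opposite directions. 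Your additional minimization check and the identification of the sharp slack $\frac{p-2}{2p}\delta$ are correct and nicely anticipate the remark analogous to \cref{Rem:inexact_smooth}.
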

\begin{proof}
The statement of the lemma becomes trivial when $p=2$; we deal with the case $p > 2$ only.
Note that $\frac{p}{2}$ and $\frac{p}{p-2}$ are H\"{o}lder conjugates.
It follows by Young's inequality that
\begin{multline*} 
\frac{\mu}{p} \| x - y \|^p + \frac{\delta}{2}
= \frac{2}{p} \left( \left( \frac{\mu}{2} \right)^{\frac{2}{p}} \| x - y \|^2 \right)^{\frac{p}{2}} + \frac{p-2}{p} \left( \left( \frac{p }{2(p-2)} \delta \right)^{\frac{p-2}{p}} \right)^{\frac{p}{p-2}} \\
\geq \left( \frac{\mu}{2} \right)^{\frac{2}{p}} \| x - y \|^2 \cdot \left(\frac{p }{2(p-2)} \delta \right)^{\frac{p-2}{p}} 
\geq \frac{\delta^{\frac{p-2}{p}} \mu^{\frac{2}{p}}}{2} \| x - y \|^2.
\end{multline*}
Combining the above inequality with Assumption~\ref{Ass:convex} completes the proof.
\end{proof}

Invoking Lemma~\ref{Lem:inexact_convex}, one can modify the momentum equation~\eqref{A_n_strong} of Algorithm~\ref{Alg:strong} to be suitable for uniformly convex problems.
The resulting algorithm is summarized in Algorithm~\ref{Alg:uniform}; it presents a novel fast gradient method for~\eqref{model} under Assumptions~\ref{Ass:smooth} and~\ref{Ass:convex}.

\begin{algorithm}[]
\caption{Fast gradient method for~\eqref{model} under Assumptions~\ref{Ass:smooth} and~\ref{Ass:convex} with variable tolerance}
\begin{algorithmic}[]
\State Choose $x_0 \in X$ and $L_0 > 0$.
\State Let $A_0 = 0$ and $\phi_0 (x) = \frac{1}{2} \| x - x_0 \|^2$.

\For{$n=0,1,2,\dots$}
\State \begin{equation*}
\hat{L}_{n+1} \leftarrow L_n / 2, \quad
v_n = \argmin_{x \in X} \phi_n (x)
\end{equation*}

\Repeat
\State Find $a_{n+1} \geq 0$ such that
\begin{equation}
\label{A_n_uniform}
\frac{a_{n+1}^2}{A_n + a_{n+1}} = \frac{1 + \sum_{j=1}^n \delta_{j-1}^{\frac{p-2}{p}} \mu^{\frac{2}{p}} a_j}{\hat{L}_{n+1}}.
\end{equation}
\State Choose $\epsilon_n > 0$, $\delta_n > 0$ and compute $\theta_n$, $y_n$, $z_n$, and $\tx_{n+1}$ by~\eqref{common_subroutine}.

\If {$\displaystyle F(\tx_{n+1}) > \ell_F(\tx_{n+1}; y_n) + \frac{\hat{L}_{n+1}}{2} \| \tx_{n+1} - y_n \|^2 + \frac{\theta_n \epsilon_n}{2}$}
\State \begin{equation*}
\hat{L}_{n+1} \leftarrow 2\hat{L}_{n+1}
\end{equation*}
\EndIf
\Until {$\displaystyle F(\tx_{n+1}) \leq \ell_F(\tx_{n+1}; y_n) + \frac{\hat{L}_{n+1}}{2} \| \tx_{n+1} - y_n \|^2 + \frac{\theta_n\epsilon_n}{2}$}
\State Pick $x_{n+1}$ such that $\displaystyle F(x_{n+1}) \leq \min \left\{ F(\tx_{n+1}), F(x_n) \right\}$.
\begin{gather*} 
L_{n+1} = \hat{L}_{n+1}, \quad 
A_{n+1} = A_n + a_{n+1}, \\
\phi_{n+1}(x) = \phi_{n}(x) + a_{n+1} \left( \ell_F (x; y_n) + \frac{ \delta_{n}^{\frac{p-2}{p}} \mu^{\frac{2}{p}}}{2} \| x - y_n \|^2 - \frac{\delta_n}{2} \right)
\end{gather*} 
\EndFor
\end{algorithmic}
\label{Alg:uniform}
\end{algorithm}

In order to obtain an abstract energy error estimate of Algorithm~\ref{Alg:uniform}, we closely follow~\cite{Nesterov:2013,Nesterov:2015}.
We first prove two properties of the sequence of estimating functions $\{ \phi_n \}_{n \geq 0}$ in Lemmas~\ref{Lem:estimating1} and~\ref{Lem:estimating2}.
Then we utilize them to get the abstract convergence theorem for Algorithm~\ref{Alg:uniform} presented in Theorem~\ref{Thm:uniform}.

\begin{lemma}
\label{Lem:estimating1}
Let $\{ A_n \}$ and $\{ \phi_n \}$ be the sequences generated by Algorithm~\ref{Alg:uniform}.
For any $n \geq$ 0, we have
\begin{equation*}
\phi_n (x) \leq A_n F(x) + \frac{1}{2} \| x - x_0 \|^2 \quad \forall x \in X.
\end{equation*}
\end{lemma}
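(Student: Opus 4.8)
The plan is to prove the estimate by induction on $n$. The base case $n=0$ is immediate: since $A_0 = 0$ and $\phi_0(x) = \frac{1}{2}\|x-x_0\|^2$, the claimed bound reads $\frac{1}{2}\|x-x_0\|^2 \leq \frac{1}{2}\|x-x_0\|^2$, which holds with equality for every $x \in X$.

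For the inductive step I would assume $\phi_n(x) \leq A_n F(x) + \frac{1}{2}\|x-x_0\|^2$ for all $x$ and feed in the recursive definition of $\phi_{n+1}$ from \cref{Alg:uniform},
\[
\phi_{n+1}(x) = \phi_n(x) + a_{n+1}\left(\ell_F(x;y_n) + \frac{\delta_n^{\frac{p-2}{p}}\mu^{\frac{2}{p}}}{2}\|x-y_n\|^2 - \frac{\delta_n}{2}\right).
\]
Substituting the induction hypothesis and recalling $A_{n+1} = A_n + a_{n+1}$, the target inequality reduces (after discarding the common $\frac{1}{2}\|x-x_0\|^2$ and $A_n F(x)$ terms and using $a_{n+1} \geq 0$) to the single-step bound
\[
\ell_F(x;y_n) + \frac{\delta_n^{\frac{p-2}{p}}\mu^{\frac{2}{p}}}{2}\|x-y_n\|^2 - \frac{\delta_n}{2} \leq F(x).
\]
Expanding $\ell_F(x;y_n) = f(y_n) + \left<\nabla f(y_n), x-y_n\right> + g(x)$ and $F(x) = f(x) + g(x)$, the common $g(x)$ cancels, and what remains is precisely the conclusion of \cref{Lem:inexact_convex} applied with $\delta = \delta_n$ and $y = y_n$. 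Thus the inductive step closes purely by invoking the already-established inexact uniform convexity inequality term by term.

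The one place demanding genuine care — and the main obstacle — is the domain on which \cref{Lem:inexact_convex} is valid. That lemma is derived from \cref{Ass:convex}, which only posits $(p,\mu)$-uniform convexity of $f$ on the sublevel set $K_0$; consequently the single-step bound is guaranteed only for $x, y_n \in K_0$. The membership $y_n \in K_0$ must be extracted from the monotonicity built into the algorithm: the rule $F(x_{n+1}) \leq \min\{F(\tx_{n+1}), F(x_n)\}$ yields $F(x_n) \leq F(x_0)$ by induction, hence $x_n \in K_0$, and one then argues that $y_n = (1-\theta_n)x_n + \theta_n v_n$ stays in $K_0$ using convexity of the sublevel set. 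Handling the universal quantifier over all $x \in X$ is the subtlest point, since for $q < 2$ the added quadratic term can dominate $f$ for $x$ far from $K_0$; here one relies on the fact that the bound is trivially satisfied wherever $F(x) = +\infty$ (as $A_n > 0$ for $n \geq 1$) and that the estimate is only ever exploited at points of $K_0$, most notably $x = x^*$. I expect the routine telescoping above to be uncontroversial, with all the real attention concentrated on justifying the applicability of \cref{Lem:inexact_convex} at the iterates $y_n$.
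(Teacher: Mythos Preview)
Your inductive argument is exactly the paper's proof: the base case is immediate from $A_0=0$, and the inductive step unfolds the recursion for $\phi_{n+1}$ and applies \cref{Lem:inexact_convex} with $\delta=\delta_n$, $y=y_n$ to bound the bracketed term by $F(x)$, giving $A_{n+1}F(x)+\tfrac12\|x-x_0\|^2$. The paper's write-up is just these two lines.

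The domain subtlety you flag is real but is \emph{not} addressed in the paper at all: it simply invokes \cref{Lem:inexact_convex} as if it held on all of $X$. As you note, the lemma is only ever used downstream at $x=x^*\in K_0$ (in \cref{Thm:uniform}), so the ``$\forall x\in X$'' is stronger than what is actually needed. Your sketch for $y_n\in K_0$ is incomplete, however: writing $y_n=(1-\theta_n)x_n+\theta_n v_n$ and using convexity of $K_0$ also requires $v_n\in K_0$, and nothing in the algorithm guarantees this. (Your side remark about ``$q<2$'' is also misplaced---the potential failure of the quadratic lower bound outside $K_0$ is governed by the uniform-convexity exponent $p$, not the smoothness exponent $q$.) So you have matched the paper and gone a step further in spotting a gap the paper leaves open, but the patch you propose would itself need more work.
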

\begin{proof}
With Lemma~\ref{Lem:inexact_convex}, the proof can be done similarly to~\cite[Lemma~7]{Nesterov:2013}.
It is obvious in the case $n = 0$.
We assume that the claim holds for some $n \geq 0$.
By Lemma~\ref{Lem:inexact_convex}, we have
\begin{equation*} \begin{split}
\phi_{n+1}(x) &= \phi_n (x) + a_{n+1} \left( \ell_F(x; y_n) + \frac{ \delta_{n}^{\frac{p-2}{p}} \mu^{\frac{2}{p}}}{2} \| x - y_n \|^2 - \frac{\delta_n}{2}\right) \\
&\leq A_n F(x) + \frac{1}{2} \| x - x_0 \|^2 + a_{n+1} F(x) \\
&= A_{n+1} F(x) + \frac{1}{2} \| x - x_0 \|^2.
\end{split} \end{equation*}
The claim is also valid for $n+1$.
By mathematical induction, it is true for all $n \geq 0$.
\end{proof}

\begin{lemma}
\label{Lem:estimating2}
Let $\{ x_n \}$, $\{ v_n \}$, $\{A_n \}$, and $\{ \phi_n \}$ be the sequences generated by Algorithm~\ref{Alg:uniform}.
For any $n \geq 0$, we have
\begin{equation*}
A_n \left( F(x_n ) - \frac{\bepsilon_n + \bdelta_n}{2} \right) \leq \phi_n (v_n ),
\end{equation*}
where $\bepsilon_n = \frac{1}{A_n}\sum_{j=1}^n a_j \epsilon_{j-1}$ and $\bdelta_n = \frac{1}{A_n}\sum_{j=1}^n a_j \delta_{j-1}$.
\end{lemma}
\begin{proof}
The claim obviously holds when $n = 0$.
Assume that it is valid for some $n \geq 0$.
We first note that
\begin{equation}
\label{epsilon_recur}
A_n \bepsilon_n + a_{n+1} \epsilon_n = A_{n+1} \bepsilon_{n+1}, \quad
A_n \bdelta_n + a_{n+1} \delta_n = A_{n+1} \bdelta_{n+1}.
\end{equation}
It is easy to check that $\phi_n$ is strongly convex with parameter $1 + \sum_{j=1}^n \delta_{j-1}^{\frac{p-2}{p}} \mu^{\frac{2}{p}} a_j$.
Since $v_n$ minimizes $\phi_n$, we get
\begin{equation*} \begin{split}
\phi_n (x) &\geq \phi_n (v_n) + \frac{1 + \sum_{j=1}^n \delta_{j-1}^{\frac{p-2}{p}} \mu^{\frac{2}{p}} a_j}{2} \| x - v_n \|^2 \\
&\geq A_n \left( \ell_F (x_n; y_n) - \frac{\bepsilon_n + \bdelta_n}{2} \right) + \frac{1 + \sum_{j=1}^n \delta_{j-1}^{\frac{p-2}{p}} \mu^{\frac{2}{p}} a_j}{2} \|x - v_n \|^2 \\
&\stackrel{\eqref{A_n_uniform}}{=} A_n \left( \ell_F (x_n; y_n) - \frac{\bepsilon_n + \bdelta_n}{2} \right) + \frac{a_{n+1} \theta_n L_{n+1}}{2} \|x - v_n \|^2
\end{split} \end{equation*}
for any $x \in X$.
It follows by the definitions of $\phi_{n+1}$ and $z_n$ that
\begin{equation*}
\resizebox{\linewidth}{!}{$
\begin{split}
\phi_{n+1}&(v_{n+1}) \geq A_n \left( \ell_F (x_n; y_n) - \frac{\bepsilon_n + \bdelta_n}{2} \right) + \frac{a_{n+1} \theta_n L_{n+1}}{2} \|v_{n+1} - v_n \|^2
+ a_{n+1}  \left( \ell_F ( v_{n+1}; y_n) - \frac{\delta_n}{2} \right) \\
&\stackrel{\eqref{epsilon_recur}}{\geq} A_n \left( \ell_F (x_n; y_n) - \frac{\bepsilon_n}{2} \right) + \frac{a_{n+1} \theta_n L_{n+1}}{2} \| z_n - v_n \|^2
 + a_{n+1} \ell_F ( z_n ; y_n) - \frac{A_{n+1} \bdelta_{n+1}}{2} \\
&= \frac{a_{n+1} \theta_n L_{n+1}}{2} \| z_n - v_n \|^2 + A_{n+1}\left( (1 - \theta_n) \ell_F (x_n ; y_n) + \theta_n \ell_F (z_n ; y_n) \right)
- \frac{A_n \bepsilon_n}{2} - \frac{A_{n+1} \bdelta_{n+1}}{2}.
\end{split} $}
\end{equation*}
Noting that $\tx_{n+1} - y_n = \theta_n (z_n - v_n)$ and $\tx_{n+1} = (1-\theta_n) x_n + \theta_n z_n$, we have
\begin{equation*} \begin{split}
\phi_{n+1}(v_{n+1}) &\geq \frac{a_{n+1} L_{n+1}}{2\theta_n} \| \tx_{n+1} - y_n \|^2 + A_{n+1} \ell_F (\tx_{n+1} ; y_n) - \frac{A_n \bepsilon_n}{2} - \frac{A_{n+1} \bdelta_{n+1}}{2}\\
&= A_{n+1} \left( \ell_F (\tx_{n+1};y_n) + \frac{L_{n+1}}{2} \| \tx_{n+1} - y_n \|^2 - \frac{\bdelta_{n+1}}{2} \right) - \frac{A_n \bepsilon_n}{2} \\
&\geq A_{n+1} \left( F(\tx_{n+1}) - \frac{\theta_n \epsilon_{n} + \bdelta_{n+1}}{2} \right) - \frac{A_n \bepsilon_n}{2} \\
&\stackrel{\eqref{epsilon_recur}}{\geq} A_{n+1} \left( F(x_{n+1}) - \frac{\bepsilon_{n+1} + \bdelta_{n+1}}{2} \right),
\end{split} \end{equation*}
where the penultimate inequality is due to the backtracking condition.
Hence, the claim is also valid for $n+1$ and we complete the proof by mathematical induction.
\end{proof}

\begin{theorem}
\label{Thm:uniform}
In Algorithm~\ref{Alg:uniform}, we have
\begin{equation*}
F(x_n ) - F(x^* ) \leq  \frac{\| x_0 - x^* \|^2}{2A_n} + \frac{\bepsilon_n + \bdelta_n}{2}, \quad n \geq 1,
\end{equation*}
where $\bepsilon_n = \frac{1}{A_n}\sum_{j=1}^n a_j \epsilon_{j-1}$ and $\bdelta_n = \frac{1}{A_n}\sum_{j=1}^n a_j \delta_{j-1}$.
\end{theorem}
\begin{proof}
Invoking Lemma~\ref{Lem:estimating1} with $x = x^*$ yields
\begin{equation*}
\phi (x^*) \leq A_n F(x^* ) + \frac{1}{2} \| x_0 - x^* \|^2.
\end{equation*}
By combining the above equation with Lemma~\ref{Lem:estimating2}, we get
\begin{equation*}
F(x_n ) - F(x^* ) \leq  \frac{\| x^* - x_0 \|^2}{2A_n} + \frac{\bepsilon_n + \bdelta_n}{2} ,
\end{equation*}
which completes the proof.
\end{proof}

\begin{remark}
\label{Rem:zero}
If $q = 2$, then Lemma~\ref{Lem:inexact_smooth} holds for $\epsilon = 0$ with the convention $0^0 = 1$.
In this case, one can easily verify that setting $\epsilon_n = 0$ for all $n \geq 0$ in Algorithm~\ref{Alg:uniform} does not alter the result of Theorem~\ref{Thm:uniform}.
Similarly, Lemma~\ref{Lem:inexact_convex} holds for $\delta = 0$ when $p= 2$ and one may set $\delta_n = 0$ for all $n \geq 0$ in Algorithm~\ref{Alg:uniform}.
Indeed, Algorithm~\ref{Alg:uniform} reduces to Algorithm~\ref{Alg:strong} if $p = 2$ and $\delta_n$ is chosen as 0.
\end{remark}

We present some notable choices of the tolerance sequences $\{ \epsilon_n \}_{n \geq 0}$ and $\{ \delta_n \}_{n \geq 0}$.
Similar to Algorithm~\ref{Alg:strong0}, we first consider the case of constant tolerances, i.e., we set
\begin{equation}
\label{e_uniform_const}
\epsilon_n = \delta_n = \frac{\epsilon}{2} > 0, \quad n \geq 0
\end{equation}
for some $\epsilon > 0$.
In the following, we show that Algorithm~\ref{Alg:uniform}-\eqref{e_uniform_const} requires at most $O(\epsilon^{-\frac{2(p-q)}{p(3q-2)}} \log \epsilon^{-1})$ iterations to get an $\epsilon$-solution.
This bound is optimal up to a logarithmic factor under Assumptions~\ref{Ass:smooth} and~\ref{Ass:convex} according to~\cite{NN:1985}.
The overall derivation procedure is similar to the case of Algorithm~\ref{Alg:strong0}.

\begin{lemma}
\label{Lem:backtracking_uniform_const}
In Algorithm~\ref{Alg:uniform}-\eqref{e_uniform_const}, the backtracking process terminates in finite steps.
Moreover, we have
\begin{equation}
\label{L_uniform_const}
L_{n+1} \leq \frac{(2L)^{\frac{2}{q}}}{(\theta_n \epsilon )^{\frac{2-q}{q}}}, \quad n \geq 0.
\end{equation}
\end{lemma}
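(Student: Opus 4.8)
The plan is to mirror the strategy of \cref{Lem:backtracking_strong0,Lem:backtracking_strong_opt}: I would use \cref{Lem:inexact_smooth} to exhibit an explicit threshold for $\hat{L}_{n+1}$ above which the backtracking condition is automatically satisfied, and then bound the accepted value $L_{n+1}$ by at most twice this threshold thanks to the doubling rule. Concretely, applying \cref{Lem:inexact_smooth} with tolerance $\theta_n \epsilon_n = \theta_n \epsilon/2$ to the pair $(\tx_{n+1}, y_n)$ and adding $g(\tx_{n+1})$ to both sides gives
\[
F(\tx_{n+1}) \leq \ell_F(\tx_{n+1}; y_n) + \frac{L^{2/q}}{2(\theta_n \epsilon_n)^{(2-q)/q}} \|\tx_{n+1} - y_n\|^2 + \frac{\theta_n \epsilon_n}{2},
\]
so that whenever $\hat{L}_{n+1} \geq L^{2/q}(\theta_n \epsilon_n)^{-(2-q)/q}$ the acceptance test passes. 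The case $q=2$ is immediate, since the threshold then reduces to the constant $L$.

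The key difference from \cref{Lem:backtracking_strong0} is that the numerator of the momentum equation \cref{A_n_uniform} is now $B_n := 1 + \sum_{j=1}^n \delta_{j-1}^{(p-2)/p}\mu^{2/p} a_j$ rather than $1 + \mu A_n$; since $B_n$ is a constant fixed before the inner loop begins, the relation is still $a_{n+1}^2/(A_n + a_{n+1}) = B_n/\hat{L}_{n+1}$, and I can rerun the argument of \cref{Lem:backtracking_strong_opt}. The main obstacle is that, as $\hat{L}_{n+1} \to \infty$, \cref{A_n_uniform} forces $a_{n+1} \to 0$ and hence $\theta_n \to 0$, so for $1 \leq q < 2$ the threshold itself diverges and I must show $\hat{L}_{n+1}$ overtakes it. Using $\theta_n = B_n/(\hat{L}_{n+1} a_{n+1})$ from \cref{A_n_uniform} and $\epsilon_n = \epsilon/2$, I would rewrite
\[
\hat{L}_{n+1} \frac{(\theta_n \epsilon_n)^{(2-q)/q}}{L^{2/q}} = \frac{(\epsilon/2)^{(2-q)/q} B_n^{(2-q)/q}}{L^{2/q}} \cdot \frac{\hat{L}_{n+1}^{(2q-2)/q}}{a_{n+1}^{(2-q)/q}},
\]
and then invoke $\hat{L}_{n+1} \geq B_n A_n / a_{n+1}^2$ (valid for $n \geq 1$, where $A_n > 0$) to see that the last factor is bounded below by a multiple of $a_{n+1}^{-(3q-2)/q}$; since $3q-2 > 0$ this blows up as $a_{n+1} \to 0$, so the quantity exceeds $1$ once $\hat{L}_{n+1}$ is large enough and the loop terminates. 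The remaining case $n=0$ is trivial because $A_0 = 0$ yields $\theta_0 = 1$ and a constant threshold.

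Finally, for \cref{L_uniform_const} I would argue exactly as in \cref{Lem:backtracking_strong0}. Because halving $\hat{L}_{n+1}$ enlarges $a_{n+1}$ and hence $\theta_n$, while the exponent $-(2-q)/q$ is nonpositive, the rejected trial $L_{n+1}/2$ (carrying some momentum $\theta_n' \geq \theta_n$) must, by the contrapositive of the threshold established above, satisfy $L_{n+1}/2 < L^{2/q}(\theta_n'\epsilon_n)^{-(2-q)/q} \leq L^{2/q}(\theta_n\epsilon_n)^{-(2-q)/q}$. Doubling then gives $L_{n+1} \leq 2L^{2/q}(\theta_n\epsilon_n)^{-(2-q)/q}$, and substituting $\epsilon_n = \epsilon/2$ together with $2 \cdot 2^{(2-q)/q} = 2^{2/q}$ collapses this to \cref{L_uniform_const}. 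I expect the termination step, controlling the competition between the vanishing $\theta_n$ and the growing $\hat{L}_{n+1}$, to be the only genuinely nontrivial part, exactly as in \cref{Lem:backtracking_strong_opt}.
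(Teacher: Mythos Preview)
Your proposal is correct and follows essentially the same approach the paper has in mind: the paper's proof is a one-line reference to \cite[Proposition~A.2]{RD:2020}, and what you have written is precisely the argument that reference contains, adapted to the modified momentum equation~\cref{A_n_uniform} with the constant $B_n$ in place of $1+\mu A_n$. Your observation that $\epsilon_n = \epsilon/2$ is the source of the extra factor $2^{(2-q)/q}$ turning $2L^{2/q}$ into $(2L)^{2/q}$ is exactly the point, and the termination analysis controlling the race between $\hat{L}_{n+1}$ and the diverging threshold is the same computation as in \cref{Lem:backtracking_strong_opt}.
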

\begin{proof}
The proof can be done by the same argument as~\cite[Proposition~A.2]{RD:2020}.
\end{proof}

\begin{theorem}
\label{Thm:uniform_const}
In Algorithm~\ref{Alg:uniform}-\eqref{e_uniform_const}, we have
\begin{equation}
\label{A_n_bound_uniform_const}
A_n \geq \frac{\epsilon^{\frac{2-q}{q}}}{(2L)^{\frac{2}{q}}} \left( 1 + \frac{\epsilon^{\frac{2(p-q)}{p(3q-2)}}}{2^{\frac{2q(2p-1)}{p(3q-2)}} \kappa^{\frac{q}{3q-2}}} \right)^{\frac{3q-2}{q}(n-1)},
\quad n \geq 1,
\end{equation}
where $\kappa$ was given in~\eqref{kappa}.
Consequently, we need at most
\begin{equation*}
 1 + \frac{q}{3q-2} \frac{
\log \left( (2L)^{\frac{2}{q}}  \| x_0 - x^* \|^2 \right) + \frac{2}{q} \log \frac{1}{\epsilon}}{\log \left( 1 + \frac{\epsilon^{\frac{2(p-q)}{p(3q-2)}}}{2^{\frac{2q(2p-1)}{p(3q-2)}} \kappa^{\frac{q}{3q-2}}} \right)} = O \left( \frac{\kappa^{\frac{q}{3q-2}} }{\epsilon^{\frac{2(p-q)}{p(3q-2)}}}  \log \frac{1}{\epsilon} \right)
\end{equation*}
iterations to obtain an $\epsilon$-solution of~\eqref{model}.
\end{theorem}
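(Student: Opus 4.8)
The plan is to mirror the proof of \cref{Thm:strong0}, exploiting the fact that the constant choice~\cref{e_uniform_const} collapses the accumulated uniform-convexity weight in the momentum equation~\cref{A_n_uniform} into a single effective strong-convexity parameter. First I would observe that, since $A_0 = 0$ and $A_j = \sum_{i=1}^j a_i$, substituting $\delta_{j-1} = \frac{\epsilon}{2}$ gives
\begin{equation*}
\sum_{j=1}^n \delta_{j-1}^{\frac{p-2}{p}} \mu^{\frac{2}{p}} a_j = \tilde{\mu} A_n, \qquad \tilde{\mu} := \left( \frac{\epsilon}{2} \right)^{\frac{p-2}{p}} \mu^{\frac{2}{p}},
\end{equation*}
so that~\cref{A_n_uniform} reduces to $\frac{a_{n+1}^2}{A_n + a_{n+1}} = \frac{1 + \tilde{\mu} A_n}{\hat{L}_{n+1}}$, which is formally identical to~\cref{A_n_strong0} with $\mu$ replaced by $\tilde{\mu}$. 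This is the crux: the general uniformly convex algorithm with constant tolerance is, at the level of the momentum recursion, a strongly convex one.

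Next, writing $A_{n+1} - A_n = a_{n+1}$, using $\theta_n = a_{n+1}/A_{n+1}$ together with the step-size bound~\cref{L_uniform_const} from \cref{Lem:backtracking_uniform_const}, I would derive the recurrence exactly as in~\cref{recur_strong0}, namely $(A_{n+1} - A_n)^{\frac{3q-2}{q}} \geq \beta \, A_n A_{n+1}^{\frac{2q-2}{q}}$, where $\beta = \tilde{\mu}\,\epsilon^{\frac{2-q}{q}}/(2L)^{\frac{2}{q}}$. The key algebraic simplification is combining the $\epsilon$-powers: since $\frac{p-2}{p} + \frac{2-q}{q} = \frac{2(p-q)}{pq}$ and $\mu^{\frac{2}{p}} = L^{\frac{2}{q}}/\kappa$ by~\cref{kappa}, this yields $\beta = \epsilon^{\frac{2(p-q)}{pq}} \big/ \big( 2^{\frac{p-2}{p} + \frac{2}{q}} \kappa \big)$. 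Applying \cref{Lem:recur_linear} then produces a geometric lower bound with ratio factor $1 + \frac{1}{2}\beta^{\frac{q}{3q-2}}$, and a bookkeeping identity $\big( \frac{p-2}{p} + \frac{2}{q} \big)\frac{q}{3q-2} + 1 = \frac{2q(2p-1)}{p(3q-2)}$ shows this factor coincides exactly with the one appearing in~\cref{A_n_bound_uniform_const}; the $\epsilon$-exponent matches because $\frac{2(p-q)}{pq}\cdot\frac{q}{3q-2} = \frac{2(p-q)}{p(3q-2)}$.

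For the prefactor I would set $n=0$ in~\cref{A_n_uniform}, where the empty sum vanishes and $\theta_0 = 1$, to obtain $A_1 = 1/L_1 \geq \epsilon^{\frac{2-q}{q}}/(2L)^{\frac{2}{q}}$ via \cref{Lem:backtracking_uniform_const}; combining this with the geometric factor yields~\cref{A_n_bound_uniform_const}. For the complexity estimate, I would first note from \cref{Thm:uniform} and~\cref{bepsilon} that the constant choice~\cref{e_uniform_const} gives $\bepsilon_n = \bdelta_n = \frac{\epsilon}{2}$, whence $F(x_n) - F(x^*) \leq \frac{\| x_0 - x^* \|^2}{2A_n} + \frac{\epsilon}{2}$. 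Requiring $A_n \geq \| x_0 - x^* \|^2 / \epsilon$ and inverting~\cref{A_n_bound_uniform_const} by taking logarithms (noting $1 + \frac{2-q}{q} = \frac{2}{q}$) produces the stated closed-form iteration count, and the $O$-estimate then follows from the elementary inequality~\cref{log_ineq} applied with $t$ equal to the reciprocal of the small quantity $\frac{\epsilon^{2(p-q)/(p(3q-2))}}{2^{2q(2p-1)/(p(3q-2))}\kappa^{q/(3q-2)}}$, exactly as in \cref{Thm:strong0}.

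The main obstacle is purely the constant-tracking in the second step: one must verify that, after the $\epsilon$-exponents combine and the extra factor of $2$ coming from the $\frac{1}{2}$ in the conclusion of \cref{Lem:recur_linear} is absorbed, the power of $2$ in the denominator settles to $\frac{2q(2p-1)}{p(3q-2)}$ and not something off by a fractional power. Everything else is a faithful transcription of the strongly convex argument once $\tilde{\mu}$ is introduced, and the finiteness of the backtracking as well as the step-size estimate~\cref{L_uniform_const} are already supplied by \cref{Lem:backtracking_uniform_const}, so no new termination analysis is required.
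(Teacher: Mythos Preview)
Your proposal is correct and follows essentially the same route as the paper: you reduce the momentum recurrence~\cref{A_n_uniform} under the constant choice~\cref{e_uniform_const} to the strongly-convex form, apply the step-size bound~\cref{L_uniform_const}, invoke \cref{Lem:recur_linear}, and then invert the geometric bound using~\cref{log_ineq}. The only cosmetic difference is that you name the effective parameter $\tilde{\mu} = (\epsilon/2)^{(p-2)/p}\mu^{2/p}$ explicitly, whereas the paper writes the reduced recurrence~\cref{A_n_uniform_const} directly; your constant-tracking (in particular the identity for the power of $2$) is accurate and matches the paper's $2^{1 + 2(p-q)/(pq)}$ exactly.
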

\begin{proof}
This proof closely follows the one of Theorem~\ref{Thm:strong0}.
Under the setting $\epsilon_n = \delta_n = \frac{\epsilon}{2}$, the recurrence inequality~\eqref{A_n_uniform} reduces to
\begin{equation}
\label{A_n_uniform_const}
\frac{(A_{n+1} - A_n)^2}{A_{n+1}} = \frac{1 + \left(\frac{\epsilon}{2} \right)^{\frac{p-2}{p}} \mu^{\frac{2}{p}} A_n}{L_{n+1}}.
\end{equation}
Applying Lemma~\ref{Lem:backtracking_uniform_const} to~\eqref{A_n_uniform_const} that
\begin{equation}
\label{recur_uniform_const}
(A_{n+1} - A_n)^{\frac{3q-2}{q}} \geq \frac{\epsilon^{\frac{2(p-q)}{pq}}}{2^{1 + \frac{2(p-q)}{pq}}\kappa}  A_n A_{n+1}^{\frac{2q-2}{q}}.
\end{equation}
On the other hand, substituting $n=0$ into~\eqref{A_n_uniform_const} yields
$
A_1 = \frac{1}{L_1 } \geq \frac{\epsilon^{\frac{2-q}{q}}}{(2L)^{\frac{2}{q}}}.
$
Combining~\eqref{recur_uniform_const} with Lemma~\ref{Lem:recur_linear}, we get~\eqref{A_n_bound_uniform_const}.

Next, we compute the number of iterations required to obtain an $\epsilon$-solution; Theorem~\ref{Thm:uniform} says that it is enough to solve the following equation with respect to $n$:
\begin{equation*}
\frac{\| x_0 - x^* \|^2}{2} = \frac{\epsilon}{2} \frac{\epsilon^{\frac{2-q}{q}}}{(2L)^{\frac{2}{q}}} \left( 1 + \frac{\epsilon^{\frac{2(p-q)}{p(3q-2)}}}{2^{\frac{2q(2p-1)}{p(3q-2)}} \kappa^{\frac{q}{3q-2}}} \right)^{\frac{3q-2}{q}(n-1)}.
\end{equation*}
The solution of the above equation is given by
\begin{equation*} \begin{split}
n &= 1 + \frac{q}{3q-2} \frac{
\log \left( (2L)^{\frac{2}{q}}  \| x_0 - x^* \|^2 \right) + \frac{2}{q} \log \frac{1}{\epsilon}}{\log \left( 1 + \frac{\epsilon^{\frac{2(p-q)}{p(3q-2)}}}{2^{\frac{2q(2p-1)}{p(3q-2)}} \kappa^{\frac{q}{3q-2}}} \right)} \\
&\leq 1 + \frac{q}{3q-2} \left( \log \left( (2L)^{\frac{2}{q}}  \| x_0 - x^* \|^2 \right) + \frac{2}{q} \log \frac{1}{\epsilon} \right) \left( \frac{1}{2} + \frac{2^{\frac{2q(2p-1)}{p(3q-2)}} \kappa^{\frac{q}{3q-2}}}{\epsilon^{\frac{2(p-q)}{p(3q-2)}}} \right), 
\end{split} \end{equation*}
where we used~\eqref{log_ineq} in the inequality.
\end{proof}

Next, generalizing~\eqref{e_strong_opt}, we choose
\begin{equation}
\label{e_uniform_opt}
\epsilon_n = \frac{C_{\epsilon}}{a_{n+1} (A_n + a_{n+1})^{\frac{2(p-q)}{p(3q-2)}}}, \quad \delta_n = \frac{C_{\delta}}{a_{n+1} (A_n + a_{n+1})^{\frac{2(p-q)}{p(3q-2)}}},
\quad n \geq 0
\end{equation}
for some $C_\epsilon, C_{\delta} > 0$.
Due to very complicated structure of the recurrence relation for $A_n$~(see~\eqref{recur_complex}), we were not able to complete a rigorous convergence analysis in this case.
However, we found a sufficient condition~(see Proposition~\ref{Prop:other}) for almost optimal $O(n^{-\frac{p(3q-2)}{2(p-q)}} \log n)$ convergence of Algorithm~\ref{Alg:uniform}-\eqref{e_uniform_opt}, and verified the condition numerically; relevant results are given in Appendix~\ref{App:Claim}.
The almost optimal convergence of Algorithm~\ref{Alg:uniform}-\eqref{e_uniform_opt} is summarized in Claim~\ref{Claim:uniform_opt}.
We also mention that Claim~\ref{Claim:uniform_opt} can be proven rigorously for some special cases, e.g., when $p=2$.

\begin{claim}
\label{Claim:uniform_opt}
In Algorithm~\ref{Alg:uniform}-\eqref{e_uniform_opt}, it satisfies that
\begin{equation*}
F(x_n) - F(x^*) \leq \begin{cases} \displaystyle O \left( \frac{\log n}{n^{\frac{p(3q-2)}{2(p-q)}}} \right), & p > q, \\
\displaystyle O \left( n \left( 1 + \frac{1}{2^{\frac{3}{2}} \kappa^{\frac{1}{2}}} \right)^{-2n}\right) , & p=q, \end{cases}
\quad n \geq 1.
\end{equation*}
\end{claim}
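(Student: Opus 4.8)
The plan is to combine the abstract estimate of \cref{Thm:uniform} with a growth lower bound for $A_n$, exactly as in the proof of \cref{Thm:strong_opt}. Once a lower bound is available, the averaged tolerances are easy to control: substituting \cref{e_uniform_opt} into \cref{bepsilon} and cancelling the factor $a_j$ gives
\begin{equation*}
\bepsilon_n = \frac{C_{\epsilon}}{A_n} \sum_{j=1}^n A_j^{-\frac{2(p-q)}{p(3q-2)}}, \qquad
\bdelta_n = \frac{C_{\delta}}{A_n} \sum_{j=1}^n A_j^{-\frac{2(p-q)}{p(3q-2)}}.
\end{equation*}
Writing $\alpha = \frac{p(3q-2)}{2(p-q)}$ so that $\frac{2(p-q)}{p(3q-2)} = \frac{1}{\alpha}$, a polynomial bound $A_j \geq \widetilde{C} j^{\alpha}$ yields $A_j^{-1/\alpha} \leq \widetilde{C}^{-1/\alpha} j^{-1}$, and the harmonic-sum inequality $\sum_{j=1}^n \frac{1}{j} \leq 1 + \log n$ already used in \cref{Thm:strong_opt} produces $\bepsilon_n + \bdelta_n = O(A_n^{-1} \log n)$. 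Together with the leading term $\frac{\| x_0 - x^* \|^2}{2A_n}$ this gives the claimed $O(n^{-\alpha} \log n)$ rate for $p>q$. Thus the whole difficulty is concentrated in the lower bound for $A_n$.

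For the case $p=q$, I would first observe that \cref{Ass:smooth,Ass:convex} force $p=q=2$, whence the exponent $\frac{2(p-q)}{p(3q-2)}$ vanishes, the tolerances reduce to $\epsilon_n = C_{\epsilon}/a_{n+1}$ and $\delta_n = C_{\delta}/a_{n+1}$, and the momentum equation \cref{A_n_uniform} collapses to the strongly convex recurrence $\frac{(A_{n+1}-A_n)^2}{A_{n+1}} = \frac{1+\mu A_n}{L_{n+1}}$ treated in the $q=2$ branch of \cref{Thm:strong_opt}. Using the backtracking bound from \cref{Lem:inexact_smooth} (here $L_{n+1} \leq 2L$) and $\kappa = L/\mu$, one obtains $(A_{n+1}-A_n)^2 \geq \frac{1}{2\kappa} A_n A_{n+1}$, and \cref{Lem:recur_linear} delivers the geometric growth $A_n \geq A_1 ( 1 + 2^{-3/2}\kappa^{-1/2})^{2(n-1)}$. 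Since $\bepsilon_n = C_{\epsilon} n/A_n$ and $\bdelta_n = C_{\delta} n/A_n$ here, \cref{Thm:uniform} immediately yields the stated $O(n(1+2^{-3/2}\kappa^{-1/2})^{-2n})$ bound. This branch is fully rigorous.

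The genuinely hard branch is $p>q$. Here the plan is to turn the second-order recurrence \cref{A_n_uniform} into a first-order one of the form $A_{n+1}-A_n \geq c\, A_n^{1-1/\alpha}$ and then invoke \cref{Lem:recur_sublinear} to extract $A_n \geq \widetilde{C} n^{\alpha}$, mirroring \cref{recur_strong_opt2} (indeed $1-1/\alpha$ specializes to $\frac{4q-4}{3q-2}$ when $p=2$). Feeding the backtracking estimate $L_{n+1} \leq 2L^{\frac{2}{q}}(\theta_n \epsilon_n)^{-\frac{2-q}{q}}$ and the identity $\theta_n \epsilon_n = C_{\epsilon} A_{n+1}^{-\frac{q(3p-2)}{p(3q-2)}}$ into \cref{A_n_uniform} reduces matters to lower-bounding the history-dependent factor $1+S_n$, where $S_n = \mu^{\frac{2}{p}} C_{\delta}^{\frac{p-2}{p}} \sum_{j=1}^n a_j^{\frac{2}{p}} A_j^{-\frac{2(p-q)(p-2)}{p^2(3q-2)}}$ is the weighted sum carrying the uniform-convexity information. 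A short self-consistency computation shows that the target law $A_n \sim n^{\alpha}$ forces $S_n \sim A_n^{2/p}$ — exactly the growth $1+S_n \gtrsim A_{n+1}^{2/p}$ needed to upgrade the exponent from its purely weakly smooth value to $\alpha$ — because $\frac{2(p-q)(p-2)}{p^2(3q-2)} \cdot \alpha = \frac{p-2}{p}$.

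The main obstacle, and the reason this statement is recorded as a claim rather than a theorem, is that establishing $1+S_n \gtrsim A_{n+1}^{2/p}$ is circular: the bound on $S_n$ presupposes the very power-law growth of $\{ A_j \}$ that it is meant to produce, and the weighted sum with the fractional exponent $\frac{p-2}{p}$ admits no usable closed form for general $p$. The natural remedy is a bootstrap induction, but closing it uniformly in $p$ and $q$ appears delicate; instead I would isolate $1+S_n \gtrsim A_{n+1}^{2/p}$ as an explicit sufficient condition (\cref{Prop:other}) and verify it numerically, as in \cref{App:Claim}. When $p=2$ the exponent $\frac{p-2}{p}$ is zero, so $S_n = \mu A_n$ exactly, the recurrence \cref{recur_complex} localizes, and the bootstrap closes rigorously; this is the special case in which the claim becomes a genuine theorem.
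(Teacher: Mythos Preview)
Your proposal is correct and follows the same overall strategy as the paper: invoke \cref{Thm:uniform}, reduce everything to a lower bound on $A_n$, handle $p=q\,(=2)$ rigorously via \cref{Lem:recur_linear}, and for $p>q$ isolate an unproven sufficient condition to be checked numerically, with the $p=2$ subcase closing rigorously.

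The one point where the paper differs from your write-up is the \emph{formulation} of that sufficient condition. You phrase it as the growth bound $1+S_n \gtrsim A_{n+1}^{2/p}$ on the history sum; as you yourself note, this is essentially a restatement of the target growth and is hard to test independently of a particular problem instance. The paper instead first derives rigorously (in \cref{Prop:other}) the full recurrence
\[
(A_{n+1}-A_n)^2 \;\geq\; C\, A_n^{\,1-\frac{2-q}{q}\left(1+\frac{2(p-q)}{p(3q-2)}\right)} \sum_{j=1}^n \frac{(A_j-A_{j-1})^{2/p}}{A_j^{\frac{p-2}{p}\cdot\frac{2(p-q)}{p(3q-2)}}},
\]
and then states as \cref{Claim:recur} the purely analytic assertion that any positive increasing sequence obeying this inequality must satisfy $A_n \geq \widetilde{C}\, n^{\frac{p(3q-2)}{2(p-q)}}$. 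This packaging has the advantage of being a self-contained statement about sequences, so it can be tested numerically simply by iterating the recurrence with equality (\cref{A_n_exp}), independent of any optimization run. Apart from this difference in how the missing step is isolated, your argument and the paper's coincide.
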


Finally, we present an adaptive choice of the tolerance sequences with respect to energy decrease:
\begin{equation}
\label{e_uniform_ada}
\displaystyle \epsilon_n = \begin{cases} \epsilon_{n-1}, &  F(\tilde{x}_{n}) \leq F(x_{n-1}), \\
\displaystyle \frac{\epsilon_{n-1}}{2}, & F(\tilde{x}_{n}) > F(x_{n-1}),
\end{cases}
\quad
\displaystyle \delta_n = \begin{cases} \delta_{n-1}, &  F(\tilde{x}_{n}) \leq F(x_{n-1}), \\
\displaystyle \frac{\delta_{n-1}}{2}, & F(\tilde{x}_{n}) > F(x_{n-1}),
\end{cases}
 \quad n \geq 1
\end{equation}
for some $\epsilon_0, \delta_0 > 0$.
It is clear that~\eqref{e_uniform_ada} generalizes~\eqref{e_strong_ada}.
Numerical results of Algorithm~\ref{Alg:uniform} with various choices of the tolerance sequences as~\eqref{e_uniform_const},~\eqref{e_uniform_opt}, and~\eqref{e_uniform_ada}, and relevant discussions will be provided in Section~\ref{Sec:Numerical}.

\section{Numerical results}
\label{Sec:Numerical}
In this section, we conduct numerical experiments of the fast gradient methods introduced in this paper applied to a finite element approximation of the $s$-Laplacian problem~\eqref{sLap}~\cite{Ciarlet:2002}.
All the algorithms were implemented in MATLAB R2020b, and executed on a desktop equipped with Intel Core i5-8600K CPU~(3.60GHz), 32GB RAM, and Windows~10 Pro 64-bit OS.

In~\eqref{sLap}, we set $\Omega = [0,1]^2 \subset \mathbb{R}^2$.
The domain $\Omega$ is partitioned into $2 \times 1/h \times 1/h$ uniform triangles to form a triangulation $\mathcal{T}_h$ of $\Omega$.
The piecewise linear and continuous finite element space on $\mathcal{T}_h$ is denoted by $S_h$.
A conforming finite element discretization of~\eqref{sLap} using $S_h \subset W_0^{1,s} (\Omega)$ is written as
\begin{equation}
\label{sLap_FEM}
\min_{u \in S_h } \left\{F(u) := \frac{1}{s} \int_{\Omega} \lvert \nabla u \rvert^s \,dx - \int_{\Omega} bu \,dx \right\}.
\end{equation}
The problem~\eqref{sLap_FEM} is represented in terms of~\eqref{model} with
\begin{equation*}
X = S_h, \quad f(u) = \frac{1}{s} \int_{\Omega} \lvert \nabla u \rvert^s \,dx - \int_{\Omega} bu \,dx, \quad g(u) = 0.
\end{equation*}
It is well-known that~(see, e.g.,~\cite{Ciarlet:2002,Park:2020}) the energy function $F$ satisfies Assumptions~\ref{Ass:smooth} and~\ref{Ass:convex} with $q = \min \left\{2, s \right\}$ and $p = \max \left\{2, s \right\}$, respectively.

\begin{figure}[]
\centering
\subfloat[][Tolerance~\eqref{e_strong_opt}, varying $C$]{ \includegraphics[width=0.45\linewidth]{./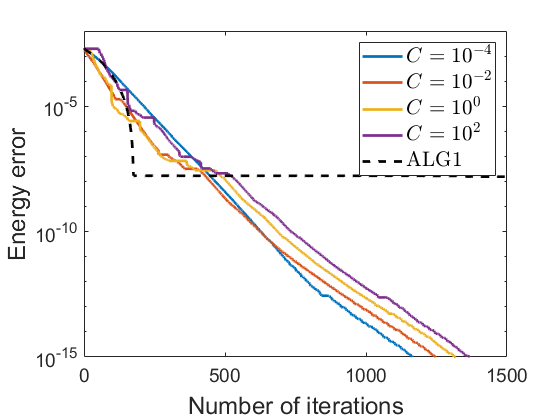} 
\label{Fig:strong(a)}}
\subfloat[][Tolerance~\eqref{e_strong_ada}, varying $\epsilon_0$]{ \includegraphics[width=0.45\linewidth]{./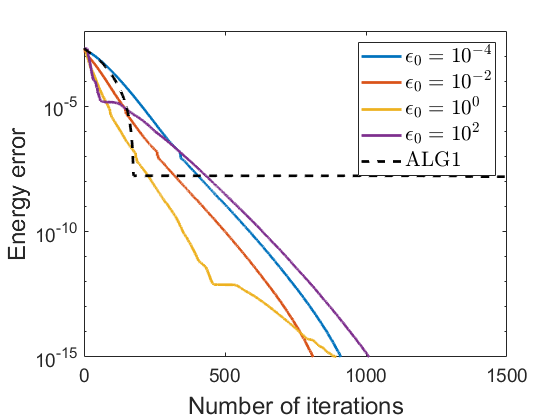} 
\label{Fig:strong(b)}}
\caption{Decay of the energy error $F(x_n) - F(x^*)$ of Algorithm~\ref{Alg:strong}~($x_0 = 0$, $L_0 = 1$) for~\eqref{sLap_FEM}~($s=1.5$, $b=1$, $h=2^{-5}$) equipped with the tolerance sequences \textbf{(a)}~\eqref{e_strong_opt} with respect to various $C$ and \textbf{(b)}~\eqref{e_strong_ada} with respect to various $\epsilon_0$.
The black dashed line refers to Algorithm~\ref{Alg:original}~($x_0 = 0$, $L_0 = 1$, $\epsilon = 10^{-10}$).}
\label{Fig:strong}
\end{figure}

\begin{figure}[]
\centering
\includegraphics[width=0.45\linewidth]{./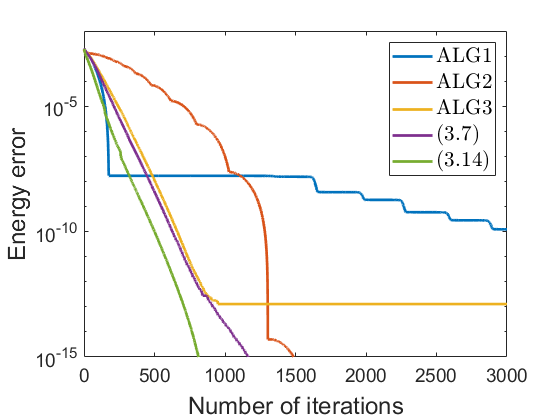} 
\caption{Comparison of various fast gradient methods for~\eqref{sLap_FEM}~($s=1.5$, $b=1$, $h=2^{-5}$).
For all algorithms we set $x_0 = 0$ and $L_0 = 1$.
Five curves refer to Algorithm~\ref{Alg:original}~($\epsilon = 10^{-10}$), Algorithm~\ref{Alg:scheduled}~($\epsilon_0 = e^{-\frac{3q-2}{2}} \left( F(x_0) - F(x^*) \right)$, $C = 2$), Algorithm~\ref{Alg:strong0}~($\epsilon = 10^{-10}$), Algorithm~\ref{Alg:strong}-\eqref{e_strong_opt}~($C=10^{-4}$), and Algorithm~\ref{Alg:strong}-\eqref{e_strong_ada}~($\epsilon_0 = 10^{-2}$), respectively.}
\label{Fig:comp_strong}
\end{figure}

In all numerical experiments of the algorithms for strongly convex problems, we set $s = 1.5$, $b = 1$, and $h = 2^{-5}$ in~\eqref{sLap_FEM}, so that we have $p = 2$ and $q=1.5$.
A reference solution $u^*$ of~\eqref{sLap_FEM} was computed by $10^6$ iterations of Algorithm~\ref{Alg:original} with $\epsilon = 10^{-24}$.
Initial guesses for the solution $u_0$ and the descent step size $L_0$ were chosen as $u_0 = 0$ and $L_0 = 1$, respectively.
The strong convexity parameter $\mu$ in Assumption~\ref{Ass:strong} was chosen heuristically as $0.046$.
The same setting was used for the numerical experiments corresponding to Figures~\ref{Fig:scheduled} and~\ref{Fig:strong0}.

Figure~\ref{Fig:strong} shows the performance of Algorithm~\ref{Alg:strong} for~\eqref{sLap_FEM} with $s=1.5$ under various conditions.
Decay of the energy error $F(x_n) - F(x^*)$ of Algorithm~\ref{Alg:strong} with the tolerance sequence~\eqref{e_strong_opt} with respect to $C=10^{-4}$, $10^{-2}$, $10^0$, and $10^2$.
For all cases, one readily observes that Algorithm~\ref{Alg:strong}-\eqref{e_strong_opt} outperforms Algorithm~\ref{Alg:original}.
Moreover, the convergence rate of Algorithm~\ref{Alg:strong}-\eqref{e_strong_opt} seems almost constant for all values of $C$.
It means that Algorithm~\ref{Alg:strong}-\eqref{e_strong_opt} is robust with respect to a choice of $C$.
We note that, although the smaller $C$ results the faster convergence rate, it additionally requires $O(\log C^{-1})$ calls of oracle in the backtracking process in general~(cf.~\cite{Nesterov:2015}).
On the other hand, Figure~\ref{Fig:strong(b)} shows the convergence behavior of Algorithm~\ref{Alg:strong} with the tolerance sequence~\eqref{e_strong_ada} for $\epsilon_0 = 10^{-4}$, $10^{-2}$, $10^0$, and $10^2$.
Similar to the case of~\eqref{e_strong_opt}, Algorithm~\ref{Alg:strong}-\eqref{e_strong_ada} shows the improved and robust convergence behavior with respect to variation of $\epsilon_0$.

Convergence curves of various fast gradient methods for~\eqref{sLap_FEM} with $s=1.5$ are depicted in Figure~\ref{Fig:comp_strong}.
The following five algorithms are considered: Algorithm~\ref{Alg:original}~($\epsilon = 10^{-10}$), Algorithm~\ref{Alg:scheduled}~($\epsilon_0 = e^{-\frac{3q-2}{2}} \left( F(x_0) - F(x^*) \right)$, $C = 2$), Algorithm~\ref{Alg:strong0}~($\epsilon = 10^{-10}$), Algorithm~\ref{Alg:strong}-\eqref{e_strong_opt}~($C=10^{-4}$), and Algorithm~\ref{Alg:strong}-\eqref{e_strong_ada}~($\epsilon_0 = 10^{-2}$), while we used $x_0 = 0$ and $L_0 = 1$ for all algorithms.
We can observe that Algorithm~\ref{Alg:strong} with either~\eqref{e_strong_opt} or~\eqref{e_strong_ada} outperforms the other ones in the sense of energy convergence.
Algorithm~\ref{Alg:strong0} converges as fast as Algorithm~\ref{Alg:strong}-\eqref{e_strong_opt} until first 1000 iterations but then its energy error does not decrease anymore, reflecting Theorem~\ref{Thm:strong0}.
It is remarkable that Algorithm~\ref{Alg:strong}-\eqref{e_strong_opt} converges faster than Algorithm~\ref{Alg:scheduled} although the theoretical convergence rate in Theorem~\ref{Thm:strong_opt} lacks a logarithmic factor compared to Proposition~\ref{Prop:scheduled}.
Between two tolerance sequences~\eqref{e_strong_opt} and~\eqref{e_strong_ada},~\eqref{e_strong_ada} shows faster convergence than the other.
On the other hand,~\eqref{e_strong_opt} has an advantage that its convergence is theoretically guaranteed by Theorem~\ref{Thm:strong_opt}.

\begin{figure}[]
\centering
\subfloat[][Tolerance~\eqref{e_uniform_opt}, $C_{\epsilon} = 0$, varying $C_{\delta}$]{ \includegraphics[width=0.45\linewidth]{./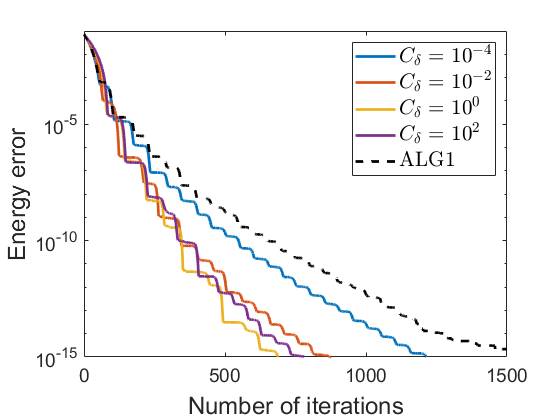} 
\label{Fig:uniform(a)}}
\subfloat[][Tolerance~\eqref{e_uniform_ada}, $\epsilon_0 = 0$, varying $\delta_0$]{ \includegraphics[width=0.45\linewidth]{./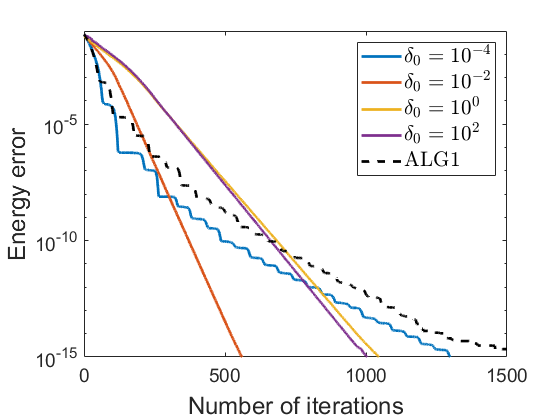} 
\label{Fig:uniform(b)}}
\caption{Decay of the energy error $F(x_n) - F(x^*)$ of Algorithm~\ref{Alg:uniform}~($x_0 = 0$, $L_0 =1$) for~\eqref{sLap_FEM}~($s=4$, $b=1$, $h=2^{-5}$) equipped with the tolerance sequences \textbf{(a)}~\eqref{e_uniform_opt} with respect to various $C_{\delta}$ and \textbf{(b)}~\eqref{e_uniform_ada} with respect to various $\delta_0$.
The black dashed line refers to Algorithm~\ref{Alg:original}~($x_0 = 0$, $L_0 = 1$, $\epsilon = 10^{-10}$).}
\label{Fig:uniform}
\end{figure}

\begin{figure}[]
\centering
\includegraphics[width=0.45\linewidth]{./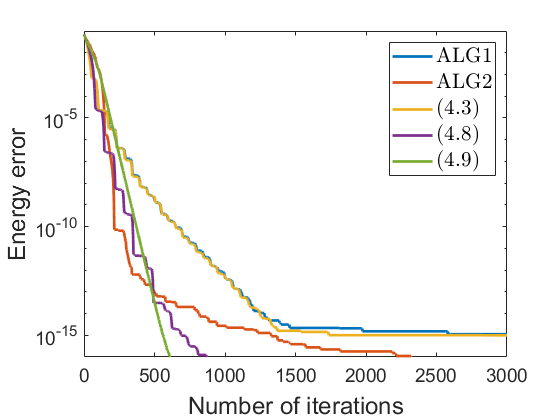} 
\caption{Comparison of various fast gradient methods for~\eqref{sLap_FEM}~($s=4$, $b=1$, $h=2^{-5}$).
For all algorithms we set $x_0 = 0$ and $L_0 = 1$.
Five curves refer to Algorithm~\ref{Alg:original}~($\epsilon = 10^{-10}$), Algorithm~\ref{Alg:scheduled}~($\epsilon_0 = e^{-\frac{3q-2}{2}} \left( F(x_0) - F(x^*) \right)$, $C = 2$), Algorithm~\ref{Alg:uniform}-\eqref{e_uniform_const}~($\epsilon = 10^{-10}$), Algorithm~\ref{Alg:uniform}-\eqref{e_uniform_opt}~($C_{\epsilon} = 0$, $C_{\delta}=10^{0}$), and Algorithm~\ref{Alg:uniform}-\eqref{e_uniform_ada}~($\epsilon_0 = 0$, $\delta_0 = 10^{-2}$), respectively.}
\label{Fig:comp_uniform}
\end{figure}

Next, we present numerical results of the algorithms for uniformly convex problems.
We set $s=4$, $b=1$, and $h=2^{-5}$ in~\eqref{sLap_FEM}, so that $p=1.5$ and $q=2$.
Similar to the case of strongly convex problems, we obtained a reference solution $u^*$ of~\eqref{sLap_FEM} by $10^6$ iterations of Algorithm~\ref{Alg:original} with $\epsilon = 10^{-24}$.
We also set $u_0 = 0$ and $L_0 = 1$.
The uniform convexity parameter $\mu$ in Assumption~\ref{Ass:convex} was given by $0.124$.

Figure~\ref{Fig:uniform} shows the performance of Algorithm~\ref{Alg:uniform} for~\eqref{sLap_FEM} with $s=4$ under various conditions.
As shown in Figures~\ref{Fig:uniform(a)} and~\ref{Fig:uniform(b)}, Algorithm~\ref{Alg:uniform}-\eqref{e_uniform_opt} and Algorithm~\ref{Alg:uniform}-\eqref{e_uniform_ada} outperform Algorithm~\ref{Alg:original} for all choices of $C_{\delta}$ and $\delta_0$, respectively.
The optimal value of $C_{\delta}$ for Algorithm~\ref{Alg:uniform}-\eqref{e_uniform_opt} was found as $10^0$, and the optimal value of $\delta_0$ for Algorithm~\ref{Alg:uniform}-\eqref{e_uniform_ada} was found as $10^{-2}$.
Compared to the case of strongly convex functions, the convergence rate of Algorithm~\ref{Alg:uniform} is relatively sensitive with respect to its input parameters either $C_{\delta}$ or $\delta_0$.
It is because that a choice of the tolerance sequence $\{ \delta_n \}_{n \geq 0}$ directly affects the momentum equation~\eqref{A_n_uniform}.

In Figure~\ref{Fig:comp_uniform}, we compare various fast gradient methods for~\eqref{sLap_FEM} with $s=4$: Algorithm~\ref{Alg:original}~($\epsilon = 10^{-10}$), Algorithm~\ref{Alg:scheduled}~($\epsilon_0 = e^{-\frac{3q-2}{2}} \left( F(x_0) - F(x^*) \right)$, $C = 2$), Algorithm~\ref{Alg:uniform}-\eqref{e_uniform_const}~($\epsilon = 10^{-10}$), Algorithm~\ref{Alg:uniform}-\eqref{e_uniform_opt}~($C_{\epsilon} = 0$, $C_{\delta} = 10^0$), and Algorithm~\ref{Alg:uniform}-\eqref{e_uniform_ada}~($\epsilon_0 = 0$, $\delta_0 = 10^{-2}$).
The performance of Algorithm~\ref{Alg:uniform}-\eqref{e_uniform_const} seems almost incomparable to that of Algorithm~\ref{Alg:original} despite of its almost optimal convergence property presented in Theorem~\ref{Thm:uniform_const}.
One can deduce that the convergence behavior of Algorithm~\ref{Alg:uniform}-\eqref{e_uniform_const} is barely affected by small $\delta$.
Meanwhile, Algorithm~\ref{Alg:uniform} with either~\eqref{e_uniform_opt} or~\eqref{e_uniform_ada} shows superior convergence compared to the other ones containing Algorithm~\ref{Alg:scheduled}, whose optimal convergence is guaranteed by Proposition~\ref{Prop:scheduled}.
We conclude that choosing suitable tolerance sequences in either Algorithm~\ref{Alg:strong} or Algorithm~\ref{Alg:uniform} yields very fast and efficient algorithms to solve uniformly convex and weakly smooth problems.

\section{Conclusion}
\label{Sec:Conclusion}
In this paper, we proposed novel fast gradient methods for uniformly convex and weakly smooth problems.
With suitably chosen tolerance sequences, the proposed methods showed optimal convergence rates up to logarithmic factors.
Faster convergence of the proposed methods compared to state-of-the-art methods such as the universal fast gradient method~\cite{Nesterov:2015} and its restarted variant~\cite{RD:2020} were shown by numerical experiments.

This paper leaves several important topics for future research.
A major drawback of the proposed methods is that a prior information on the uniform convexity parameters $p$ and $\mu$ in Assumption~\ref{Ass:convex} is required.
Although such a drawback also appears in almost of the existing works on momentum acceleration for strongly convex problems~\cite{CC:2019,CP:2016,Nesterov:2018}, it must be resolved in order for successful applications of the methods to various real-world problems.
For instance, one may try to design a suitable adaptive scheme to the unknown parameters $p$ and $\mu$ in a similar manner as~\cite{FQ:2019,Nesterov:2013}.
Meanwhile, we note that the choices of tolerance sequences such as~\eqref{e_strong_opt},~\eqref{e_strong_ada},~\eqref{e_uniform_opt}, and~\eqref{e_uniform_ada} are not optimized in the sense that there may exist other choices of tolerance parameters whose performances are better than them.
We expect that if we can optimize tolerance sequences in some sense, then we can obtain improved variants of proposed methods.
Such an idea of optimizing acceleration schemes was considered in several existing works~\cite{DT:2014,KF:2016,KF:2018}.

\backmatter

\bmhead{Acknowledgments}
This work was initially started with the help of Professor Donghwan Kim through meetings on acceleration schemes for first-order methods.
The author would like to thank him for his insightful comments and assistance.

\section*{Declarations}
\begin{itemize}
\item Funding: This work was supported by Basic Science Research Program through the National Research Foundation of Korea~(NRF) funded by the Ministry of Education~(2019R1A6A1A10073887).
\item Conflict of interest/Competing interests: The author declares that he has no conflicts of interest/competing interests.
\item Availability of data and materials: All data generated or analyzed during this study are included in this published article.
\item Code availability: The source code used to generate all data for the current study is available from the corresponding author upon request.
\end{itemize}

\begin{appendices}

\section{Recurrence inequalities}
\label{App:Ineq}
This appendix presents several useful recurrence inequalities used throughout this paper and their proofs.
Motivated by~\cite[Lemma~8]{Nesterov:2013} and~\cite[Theorem~3]{Nesterov:2015}, we state the following useful lemma.

\begin{lemma}
\label{Lem:recur_linear}
Let $\{ A_n \}_{n \geq 1}$ be an increasing sequence of positive real numbers that satisfies
\begin{equation*}
(A_{n+1} - A_{n})^{\gamma} \geq C A_{n}A_{n+1}^{\gamma - 1}, \quad n \geq1
\end{equation*}
for some $1\leq \gamma \leq 2$ and $C > 0$.
Then we have
\begin{equation*}
A_{n} \geq A_1 \left( 1+ \frac{C^{1/\gamma}}{2} \right)^{\gamma(n-1)}, \quad n \geq 1.
\end{equation*}
\end{lemma}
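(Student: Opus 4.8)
The plan is to reduce the claimed lower bound to a single per-step multiplicative estimate of the form $A_{n+1} \geq (1 + C^{1/\gamma}/2)^{\gamma} A_n$, after which the conclusion follows immediately by iterating from $A_1$. First I would take $\gamma$-th roots of the hypothesis to obtain $A_{n+1} - A_n \geq C^{1/\gamma} A_n^{1/\gamma} A_{n+1}^{(\gamma-1)/\gamma}$, which is legitimate because all the terms involved are positive. Dividing through by $A_n$ and introducing the abbreviations $c = C^{1/\gamma}$ and $s_n = (A_{n+1}/A_n)^{1/\gamma} \geq 1$ (the inequality $s_n \geq 1$ coming from the assumption that $\{A_n\}$ is increasing and positive), this recasts the recurrence as the scalar relation $s_n^{\gamma} - 1 \geq c\, s_n^{\gamma-1}$.

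The heart of the argument is then a purely scalar claim: if $s \geq 1$ satisfies $s^{\gamma} - 1 \geq c\, s^{\gamma-1}$ for some $1 \leq \gamma \leq 2$ and $c > 0$, then $s \geq 1 + c/2$. Rearranging the hypothesis gives $c \leq (s^{\gamma}-1)/s^{\gamma-1} = s - s^{1-\gamma}$, so it suffices to establish the auxiliary inequality $s - s^{1-\gamma} \leq 2(s-1)$, equivalently $s^{1-\gamma} \geq 2 - s$, for every $s \geq 1$. I expect this auxiliary inequality to be the main obstacle, but it should yield cleanly to a convexity argument: since $1 - \gamma \leq 0$, the map $s \mapsto s^{1-\gamma}$ is convex on $(0,\infty)$, hence lies above its tangent line at $s = 1$, giving $s^{1-\gamma} \geq 1 + (1-\gamma)(s-1)$. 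Because $1 - \gamma \geq -1$, the right-hand side is itself at least $1 - (s-1) = 2 - s$, the gap between the two being $(2-\gamma)(s-1) \geq 0$. This proves $s^{1-\gamma} \geq 2 - s$, whence $c \leq 2(s-1)$ and therefore $s \geq 1 + c/2$.

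Applying the scalar claim to each $s_n$ yields $s_n \geq 1 + c/2$, that is, $A_{n+1} \geq (1 + C^{1/\gamma}/2)^{\gamma} A_n$ for all $n \geq 1$. I would then iterate this estimate starting from $A_1$ to obtain $A_n \geq A_1 (1 + C^{1/\gamma}/2)^{\gamma(n-1)}$, which is exactly the assertion. The only points requiring care are the validity of the divisions and of the $\gamma$-th root extraction, all of which are justified by the standing hypotheses that the $A_n$ are positive and strictly increasing.
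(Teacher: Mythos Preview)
Your proof is correct and reaches the same per-step estimate $A_{n+1}\ge(1+C^{1/\gamma}/2)^{\gamma}A_n$ as the paper, but the route to that estimate differs. The paper works directly with $A_{n+1}^{1/\gamma}-A_n^{1/\gamma}$: it uses the algebraic factorization
\[
A_{n+1}-A_n \le \bigl(A_{n+1}^{1-1/\gamma}+A_n^{1-1/\gamma}\bigr)\bigl(A_{n+1}^{1/\gamma}-A_n^{1/\gamma}\bigr),
\]
bounds the first factor by $2A_{n+1}^{1-1/\gamma}$, raises to the $\gamma$-th power, and cancels $A_{n+1}^{\gamma-1}$ against the hypothesis to get $A_{n+1}^{1/\gamma}-A_n^{1/\gamma}\ge \tfrac{1}{2}C^{1/\gamma}A_n^{1/\gamma}$. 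You instead take $\gamma$-th roots first, pass to the ratio $s=(A_{n+1}/A_n)^{1/\gamma}$, and isolate the scalar inequality $s^{1-\gamma}\ge 2-s$, which you dispatch cleanly with the tangent-line bound for the convex map $s\mapsto s^{1-\gamma}$. Your argument is arguably more transparent about where the restriction $1\le\gamma\le2$ enters (it is exactly what makes both the convexity and the bound $1-\gamma\ge -1$ hold), while the paper's factorization is a quick one-line trick once you see it; both yield the identical constant in the final bound.
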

\begin{proof}
Take any $n \geq 1$.
Since $1/2 \leq 1/\gamma \leq 1$, the following inequality holds:
\begin{equation*}
A_{n+1} - A_{n} \leq ( A_{n+1}^{1- 1/\gamma}  + A_{n}^{1- 1/\gamma} ) ( A_{n+1}^{1/\gamma}  - A_{n}^{1/\gamma} ) .
\end{equation*}
It follows that
\begin{equation*} 
C A_n A_{n+1}^{\gamma - 1} 
\leq ( A_{n+1}^{1- 1/\gamma} + A_{n}^{1- 1/\gamma} )^{\gamma} ( A_{n+1}^{1/\gamma} - A_{n}^{1/\gamma} )^{\gamma}
\leq 2^{\gamma} A_{n+1}^{\gamma - 1} ( A_{n+1}^{1/\gamma} - A_{n}^{1/\gamma} )^{\gamma}.
\end{equation*}
Now, we have
\begin{equation*}
 A_{n+1}^{1/\gamma} - A_{n}^{1/\gamma} \geq \frac{C^{1/\gamma}}{2}  A_n^{1/\gamma} \quad 
 \Leftrightarrow \quad A_{n+1} \geq \left( 1 + \frac{C^{1/\gamma}}{2} \right)^{\gamma} A_n.
 \end{equation*}
We get the desired result by applying the above inequality recursively.
\end{proof}

The next lemma is useful when we prove sublinear convergence rate of some fast gradient methods.
We note that the proof of Lemma~\ref{Lem:recur_sublinear} closely follows~\cite[Lemma~1]{HLL:2007}.

\begin{lemma}
\label{Lem:recur_sublinear}
Let $\{ A_n \}_{n \geq 1}$ be an increasing sequence of positive real numbers that satisfies
\begin{equation*}
A_{n+1} - A_n \geq C A_n^{\gamma}, \quad n \geq 1
\end{equation*}
for some $0 \leq \gamma < 1$ and $C > 0$.
Then we have
\begin{equation*}
A_n \geq \min \left\{ A_1, \left( \frac{C}{2 ( 2^{\frac{1}{1-\gamma}} - 1 )} \right)^{\frac{1}{1-\gamma}} \right\} n^{\frac{1}{1-\gamma}}, \quad n \geq 1.
\end{equation*}
\end{lemma}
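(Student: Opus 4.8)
The plan is to prove by induction on $n$ that $A_n \geq c\,n^{1/(1-\gamma)}$ for all $n \geq 1$, where $c = \min\{A_1, (C/(2(2^{1/(1-\gamma)}-1)))^{1/(1-\gamma)}\}$ is precisely the constant on the right-hand side of the statement. It is convenient to abbreviate $\alpha = 1/(1-\gamma)$, so that $\alpha \geq 1$, $\gamma\alpha = \alpha-1$, and $\gamma - 1 = -1/\alpha$; the goal then reads $A_n \geq c\,n^\alpha$ with $c = \min\{A_1, (C/(2(2^\alpha-1)))^\alpha\}$. The base case $n=1$ is immediate since $c \leq A_1$ by definition of $c$.

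For the inductive step I would first record that the map $\psi(t) = t + C t^\gamma$ is strictly increasing on $(0,\infty)$, because $\psi'(t) = 1 + C\gamma\,t^{\gamma-1} > 0$ for $\gamma \geq 0$ and $C>0$. Since $A_{n+1} \geq \psi(A_n)$ and, by the inductive hypothesis, $A_n \geq c\,n^\alpha > 0$, monotonicity of $\psi$ gives $A_{n+1} \geq \psi(c\,n^\alpha) = c\,n^\alpha + C c^\gamma n^{\gamma\alpha} = c\,n^\alpha + C c^\gamma n^{\alpha-1}$. It therefore suffices to establish the single inequality $C c^\gamma n^{\alpha-1} \geq c\bigl((n+1)^\alpha - n^\alpha\bigr)$, which is exactly what is needed to upgrade the bound to $A_{n+1} \geq c(n+1)^\alpha$.

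The main technical ingredient, and the step I expect to be the crux, is the elementary estimate $(n+1)^\alpha - n^\alpha \leq (2^\alpha - 1)\,n^{\alpha-1}$, valid for every $n \geq 1$ and $\alpha \geq 1$. I would prove it by writing the left side as $n^\alpha\bigl((1+1/n)^\alpha - 1\bigr)$ and observing that $t \mapsto \bigl((1+t)^\alpha - 1\bigr)/t$ is nondecreasing on $(0,1]$, being the secant slope from the origin of the convex function $t \mapsto (1+t)^\alpha - 1$ (which vanishes at $t=0$); hence over $t = 1/n$ with $n \geq 1$ it is maximized at $n=1$, where it equals $2^\alpha - 1$. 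I emphasize that a cruder mean-value bound of the shape $\alpha\,2^{\alpha-1}n^{\alpha-1}$ is \emph{not} sharp enough to close the induction for large $\alpha$ (for instance $\alpha\,2^{\alpha-1} > 2(2^\alpha-1)$ once $\alpha \geq 4$), so the convexity/secant argument is genuinely needed.

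Finally I would assemble the pieces. From $c \leq (C/(2(2^\alpha-1)))^\alpha$ one gets $c^{1/\alpha} \leq C/(2(2^\alpha-1))$, that is $C c^{\gamma-1} = C c^{-1/\alpha} \geq 2(2^\alpha-1) \geq 2^\alpha - 1$. Multiplying through by $c\,n^{\alpha-1}$ yields $C c^\gamma n^{\alpha-1} \geq (2^\alpha-1)\,c\,n^{\alpha-1} \geq c\bigl((n+1)^\alpha - n^\alpha\bigr)$, using the key inequality from the previous paragraph. This is precisely the bound required to conclude $A_{n+1} \geq c(n+1)^\alpha$, which completes the induction and hence the proof. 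The generous factor $2$ inside $2(2^\alpha-1)$ leaves slack, so the argument is uniform in $\gamma$, covering the boundary case $\gamma=0$ (where $\alpha=1$) with no separate treatment.
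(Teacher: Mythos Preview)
Your proof is correct. Both your argument and the paper's use induction together with the convexity of $t\mapsto t^\alpha$ (equivalently, the monotonicity of the secant slope $t\mapsto((1+t)^\alpha-1)/t$), but the executions differ. The paper substitutes $A_n=B_n n^{\alpha}$ and first weakens the recurrence to $A_{n+1}/A_n\geq 1+CA_{n+1}^{-(1-\gamma)}$; this yields an \emph{implicit} inequality in $B_{n+1}$, and the induction proceeds by a dichotomy: either $B_{n+1}$ already exceeds the threshold $(C/(2(2^{\alpha}-1)))^{\alpha}$, or it does not, in which case one shows $B_{n+1}\geq B_n$. You instead keep the recurrence in the explicit form $A_{n+1}\geq\psi(A_n)$ with $\psi$ increasing, apply the inductive hypothesis directly to get $A_{n+1}\geq c\,n^{\alpha}+Cc^{\gamma}n^{\alpha-1}$, and then close with the clean estimate $(n+1)^{\alpha}-n^{\alpha}\leq(2^{\alpha}-1)n^{\alpha-1}$. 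Your route is slightly more transparent in that it avoids the implicit/dichotomy step, while the paper's substitution $B_n=A_n/n^{\alpha}$ makes the role of the target growth rate more visible from the outset; both arrive at the same constant for the same reason.
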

\begin{proof}
Take any $n \geq 1$.
Since $A_{n+1} \geq A_n$, we get
\begin{equation*}
A_{n+1} - A_n \geq CA_n^{\gamma} \geq C A_n A_{n+1}^{-(1-\gamma)},
\end{equation*}
or equivalently,
\begin{equation*}
\frac{A_{n+1}}{A_n} \geq 1 + C A_{n+1}^{-(1-\gamma)}.
\end{equation*}
Writing $A_n = B_n n^{\frac{1}{1- \gamma}}$, we have
\begin{equation}
\label{recur_sublinear1}
\frac{B_{n+1}}{B_n} \geq \left( \frac{n}{n+1} \right)^{\frac{1}{1-\gamma}} \left( 1 + \frac{C B_{n+1}^{-(1 - \gamma)}}{n+1} \right).
\end{equation}
The right-hand side of~\eqref{recur_sublinear1} is greater than or equal to~1 if and only if
\begin{equation}
\label{recur_sublinear2}
B_{n+1} \leq \left[\frac{n+1}{C} \left( \left( 1+ \frac{1}{n} \right)^{\frac{1}{1-\gamma}} - 1 \right) \right]^{-\frac{1}{1-\gamma}}
\end{equation}
From the fact that the right-hand side of~\eqref{recur_sublinear2} increases as $n$ increases, we deduce that a sufficient condition to satisfy $B_{n+1} \geq B_n$ is that
\begin{equation*}
B_{n+1} \leq \left( \frac{C}{2( 2^{\frac{1}{1-\gamma}} - 1 )} \right)^{\frac{1}{1-\gamma}}.
\end{equation*}
Then it is straightforward by mathematical induction that
\begin{equation*}
B_{n} \geq \min \left\{ B_1, \left( \frac{C}{2( 2^{\frac{1}{1-\gamma}} - 1 )} \right)^{\frac{1}{1-\gamma}} \right\}, \quad n \geq 1,
\end{equation*}
which implies the desired result.
\end{proof}

\section{Numerical verification of Claim~\ref{Claim:uniform_opt}}
\label{App:Claim}
This appendix is devoted to discussions on Claim~\ref{Claim:uniform_opt}.
We prove a special case of Claim~\ref{Claim:uniform_opt} and then present numerical evidences for the other cases.
First, we consider the situation when $p = 2$, i.e., the function $f$ in~\eqref{model} is strongly convex.

\begin{proposition}
\label{Prop:special}
Claim~\ref{Claim:uniform_opt} holds when $p=2$.
\end{proposition}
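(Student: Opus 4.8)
The plan is to show that, when $p=2$, \cref{Alg:uniform}-\cref{e_uniform_opt} generates exactly the same scalar sequences $\{a_n\}$, $\{A_n\}$ and iterates $\{x_n\}$ as \cref{Alg:strong}-\cref{e_strong_opt}, so that the sharp lower bounds for $A_n$ already established in \cref{Thm:strong_opt} are available, and then to absorb the extra $\bdelta_n$ term of \cref{Thm:uniform} into the very same summation argument.

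First I would exploit the convention $0^0=1$. When $p=2$ we have $\delta_{j-1}^{\frac{p-2}{p}} = \delta_{j-1}^0 = 1$ and $\mu^{\frac{2}{p}} = \mu$, so the momentum equation~\cref{A_n_uniform} becomes
\begin{equation*}
\frac{a_{n+1}^2}{A_n + a_{n+1}} = \frac{1 + \mu \sum_{j=1}^n a_j}{\hat{L}_{n+1}} = \frac{1 + \mu A_n}{\hat{L}_{n+1}},
\end{equation*}
which is precisely~\cref{A_n_strong}; in particular the update of $a_{n+1}$ is entirely independent of $\{\delta_n\}$. Moreover, setting $p=2$ in~\cref{e_uniform_opt} gives $\frac{2(p-q)}{p(3q-2)} = \frac{2-q}{3q-2}$, so $\epsilon_n$ coincides with the tolerance~\cref{e_strong_opt} with $C = C_{\epsilon}$. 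Since the backtracking test is identical once $\epsilon_n$ and $a_{n+1}$ agree, \cref{Lem:backtracking_strong_opt} applies verbatim and yields $L_{n+1} \leq 2L^{\frac{2}{q}}(\theta_n \epsilon_n)^{-\frac{2-q}{q}}$. Hence the whole recursion for $\{A_n\}$ matches that of \cref{Alg:strong}-\cref{e_strong_opt}, and the lower bounds in \cref{Thm:strong_opt} hold without change.

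Next I would invoke \cref{Thm:uniform}, which bounds the energy error by $\frac{\|x_0-x^*\|^2}{2A_n} + \frac{\bepsilon_n + \bdelta_n}{2}$; the only structural novelty relative to the strongly convex analysis is the term $\bdelta_n$. Because $\epsilon_{j-1}$ and $\delta_{j-1}$ in~\cref{e_uniform_opt} share the same denominator and $A_{j-1}+a_j = A_j$, we have $a_j \epsilon_{j-1} = C_{\epsilon} A_j^{-\frac{2-q}{3q-2}}$ and $a_j \delta_{j-1} = C_{\delta} A_j^{-\frac{2-q}{3q-2}}$, so that
\begin{equation*}
\bepsilon_n + \bdelta_n = \frac{C_{\epsilon} + C_{\delta}}{A_n} \sum_{j=1}^n \frac{1}{A_j^{\frac{2-q}{3q-2}}}.
\end{equation*}
This is exactly the sum estimated in the proof of \cref{Thm:strong_opt}, with $C$ replaced by $C_{\epsilon}+C_{\delta}$, so the two regimes follow along the same lines: for $q<2$ one substitutes $A_j \geq \widetilde{C} j^{\frac{3q-2}{2-q}}$ and uses $\sum_{j=1}^n j^{-1} \leq 1 + \log n$ to obtain the $O(n^{-\frac{3q-2}{2-q}}\log n)$ rate, which is the claimed $O(n^{-\frac{p(3q-2)}{2(p-q)}}\log n)$ since $p=2$; for $q=2$ the geometric lower bound on $A_n$ together with $\sum_{j=1}^n A_j^0 = n$ gives the $O\bigl(n(1+2^{-\frac{3}{2}}\kappa^{-\frac{1}{2}})^{-2n}\bigr)$ rate.

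The argument is essentially bookkeeping, so I anticipate no genuine obstacle; the single point requiring care is confirming that the $0^0=1$ convention renders the momentum recursion completely insensitive to $\{\delta_n\}$, so that the nonzero $\delta_n$ prescribed by~\cref{e_uniform_opt} influences the analysis only through the benign additive term $\bdelta_n$ and never through $A_n$.
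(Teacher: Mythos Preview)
Your argument is correct and follows the same route as the paper's proof, which simply observes that for $p=2$ the recursion~\cref{A_n_uniform} and the tolerance~\cref{e_uniform_opt} collapse to~\cref{A_n_strong} and~\cref{e_strong_opt}, whence the conclusion is ``by the same argument as \cref{Thm:strong_opt}.'' Your version is just more explicit: you spell out why the $0^0=1$ convention makes the $A_n$-dynamics independent of $\{\delta_n\}$ and you track the extra $\bdelta_n$ term from \cref{Thm:uniform} through the same harmonic-sum estimate with $C$ replaced by $C_{\epsilon}+C_{\delta}$, which the paper leaves to the reader.
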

\begin{proof}
If $p = 2$, then~\eqref{A_n_uniform} and~\eqref{e_uniform_opt} reduces to~\eqref{A_n_strong} and~\eqref{e_strong_opt}, respectively.
Therefore, the desired result can be obtained by the same argument as Theorem~\ref{Thm:strong_opt}.
\end{proof}

Proposition~\ref{Prop:special} means that Claim~\ref{Claim:uniform_opt} is indeed a generalization of Theorem~\ref{Thm:strong_opt} to the case $p \geq 2$.
In the remainder of this appendix, we assume that $p >2 \geq q \geq 1$.
We show that the following claim is a sufficient condition to ensure Claim~\ref{Claim:uniform_opt}.

\begin{claim}
\label{Claim:recur}
Let $\{ A_n \}_{n \geq 1}$ be an increasing sequence of positive real numbers that satisfies
\begin{equation}
\label{recur_complex}
(A_{n+1} - A_n)^2 \geq C A_{n}^{1 - \frac{2-q}{q} \left( 1 + \frac{2(p-q)}{p(3q-2)} \right)} \sum_{j=1}^n \frac{(A_j - A_{j-1})^\frac{2}{p}}{A_j^{\frac{p-2}{p} \frac{2(p-q)}{p(3q-2)}}}, \quad n \geq 1
\end{equation}
for some $p > 2 \geq q \geq 1$ and $C > 0$.
Then we have
\begin{equation*}
A_n \geq \widetilde{C} n^{\frac{p(3q-2)}{2(p-q)}}, \quad n \geq 1,
\end{equation*}
where $\widetilde{C}$ is a positive constant depending on $p$, $q$, $C$, and $A_1$ only.
\end{claim}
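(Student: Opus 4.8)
The plan is to prove the power law $A_n \ge \widetilde{C}n^{\alpha}$ with $\alpha := \frac{p(3q-2)}{2(p-q)}$ by strong induction on $n$, after first pinning down $\alpha$ by a scaling balance. Write $\beta := \frac{2(p-q)}{p(3q-2)}$, so that $\alpha\beta = 1$; let $c := \frac{p-2}{p}\beta$ be the denominator exponent and $\rho := 1 - \frac{2-q}{q}(1+\beta)$ the prefactor exponent, so that \cref{recur_complex} reads $(A_{n+1}-A_n)^2 \ge C A_n^{\rho} S_n$ with $S_n := \sum_{j=1}^n (A_j - A_{j-1})^{2/p} A_j^{-c}$ and $A_0 = 0$. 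Inserting the ansatz $A_n \asymp n^{\alpha}$, treating increments as $A_j - A_{j-1} \asymp \alpha j^{\alpha - 1}$, gives $S_n \asymp n^{2\alpha/p}$ and $(A_{n+1}-A_n)^2 \asymp n^{2\alpha - 2}$; a direct computation confirms the exact balance $2\alpha - 2 = \alpha\rho + \tfrac{2\alpha}{p}$, so $\alpha$ is forced. The base case follows by taking $\widetilde{C} \le A_1$, and the inductive step reduces entirely to producing a sharp lower bound $S_n \ge c_0 n^{2\alpha/p}$ and feeding it through \cref{Lem:recur_sublinear}.

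The naive, ``Markovian'' attempt is to bound $S_n$ using only monotonicity and concavity. Since $A_j \le A_n$ and $t \mapsto t^{2/p}$ is concave with $t^{2/p}\big|_{t=0}=0$ (hence superadditive, as $2/p < 1$ for $p > 2$), one obtains $\sum_{j} (A_j - A_{j-1})^{2/p} \ge \big(\sum_{j}(A_j - A_{j-1})\big)^{2/p} = A_n^{2/p}$ and therefore $S_n \ge A_n^{2/p - c}$. Substituting into \cref{recur_complex} gives $A_{n+1} - A_n \ge \mathrm{const}\cdot A_n^{\gamma}$, to which \cref{Lem:recur_sublinear} applies; but the resulting exponent is deficient. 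Indeed $2\gamma = \rho + \tfrac{2 - (p-2)\beta}{p} = 2(1-\beta) - \tfrac{(p-2)\beta}{p}$, so $\gamma = (1-\beta) - \tfrac{(p-2)\beta}{2p} < 1-\beta$, and \cref{Lem:recur_sublinear} yields only $n^{1/(1-\gamma)}$, strictly weaker than $n^{1/\beta} = n^{\alpha}$. The deficit $\tfrac{(p-2)\beta}{2p}$ vanishes precisely when $p = 2$, which recovers \cref{Prop:special}, but is genuinely lost for $p > 2$.

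To repair this, one must exploit that the early terms satisfy $A_j \ll A_n$, so their small denominators make them much larger than the superadditive estimate detects. The route is a dyadic decomposition: group the indices according to $A_j \in [2^{-k-1}A_n, 2^{-k}A_n]$, and within each block use the inductive profile $A_j \asymp j^{\alpha}$ to \emph{count} the $\asymp (2^{-k}A_n)^{\beta}$ indices present and estimate each increment, instead of collapsing the block via superadditivity; summing the blocks then recovers $S_n \gtrsim n^{2\alpha/p}$. This step forces a matching \emph{upper} bound $A_j \le \overline{C} j^{\alpha}$ — both to count indices per block and because $\rho$ may be negative (for large $p$), so that the prefactor $A_n^{\rho}$ must also be controlled from above. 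Hence the induction must be carried out two-sidedly, $\widetilde{C}n^{\alpha} \le A_n \le \overline{C}n^{\alpha}$, equivalently showing the normalized sequence $B_n := A_n/n^{\alpha}$ is trapped in a fixed positive interval.

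The hard part will be closing this two-sided induction uniformly over the whole range $p > 2 \ge q \ge 1$. The recurrence \cref{recur_complex} is only a one-sided (lower-bound) inequality, so it does not by itself force \emph{any} upper bound on $A_n$, yet the sharp estimate of both $S_n$ and the prefactor $A_n^{\rho}$ requires one; moreover, because the increments enter to the fractional power $2/p$, the sum $S_n$ is sensitive to how the increments are distributed and not merely to the endpoint $A_n$, so a clean monotone induction on $B_n$ does not obviously self-propagate. Making all constants close is exactly the gap that the paper fills by numerical verification; a fully rigorous argument appears to need an extra structural input — an a priori two-sided bound on $A_n$, or eventual monotonicity of $B_n$ — beyond what \cref{Lem:recur_linear,Lem:recur_sublinear} supply.
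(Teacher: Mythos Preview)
Your assessment is accurate, and in fact you have already identified the key point: the paper does \emph{not} give an analytical proof of \cref{Claim:recur}. It is deliberately labeled a ``Claim'' rather than a Lemma or Theorem; the paper states explicitly that ``we were not able to complete a rigorous convergence analysis in this case,'' proves only the special case $p=2$ (\cref{Prop:special}), shows that \cref{Claim:recur} would imply the desired convergence (\cref{Prop:other}), and then verifies \cref{Claim:recur} purely numerically by plotting $A_n$ generated from the recurrence with equality against $n^{\frac{p(3q-2)}{2(p-q)}}$ for a grid of $(p,q)$ values.

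Your proposal therefore goes substantially further than the paper. Your scaling analysis correctly identifies $\alpha = \frac{p(3q-2)}{2(p-q)}$ as the forced exponent, and your diagnosis of why the na\"ive superadditive bound on $S_n$ loses exactly a factor of $\frac{(p-2)\beta}{2p}$ in the exponent --- recovering the paper's rigorous result precisely when $p=2$ --- is correct and illuminating. The dyadic decomposition you sketch is a natural route, and you correctly isolate the genuine obstruction: the recurrence is a one-sided lower bound, yet a sharp estimate of $S_n$ (and of $A_n^{\rho}$ when $\rho < 0$) requires matching upper control on $A_j$, which the hypothesis simply does not provide. Your closing paragraph names this gap precisely and matches the paper's own admission that numerical verification stands in for the missing structural input. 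There is nothing to correct here; you have understood both what a proof would require and why the paper does not supply one.
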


\begin{remark}
\label{Rem:recur}
Replacing $A_n$, $A_{n+1} - A_n$, and $\sum_{j=1}^n \cdot$ in~\eqref{recur_complex} by $y(t)$, $y'(t)$, and $\int_0^{t} \cdot \,ds$, respectively, one can obtain the ordinary differential equation
\begin{equation}
\label{ODE}
y'(t)^2 = C y(t)^{1 - \frac{2-q}{q} \left( 1 + \frac{2(p-q)}{p(3q-2)} \right)} \int_0^t \frac{y'(s)^{\frac{2}{p}}}{y(s)^{\frac{p-2}{p} \frac{2(p-q)}{p(3q-2)}}} \,ds,
\end{equation}
which is a continuous analogue of~\eqref{recur_complex}.
If we impose the initial condition $y(0) = 0$ to~\eqref{ODE}, then we can readily verify that~\eqref{ODE} admits a solution
\begin{equation*}
y(t) = \widehat{C} t^{\frac{p(3q-2)}{2(p-q)}}, \quad t \geq 0,
\end{equation*}
where $\widehat{C}$ is an appropriate constant depending on $p$, $q$, and $C$.
That is, the solution of~\eqref{ODE} has the same growth rate as the conclusion of Claim~\ref{Claim:recur}.
\end{remark}

Even though Claim~\ref{Claim:recur} has the rather complex structure, it is in fact a generalization of~\eqref{Lem:recur_sublinear}.
If we set $p=2$ and $1 \leq q < 2$ in~\eqref{recur_complex}, then we get
\begin{equation*}
(A_{n+1} - A_n) \geq C^{\frac{1}{2}} A_n^{\frac{4q-4}{3q-2}}, \quad n \geq 1,
\end{equation*}
which has the same form as~\eqref{Lem:recur_sublinear}~(note that $0 \leq \frac{4q-4}{3q-2} < 1$ if $1\leq q < 2$).

\begin{proposition}
\label{Prop:other}
Assume that $p > 2 \geq q \geq 1$.
Claim~\ref{Claim:recur} implies Claim~\ref{Claim:uniform_opt}.
\end{proposition}
\begin{proof}
The starting point of the proof is~\eqref{A_n_uniform}; equations~\eqref{L_uniform_const} and~\eqref{e_uniform_opt} imply that
\begin{equation*} \begin{split}
(A_{n+1} - A_n)^2 &\geq \frac{A_{n+1} \sum_{j=1}^n \delta_{j-1}^{\frac{p-2}{p}} \mu^{\frac{2}{p}} (A_{j} - A_{j-1}) }{L_{n+1}} \\
&\geq \frac{C_{\delta}^{\frac{p-2}{p}}C_{\epsilon}^{\frac{2-q}{q}}}{2 \kappa} A_{n+1}^{1 - \frac{2-q}{q} \left( 1 + \frac{2(p-q)}{p(3q-2)} \right)} \sum_{j=1}^n \frac{(A_j - A_{j-1})^\frac{2}{p}}{A_j^{\frac{p-2}{p} \frac{2(p-q)}{p(3q-2)}}} \\
&\geq \frac{C_{\delta}^{\frac{p-2}{p}}C_{\epsilon}^{\frac{2-q}{q}}}{2 \kappa} A_{n}^{1 - \frac{2-q}{q} \left( 1 + \frac{2(p-q)}{p(3q-2)} \right)} \sum_{j=1}^n \frac{(A_j - A_{j-1})^\frac{2}{p}}{A_j^{\frac{p-2}{p} \frac{2(p-q)}{p(3q-2)}}} ,
\end{split} \end{equation*}
where $\kappa$ was defined in~\eqref{kappa}.
Similarly to~\eqref{Thm:strong_opt}, one can verify that $A_1$ has a lower bound depending on $p$, $q$, and $L$ only.
Hence, if we assume that Claim~\ref{Claim:recur} holds, then we can conclude that
$
A_n \geq \widetilde{C}n^{\frac{p(3q-2)}{2(p-q)}},
$
where $\widetilde{C}$ is a positive constant depending on $p$, $q$, $L$, $\mu$, $C_{\epsilon}$, and $C_{\delta}$.
It is straightforward to prove that
\begin{equation*}
F(x_n) - F(x^*) \leq \frac{1}{2 \widetilde{C}n^{\frac{p(3q-2)}{2(p-q)}}} \left( \|x_0 - x^* \|^2 + \frac{C}{\widetilde{C}^{\frac{2(p-q)}{p(3q-2)}}} \left( 1 + \log n \right) \right)
\end{equation*}
by invoking Theorem~\ref{Thm:uniform} and using the same argument as Theorem~\ref{Thm:strong_opt}.
\end{proof}

\begin{figure}[]
\centering
\subfloat[][$p=10$, $q=1$]{ \includegraphics[width=0.31\linewidth]{./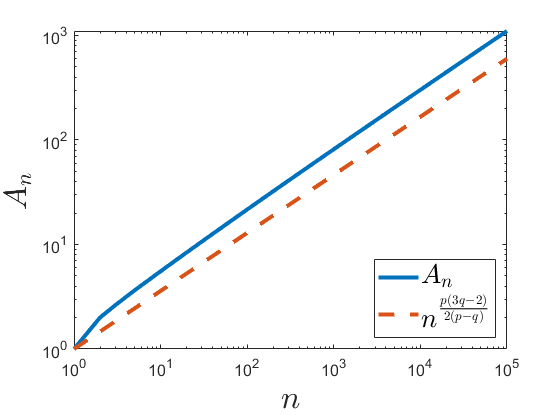} }
\subfloat[][$p=10$, $q=1.5$]{ \includegraphics[width=0.31\linewidth]{./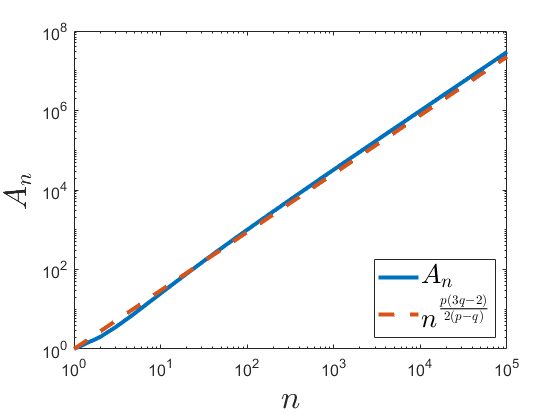} }
\subfloat[][$p=10$, $q=2$]{ \includegraphics[width=0.31\linewidth]{./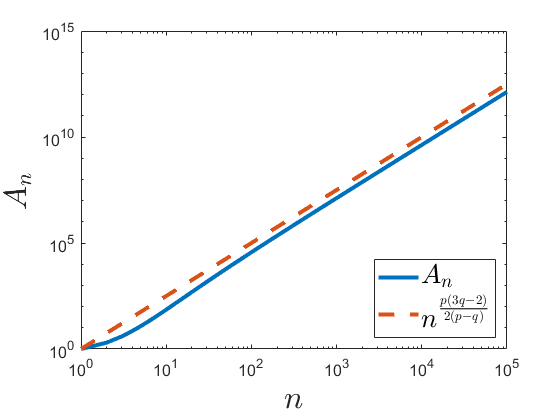} } \\

\subfloat[][$p=10^2$, $q=1$]{ \includegraphics[width=0.31\linewidth]{./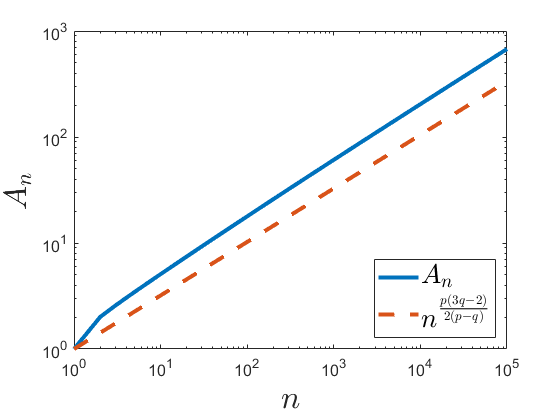} }
\subfloat[][$p=10^2$, $q=1.5$]{ \includegraphics[width=0.31\linewidth]{./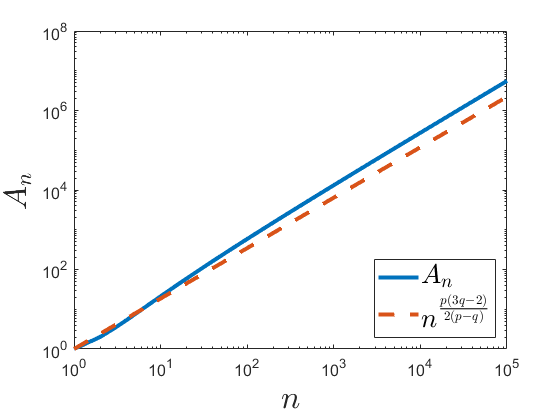} }
\subfloat[][$p=10^2$, $q=2$]{ \includegraphics[width=0.31\linewidth]{./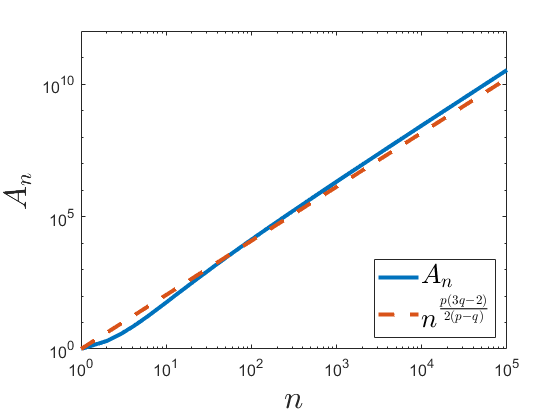} } \\

\subfloat[][$p=10^3$, $q=1$]{ \includegraphics[width=0.31\linewidth]{./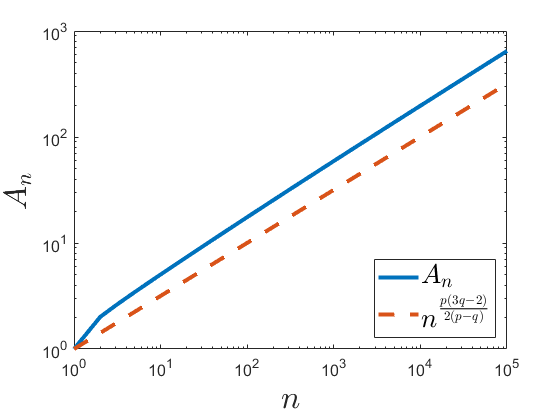} }
\subfloat[][$p=10^3$, $q=1.5$]{ \includegraphics[width=0.31\linewidth]{./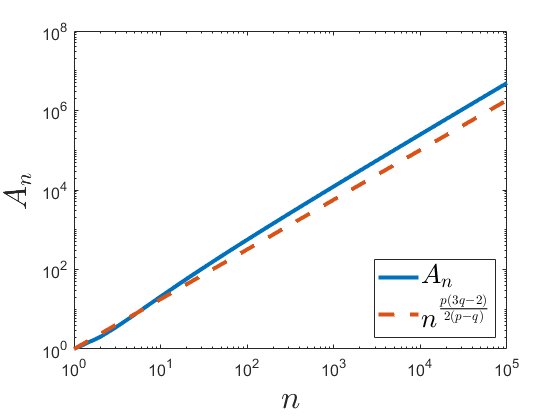} }
\subfloat[][$p=10^3$, $q=2$]{ \includegraphics[width=0.31\linewidth]{./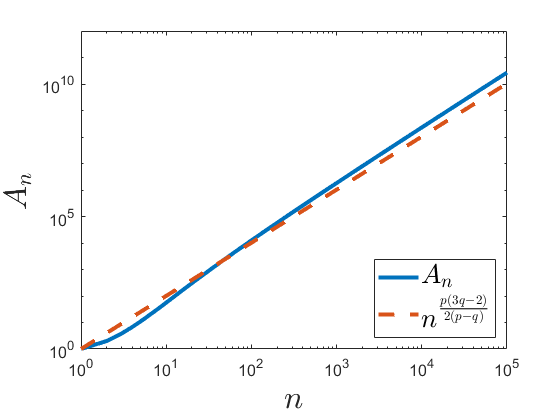} } \\
\caption{Growth of $A_n$ generated by the recurrence relation~\eqref{A_n_exp} and $n^{\frac{p(3q-2)}{2(p-q)}}$ for various $p$ and $q$ such that $p > 2 \geq q \geq 1$.}
\label{Fig:A_n}
\end{figure}

We verify Claim~\ref{Claim:recur} by numerical experiments.
Figure~\ref{Fig:A_n} plots $A_n$ and $n^{\frac{p(3q-2)}{2(p-q)}}$ with respect to $n$ in log-log scale, where $A_n$ is generated by the recurrence relation
\begin{equation} 
\label{A_n_exp}
A_1 =1, \quad
(A_{n+1} - A_n)^2 = A_{n}^{1 - \frac{2-q}{q} \left( 1 + \frac{2(p-q)}{p(3q-2)} \right)} \sum_{j=1}^n \frac{(A_j - A_{j-1})^\frac{2}{p}}{A_j^{\frac{p-2}{p} \frac{2(p-q)}{p(3q-2)}}}, \gap n \geq 1
\end{equation}
for various choices of $p$ and $q$.
In all cases, the slope of the graph for $A_n$ is a bit greater than that of the graph for $n^{\frac{p(3q-2)}{2(p-q)}}$.
That is, the asymptotic growth rate of $A_n$ is observed to be greater than $n^{\frac{p(3q-2)}{2(p-q)}}$, which implies Claim~\ref{Claim:recur}.
Thanks to Proposition~\ref{Prop:other}, the validity of Claim~\ref{Claim:uniform_opt} is ensured by the above numerical results.

\end{appendices}

\bibliography{refs_FGM_uniform}

\end{document}